\documentclass{elsarticle}
\usepackage{tikz}
\usepackage{url}
\usepackage{xcolor}
\usepackage{enumerate}
\usepackage{amssymb}
\usepackage{amsthm}
\usepackage{amsmath}

\renewcommand{\supset}{\supseteq}

\newcommand{\qvert}[1]{\draw (#1) circle (5pt)}
\tikzset{xedge/.style={line width=2.5pt}}
\tikzset{medge/.style={line width=2.5pt}}
\tikzset{triangle1/.style={fill=lightgray}}
\pgfdeclarelayer{edges}
\pgfdeclarelayer{triangles}
\pgfdeclarelayer{toplayer}
\pgfdeclarelayer{invishack}
\pgfsetlayers{invishack,triangles,edges,main,toplayer}
\newenvironment{edges}{\begin{pgfonlayer}{edges}}{\end{pgfonlayer}}
\newenvironment{tris}{\begin{pgfonlayer}{triangles}}{\end{pgfonlayer}}
\newenvironment{ontop}{\begin{pgfonlayer}{toplayer}}{\end{pgfonlayer}}
\newenvironment{invis}{\begin{pgfonlayer}{invishack}}{\end{pgfonlayer}}
\newcommand{\comp}[1]{\overline{#1}}
\newcommand{\robG}{Let $G$ be a robust graph}
\newcommand{\robrem}{Let $G$ be a robust graph with no reducible set}
\newcommand{\rlpart}[1]{part (\ref{rl:#1}) of Lemma~\ref{lem:redlem}}
\newcommand{\st}{\colon\,}
\tikzstyle{vertex}=[inner sep = 0pt, minimum width=4pt, fill=black, shape=circle]
\tikzstyle{squarevert}=[inner sep = 0pt, minimum width=4pt, minimum height=4pt, fill=white, shape=rectangle, draw=black, thick]
\tikzstyle{trivert}=[inner sep = 0pt, minimum width=6pt, minimum height=6pt, fill=gray!50!white, shape=regular polygon,regular polygon sides=3, draw=black, thick]
\tikzstyle{pentvert}=[inner sep = 0pt, minimum width=6pt, minimum height=6pt, fill=gray!15!white, shape=regular polygon,regular polygon sides=5, draw=black, thick]

\newcommand{\illSX}[2]{Now $#1$ is reducible using the following sets $\sey$ and $X$, illustrated in #2:}
\newcommand{\tillSX}[2]{Now $#1$ is reducible using the following sets $\sey$ and $X$, with $\sey$ illustrated in #2:}
\newcommand{\iso}{\cong}

\newcommand{\wke}{\mathsf{WKE}}
\newcommand{\ke}{\mathsf{KE}}
\newcommand{\iswke}[1]{$#1 \in \wke$}
\newcommand{\iske}[1]{$#1 \in \ke$}

\newcommand{\caze}[2]{\textbf{Case {#1}:} \textit{#2}}
\newcommand{\gpoint}[2]{\node[style=vertex, label=#1:$#2$]}
\newcommand{\vpoint}[2]{\node[style=squarevert, label=#1:$#2$]}
\newcommand{\bpoint}[1]{\gpoint{below}{#1}}
\newcommand{\apoint}[1]{\gpoint{above}{#1}}
\newcommand{\lpoint}[1]{\gpoint{left}{#1}}
\newcommand{\rpoint}[1]{\gpoint{right}{#1}}
\newcommand{\bvoint}[1]{\vpoint{below}{#1}}
\newcommand{\avoint}[1]{\vpoint{above}{#1}}
\newcommand{\lvoint}[1]{\vpoint{left}{#1}}
\newcommand{\rvoint}[1]{\vpoint{right}{#1}}

\renewcommand{\subset}{\subseteq}

\newcommand{\join}{\vee}

\DeclareMathOperator{\mad}{Mad}

\newtheorem{proposition}{Proposition}[section]
\newtheorem{lemma}[proposition]{Lemma}
\newtheorem{theorem}[proposition]{Theorem}
\newtheorem{corollary}[proposition]{Corollary}
\newtheorem*{robustrecap}{Lemma~\ref{lem:robust}}
\newtheorem*{fakemain}{Theorem~\ref{thm:mainthm}}
\newtheorem{conjecture}[proposition]{Conjecture}
\theoremstyle{definition}
\newtheorem{definition}[proposition]{Definition}

\newcommand{\sizeof}[1]{\left\lvert{#1}\right\rvert}
\newcommand{\floor}[1]{\left\lfloor{#1}\right\rfloor}

\newcommand{\tee}{\mathcal{T}}
\newcommand{\sey}{\mathcal{S}}

\title{Tuza's Conjecture for
 Graphs with Maximum Average Degree less than $7$}
\author{Gregory J.~Puleo}
\ead{puleo@illinois.edu}
\address{Department of Mathematics\\University of Illinois at Urbana-Champaign\\1409 W.~Green St, Urbana, IL 61801}
\address{Now at: Coordinated Science Lab\\University of Illinois at Urbana-Champaign\\1308 W.~Main Street, Urbana, IL 61801}
\date{\today}
\begin{document}
\begin{abstract} Tuza's~Conjecture states that if a graph $G$ does not
  contain more than $k$ edge-disjoint triangles, then some set of at
  most $2k$ edges meets all triangles of $G$. We prove
  Tuza's~Conjecture for all graphs $G$ having no subgraph with average
  degree at least $7$.  As a key tool in the proof, we introduce a
  notion of \emph{reducible sets} for Tuza's Conjecture; these are
  substructures which cannot occur in a minimal counterexample to
  Tuza's~Conjecture. We also introduce \emph{weak K\"onig--Egerv\'ary
    graphs}, a generalization of the well-studied K\"onig--Egerv\'ary
  graphs.
\end{abstract}
\begin{keyword}
  Tuza's Conjecture \sep packing and covering \sep triangle-free subgraphs \sep discharging
\end{keyword}
\maketitle
\section{Introduction}
Suppose that we wish to make a graph $G$ triangle-free by deleting a
small number of edges. An obvious obstruction is the presence of a
large family of edge-disjoint triangles: we must delete one edge from
each such triangle. On the other hand, deleting all edges from a
maximal family of edge-disjoint triangles clearly destroys all
triangles in $G$.  Let $\nu(G)$ denote the maximum size of a set of
edge-disjoint triangles in $G$, and let $\tau(G)$ denote the minimum
size of an edge set $Y$ such that $G-Y$ is triangle-free. We have just
argued that $\nu(G) \leq \tau(G) \leq 3\nu(G)$. Clearly the lower
bound is sharp, with equality in many instances, such as when all
blocks are triangles.  The desire to obtain a sharp upper bound
motivates the following conjecture:
\begin{conjecture}[Tuza's Conjecture~\cite{TuzaProc,Tuza}]
  $\tau(G) \leq 2\nu(G)$ for all graphs $G$.
\end{conjecture}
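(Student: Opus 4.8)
The plan is to argue by contradiction, choosing a counterexample $G$ that minimizes $\sizeof{E(G)}$, breaking ties by minimizing $\nu(G)$ if needed. The first step is to extract structural leverage from minimality. Every edge must lie in a triangle, since a triangle-free edge contributes to neither $\nu$ nor $\tau$ and could be deleted to produce a smaller counterexample. Likewise $G$ can have no cut vertex and no small edge separator across which an optimal packing and an optimal cover both split additively: such a separator would let us solve the two sides independently and paste the solutions together. I would consolidate these restrictions into the working hypothesis that $G$ is \emph{robust}, and then develop a library of \emph{reducible sets}: local substructures together with a deletion set $X$ such that removing $X$ both destroys all triangles meeting the substructure and drops $\nu$ by at least $\tfrac12\sizeof{X}$, so that an optimal cover of $G - X$ extends to one of $G$ of size at most $2\nu(G)$. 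Each reducible set yields a contradiction, so a minimal counterexample must avoid them all.

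The engine for deriving a contradiction would be a \emph{discharging} argument. I assign to each vertex an initial charge of $d(v)$, fix a target average $t$, and design redistribution rules, sending charge across edges and around the triangles through a vertex, so that the forbidden reducible configurations are exactly the local patterns that would allow a vertex to finish below $t$. If every vertex ends with charge at least $t$, then $2\sizeof{E(G)} = \sum_{v} d(v) \geq t\,\sizeof{V(G)}$, forcing the average degree up to $t$; choosing the rules so that $t$ exceeds the density actually present then closes the contradiction. A parallel role is played by the \emph{weak K\"onig--Egerv\'ary} machinery: identifying when $\tau$ and $\nu$ coincide locally lets one certify that a given reduction preserves the desired $2\nu$ bound.

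The honest difficulty is that any such discharging scheme carries an intrinsic density ceiling. The argument can only certify that no counterexample is locally \emph{sparse}, yet the extremal instances for Tuza's Conjecture, namely disjoint copies of $K_4$ and $K_5$, where $\tau = 2\nu$ holds with equality, are dense. A genuine minimal counterexample would therefore have to contain arbitrarily dense cores, and no fixed finite catalogue of reducible sets paired with fixed local rules can exclude those. This is exactly where the present paper halts, at maximum average degree $7$. Closing the remaining gap would demand either a fundamentally global comparison of $\tau$ and $\nu$ on dense graphs, or a preprocessing step that strips away each dense block, controlling its contribution to both parameters directly through the weak K\"onig--Egerv\'ary theory, until only a sparse remainder survives to which the discharging argument applies. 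I expect this dense-core step, rather than the sparse discharging, to be the true barrier, and it is precisely the reason the conjecture remains open in full generality.
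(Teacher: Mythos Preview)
The statement you were asked to prove is not a theorem of the paper; it is Tuza's Conjecture itself, stated as Conjecture~1.1 and left open. The paper proves only the special case $\mad(G) < 7$ (Theorem~\ref{thm:mainthm}). There is therefore no ``paper's own proof'' to compare against.

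Your submission is not a proof either, and you say so yourself: you outline the minimal-counterexample-plus-discharging strategy, correctly identify that it carries an intrinsic density ceiling, note that the paper halts at maximum average degree $7$, and conclude that the conjecture remains open. That assessment is accurate, and your description of the machinery (robustness, reducible sets, weak K\"onig--Egerv\'ary structure, discharging with initial charge $d(v)$) matches the paper's approach closely. But as a proof of the full conjecture it has a genuine and acknowledged gap: the discharging argument cannot reach dense graphs, and you offer no concrete mechanism for the ``dense-core'' preprocessing step you sketch at the end. Stripping off dense blocks while controlling both $\tau$ and $\nu$ additively is precisely the hard part; the weak K\"onig--Egerv\'ary theory in the paper handles only small or highly structured neighborhoods, not arbitrary dense subgraphs. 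So what you have written is a fair summary of the state of the problem and of the paper's method, but it is not, and does not claim to be, a proof of Conjecture~1.1.
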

Any graph whose blocks are all isomorphic to $K_4$ achieves equality
in the upper bound, as observed by Tuza~\cite{Tuza}.

Tuza's~Conjecture has been studied by many authors. The best general
upper bound on $\tau(G)$ in terms of $\nu(G)$ is due to
Haxell~\cite{Haxell}, who showed that $\tau(G) \leq 2.87\nu(G)$ for
all graphs $G$.

Other authors have pursued the conjecture by showing that the desired
bound $\tau(G) \leq 2\nu(G)$ holds for certain special classes of
graphs.  Tuza~\cite{Tuza} showed that his conjecture holds for all
planar graphs and for all $K_6$-free chordal graphs.
Aparna~Lakshmanan, Bujt\'as, and Tuza~\cite{LBT} generalized the
result for planar graphs by showing that the conjecture holds for all
``triangle-$3$-colorable'' graphs, a class containing all
$4$-colorable graphs. Another generalization is due to
Krivelevich~\cite{Krivelevich}, who showed that Tuza's~Conjecture
holds for all graphs having no $K_{3,3}$-subdivision.  The result on
planar graphs was extended in a different direction by Haxell,
Kostochka, and Thomass\'e~\cite{SashaK4}, who proved that when $G$ is
a $K_4$-free planar graph, the stronger inequality $\tau(G) \leq
\frac{3}{2}\nu(G)$ holds.

Krivelevich~\cite{Krivelevich} proved that a version of
Tuza's~Conjecture holds when $\tau$ or $\nu$ is replaced by its
fractional relaxation $\tau^*$ or $\nu^*$, where instead of asking for
a set of edges $Y$ or a set of edge-disjoint triangles $\tee$, one
instead asks for a \emph{weight function} on the edges or the
triangles of $G$, subject to constraints on the weight function which
model the original constraints on $Y$ and $\tee$. Chapuy, DeVos,
McDonald, Mohar, and Schiede~\cite{CDMMS} improved Krivelevich's bound of
$\tau(G) \leq 2\nu^*(G)$ to the stronger bound $\tau(G) \leq 2\nu^*(G) -
\frac{1}{\sqrt{6}}\sqrt{\nu^*(G)}$, and proved that this bound is
tight. Chapuy, DeVos, McDonald, Mohar, and Schiede also extended
Tuza's result on planar graphs, as well as Haxell's result, to the
context of weighted graphs. Krivelevich's result was also extended by
Haxell, Kostochka, and Thomass\'e~\cite{SashaStability}, who proved a
stability theorem: if $\tau^*(G) \geq 2\nu^*(G) - x$, then $G$
contains a family of pairwise edge-disjoint subgraphs consisting of
$\nu(G) - \floor{10x}$ copies of $K_4$ as well as $\floor{10x}$
triangles.

Haxell and R\"odl~\cite{HaxellRodl} showed that if $G$ is an
$n$-vertex graph and $\nu^*(G)$ is the fractional relaxation of
$\nu(G)$, then $\nu^*(G) - \nu(G) = o(n^2)$. As observed by
Yuster~\cite{Yuster}, this result together with Krivelevich's result imply $\tau(G) \leq
2\nu(G) + o(n^2)$; thus, Tuza's~Conjecture is asymptotically true for
graphs containing a quadratic-sized family of edge-disjoint
triangles. Such graphs are dense; instead, we study the conjecture on
sparse graphs.

An important measure of sparseness is the \emph{maximum
  average degree} of a graph, denoted $\mad(G)$ and defined
by
\[ \mad(G) = \max\left\{ \frac{2\sizeof{E(H)}}{\sizeof{V(H)}} \st H \subset
G \right\}. \] In this paper, we apply the discharging method to
prove the following theorem:
\begin{theorem}\label{thm:mainthm}
  If $\mad(G) < 7$, then $\tau(G) \leq 2\nu(G)$.
\end{theorem}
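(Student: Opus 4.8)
The plan is a discharging argument organized around the notion of reducible sets advertised in the abstract. I would begin by taking a counterexample $G$ to the theorem that is minimal, say minimizing $\sizeof{V(G)}+\sizeof{E(G)}$; since $\mad$ is monotone under taking subgraphs, every smaller instance still satisfies the hypothesis $\mad < 7$, so minimality gives $\tau \le 2\nu$ for all strictly smaller graphs. Next I would reduce to the \emph{robust} case. Every triangle lies inside a single block, so both $\nu$ and $\tau$ are additive over the blocks of $G$, and each block $B$ satisfies $\mad(B) \le \mad(G) < 7$; hence a minimal counterexample may be assumed $2$-connected. Together with removing vertices of degree at most $2$ (which lie in at most one triangle and can be stripped off with an obvious accounting), this lets me assume $G$ is robust: $2$-connected, with a suitable minimum-degree condition, and — crucially — containing no reducible set.

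I would then develop the reducibility lemma (the analogue of \texttt{lem:redlem}). A reducible set is a local configuration carrying two witnesses: a family $\sey$ of pairwise edge-disjoint triangles and an edge set $X$ with $\sizeof{X} \le 2\sizeof{\sey}$, such that deleting the relevant edges and vertices produces a smaller graph $G'$, and such that whenever $Y'$ covers all triangles of $G'$ the set $Y' \cup X$ covers all triangles of $G$, while $\sey$ together with a maximum packing of $G'$ stays edge-disjoint. By minimality $G'$ admits a cover of size at most $2\nu(G')$, so the inequality $\sizeof{X}\le 2\sizeof{\sey}$ is exactly what propagates the factor $2$ back to $G$, contradicting that $G$ is a counterexample. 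The weak K\"onig--Egerv\'ary graphs ($\wke$) enter here as the clean base case: on a $\wke$ graph the packing and cover can be matched up exactly, so $\wke$-membership is itself a reducibility mechanism, and I would use it to dispose of several small configurations that resist the direct peeling argument.

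With the catalogue of reducible sets in hand, the proof concludes by discharging. Assign each vertex $v$ the initial charge $d(v)-7$; since $\mad(G)<7$ the total charge $2\sizeof{E(G)} - 7\sizeof{V(G)}$ is strictly negative. Vertices of degree at least $7$ begin with nonnegative charge and act as donors, while vertices of degree $3,4,5,6$ begin negative and must be replenished. I would design rules that route charge from high-degree vertices to nearby low-degree vertices through the edges and triangles they share, and then verify that in a robust graph with no reducible set every vertex finishes with nonnegative charge. This contradicts the negative total and proves the theorem.

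The main obstacle is the interface between reducibility and discharging. Because Tuza's conjecture concerns triangles rather than degrees alone, the discharging rules cannot be purely degree-based: charge must be pushed along the triangle structure, and the list of reducible configurations has to be rich enough that every local picture surviving the discharging leaves its central low-degree vertices nonnegative. Producing an exhaustive, essentially non-overlapping catalogue of reducible sets — and checking for each one both the witness inequality $\sizeof{X}\le 2\sizeof{\sey}$ and the cover-extension property — is the delicate, case-heavy core of the argument, and I expect the threshold $\mad < 7$ to emerge precisely as the bound at which such a catalogue can be made to close.
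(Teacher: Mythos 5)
Your proposal reproduces the paper's skeleton faithfully (minimal counterexample, reducible sets with witnesses $\sey$ and $X$ satisfying $\sizeof{X} \leq 2\sizeof{\sey}$, $\wke$ graphs as the reducibility engine, discharging with threshold $7$), but it has a genuine gap: everything that constitutes the actual mathematical content is deferred with phrases like ``I would design rules'' and ``producing an exhaustive catalogue \ldots is the delicate, case-heavy core.'' The catalogue and the rules are not interchangeable generic objects; the paper needs the precise statements of Lemma~\ref{lem:redlem} --- that $6^-$-vertices form an independent set, that $\Delta(\comp{G[N(v)]}) \leq 1$ for every $6^-$-vertex (so a $5$-vertex is subsumed by at least three neighbors), the \emph{subsumption} bounds for $9$- and $10^+$-vertices, and the special treatment of \emph{thin} $6$-vertices --- to make the specific transfers of $2/3$, $1/6$, and $1/4$ close at exactly $7$. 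Without subsumption, purely neighbor-based rules top out well short of $7$: the paper itself shows this by proving the weaker bound $\mad(G) < 25/4$ in Section~\ref{sec:625} with exactly the kind of rule set you describe. So there is no reason to believe an unspecified rule set ``emerges'' at threshold $7$; that threshold is the product of the subsumption analysis your sketch never touches.

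A second, more localized error: your reduction to the robust case does not work. The paper's robustness is a local condition --- every component of $G[N(v)]$ has order at least $5$ --- and it is \emph{derived} from the reducibility machinery (a small component of $G[N(v)]$ is connected on at most $4$ vertices, hence in $\wke$ by Corollary~\ref{cor:deg4}, hence yields a reducible edge set by Lemma~\ref{lem:wke}), not from global connectivity. Your route via block additivity plus stripping degree-$\leq 2$ vertices gives only $2$-connectivity and $\delta \geq 3$; it leaves degree-$3$ and degree-$4$ vertices standing, and these cannot be removed ``with an obvious accounting'' --- a degree-$3$ vertex can lie in three distinct triangles, and eliminating it already requires the $\wke$ argument. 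Since your discharging starts every vertex at $d(v)-7$, those degree-$3$ and degree-$4$ vertices would each need $4$ or $3$ units of charge, which no rule set of the paper's type can supply; the proof genuinely depends on first establishing $\delta(G) \geq 5$ the way Lemma~\ref{lem:robust} does.
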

\noindent To our knowledge, this is the first application of the
discharging method to Tuza's~Conjecture, as well as the first
verification of the conjecture for a class of graphs defined only by a
sparsity condition, rather than more complex structural conditions. As
we discuss in Section~\ref{sec:consequences},
Theorem~\ref{thm:mainthm} further generalizes the earlier results
concerning planar graphs and $K_{3,3}$-subdivision-free graphs.

In Section~\ref{sec:main} we introduce definitions and give the
discharging argument used to prove Theorem~\ref{thm:mainthm}, modulo
two lemmas whose proofs occupy most of the paper. The key definition
in Section~\ref{sec:main} is that of a \emph{reducible set}, a
particular substructure that cannot occur in a smallest counterexample
to Tuza's~Conjecture. Essentially, a reducible set represents a
``local solution'' to the optimization problem posed by
Tuza's~Conjecture.

The definition of a reducible set for Tuza's~Conjecture is perhaps the
main new idea of the paper. While we use discharging to prove the
existence of reducible sets, we hope that later work will be able to
use these reducible sets in extremal arguments which may not involve
discharging at all.


In Sections~\ref{sec:wke}--\ref{sec:subsume} we prove the two lemmas
stated in Section~\ref{sec:main}.  In Section~\ref{sec:wke} we
introduce \emph{weak K\"onig--Egerv\'ary graphs}, which we use heavily
in our reducibility proofs. In Section~\ref{sec:low} we discuss the
behavior of low-degree vertices in graphs with no reducible set.

The results in Sections~\ref{sec:main}--\ref{sec:low} are sufficient
to prove a weaker result than Theorem~\ref{thm:mainthm}. Using these
results, we can show that Tuza's~Conjecture holds for all graphs $G$
with $\mad(G) < 25/4$, a threshold which still suffices for many of
the desired applications. In Section~\ref{sec:625} we pause and give a
short proof of Tuza's~Conjecture for graphs $G$ with $\mad(G) < 25/4$.

In Section~\ref{sec:subsume} we explore the relation of
\emph{subsumption}, which plays a prominent role in the discharging
rule of Section~\ref{sec:main} and allows us to push the maximum
average degree threshold up to $7$. We again explore the behavior of
this relation in graphs with no reducible set.
\section{Definitions and Proof Summary}\label{sec:main}
When $G$ is a graph and $W \subset V(G)$, we write $G[W]$ for the
subgraph of $G$ induced by the vertices in $W$. When $V_0 \subset
V(G)$, we write $N(V_0)$ for $\bigcup_{v \in V_0}N(v)$, and when $U
\subset V(G)$, we write $N_U(V_0)$ for $N(V_0) \cap U$. Similarly,
$d_U(V_0)$ denotes $\sizeof{N_U(V_0)}$. For $v \in V(G)$, we write $N[v]$
for the closed neighborhood $N(v) \cup \{v\}$. We write $K_n^-$ to denote the
complete graph on $n$ vertices with any edge removed. When the
graph $G$ is understood and $k$ is a nonnegative integer, we say that
a vertex of $G$ is a \emph{$k$-vertex} if its degree in $G$ is exactly
$k$, a \emph{$k^+$-vertex} if its degree is at least $k$, or a
\emph{$k^-$-vertex} if its degree is at most $k$.

While Tuza's~Conjecture involves two combinatorial optimization
parameters, it can be also viewed as a single combinatorial
optimization problem: in this problem, the goal is to simultaneously
find a set $\tee$ of edge-disjoint triangles and an edge set $Y$ such
that $G-Y$ is triangle-free and such that $\sizeof{Y} \leq
2\sizeof{\tee}$.

The requirement that $G-Y$ be triangle-free is a global
requirement. We replace this global problem with a local problem:
fixing a vertex set $V_0$, we seek a set $\sey$ of edge-disjoint
triangles and an edge set $X$ such that $G-X$ has no triangle
containing a vertex of $V_0$ and such that $\sizeof{X} \leq
2\sizeof{\sey}$. The rough idea is to remove the vertex set $V_0$,
solve the ``global'' problem in the resulting subgraph, and then
combine the subgraph solution with the ``local solution'' to solve the
global problem in the original graph.  The main difficulty in combining
solutions this way is the requirement that the final set of triangles
be edge-disjoint; carelessly combining sets of triangles will violate
this requirement. The definition of a reducible set is tailored to
overcome this difficulty:
\begin{definition}\label{def:reducible}
  When $\sey$ is a set of triangles, an \emph{$\sey$-edge} is an edge
  of some triangle in $\sey$. A nonempty set $V_0 \subset V(G)$ is
  \emph{reducible} if there exist a set $\sey$ of edge-disjoint
  triangles in $G$ and set $X$ of edges in $G$ such that the following
  conditions hold:
  \begin{enumerate}[(i)]
  \item $\sizeof{X} \leq 2\sizeof{\sey}$; 
  \item $G-X$ has no triangle containing a vertex of $V_0$; and
  \item $X$ contains every $\sey$-edge whose endpoints are both outside $V_0$.
  \end{enumerate}
  When $V_0$, $\sey$, and $X$ satisfy the definition above, we say
  that $V_0$ is \emph{reducible using $\sey$ and $X$}.
\end{definition}
Note that Tuza's~Conjecture holds for $G$ if and only if the entire
vertex set $V(G)$ is reducible. However, if $G$ is a \emph{minimal}
counterexample to Tuza's~Conjecture, then $G$ has no reducible set of
any size:
\begin{lemma}\label{lem:reducible-wins}
  Let $G$ be a graph, and let $V_0 \subset V(G)$ be reducible using
  $\sey$ and $X$. Let $G' = (G-X) - V_0$. If $\tau(G') \leq 2\nu(G')$,
  then $\tau(G) \leq 2\nu(G)$.
\end{lemma}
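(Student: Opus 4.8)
The plan is to combine an optimal packing-covering pair for $G'$ with the local solution $(\sey, X)$ to produce a packing-covering pair for $G$. Since $\tau(G') \leq 2\nu(G')$ by hypothesis, I can fix a set $\tee'$ of $\nu(G')$ edge-disjoint triangles in $G'$ and an edge set $Y'$ with $G' - Y'$ triangle-free and $\sizeof{Y'} \leq 2\nu(G')$. The natural candidates are $\tee = \tee' \cup \sey$ for the packing and $Y = Y' \cup X$ for the cover. I then need to check three things: that $Y$ is a valid cover of $G$ (that is, $G - Y$ is triangle-free), that $\tee$ is a valid packing (the triangles are pairwise edge-disjoint), and that the size bound $\sizeof{Y} \leq 2\sizeof{\tee}$ holds.

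First I would verify the cover. Any triangle $T$ of $G$ either contains a vertex of $V_0$ or does not. If it does, then condition~(ii) guarantees $G - X$ has no such triangle, so $T$ meets $X \subset Y$. If $T$ avoids $V_0$, then $T$ is a triangle of $G - V_0$; I must argue it is also a triangle of $G'=(G-X)-V_0$ up to edges of $X$, so that $Y'$ meets it. The clean way is: if $T$ avoids both $V_0$ and $X$, then $T$ lives in $G'$, hence is destroyed by $Y'$; otherwise $T$ meets $X \subset Y$. Either way $T$ meets $Y$, so $G - Y$ is triangle-free.

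Next I would verify edge-disjointness of $\tee = \tee' \cup \sey$, which is the crux of the argument and where condition~(iii) earns its keep. The triangles within $\tee'$ are mutually edge-disjoint, as are those within $\sey$; the danger is a shared edge between a triangle of $\tee'$ and a triangle of $\sey$. A triangle of $\tee'$ lives in $G' = (G - X) - V_0$, so all its edges lie outside $V_0$ and outside $X$. If such an edge were also a $\sey$-edge, it would be a $\sey$-edge with both endpoints outside $V_0$, and condition~(iii) forces every such edge into $X$ — contradicting that the edge avoids $X$. Hence no edge is shared, and $\tee$ is a packing of size $\sizeof{\tee'} + \sizeof{\sey}$. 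I expect this step to be the main obstacle, since it is the whole reason the reducibility definition is shaped the way it is; the key is to track carefully that $\tee'$-edges are simultaneously outside $V_0$ and outside $X$.

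Finally, the counting is routine: $\sizeof{Y} \leq \sizeof{Y'} + \sizeof{X} \leq 2\nu(G') + 2\sizeof{\sey}$ by the hypothesis and condition~(i), while $\sizeof{\tee} = \sizeof{\tee'} + \sizeof{\sey} = \nu(G') + \sizeof{\sey}$ by edge-disjointness. Therefore $\sizeof{Y} \leq 2\sizeof{\tee} \leq 2\nu(G)$, and since $Y$ covers all triangles of $G$ we get $\tau(G) \leq \sizeof{Y} \leq 2\nu(G)$, completing the proof. I should double-check that $\sizeof{Y'} = \sizeof{Y' \cup X}$ poses no issue when $Y'$ and $X$ overlap, but overlaps only help the bound, so the inequality $\sizeof{Y} \leq \sizeof{Y'} + \sizeof{X}$ is safe.
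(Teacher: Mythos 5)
Your proposal is correct and follows essentially the same route as the paper's proof: the same construction $\tee = \tee' \cup \sey$, $Y = Y' \cup X$, the same use of condition~(iii) to rule out a shared edge between a $\tee'$-triangle (which avoids both $X$ and $V_0$) and a $\sey$-triangle, and the same three-way case analysis showing every triangle of $G$ meets $Y$. The counting step, including your observation that overlaps between $Y'$ and $X$ only help, matches the paper's (terser) treatment.
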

\begin{proof}
  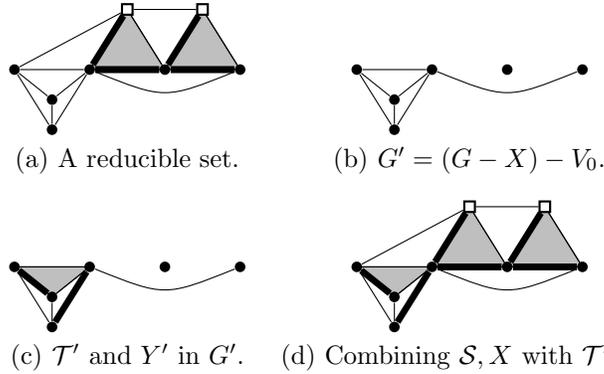
\begin{figure}
    \centering
    \begin{tabular}{cc}
      \begin{tikzpicture}
        \begin{scope}[yscale=.8]
        \avoint{} (v1) at (0cm, 0cm) {};
        \avoint{} (v2) at (1cm, 0cm) {};
        \apoint{} (w1) at (-1.5cm, -1cm) {};
        \apoint{} (w2) at (-.5cm, -1cm) {};
        \apoint{} (w3) at (.5cm, -1cm) {};
        \apoint{} (w4) at (1.5cm, -1cm) {};
        \apoint{} (z1) at (-1cm, -1.5cm) {};
        \apoint{} (z2) at (-1cm, -2cm) {};
        \draw (v1) -- (v2);
        \draw (w2) .. controls (.5cm, -1.5cm) .. (w4);
        \draw (w1) -- (w2) -- (w3) -- (w4);
        \draw (v1) -- (w1); \draw (v1) -- (w2); \draw (v1) -- (w3);
        \draw (v2) -- (w3); \draw (v2) -- (w4);
        \draw (w1) -- (z1); \draw (w1) -- (z2);
        \draw (w2) -- (z1); \draw (w2) -- (z2);
        \draw (z1) -- (z2);
        \begin{tris}
          \draw[triangle1] (v1.center) -- (w2.center) -- (w3.center) -- cycle;
          \draw[triangle1] (v2.center) -- (w3.center) -- (w4.center) -- cycle;          
        \end{tris}
        \draw[xedge] (w2) -- (w3); \draw[xedge] (w3) -- (w4);
        \draw[xedge] (w2) -- (w3); \draw[xedge] (w3) -- (w4);
        \draw[xedge] (v1) -- (w2); \draw[xedge] (v2) -- (w3);
        \end{scope}
      \end{tikzpicture}      
      &\begin{tikzpicture}
        \begin{scope}[yscale=.8]
        \apoint{} (w1) at (-1.5cm, -1cm) {};
        \apoint{} (w2) at (-.5cm, -1cm) {};
        \apoint{} (w3) at (.5cm, -1cm) {};
        \apoint{} (w4) at (1.5cm, -1cm) {};
        \apoint{} (z1) at (-1cm, -1.5cm) {};
        \apoint{} (z2) at (-1cm, -2cm) {};
        \draw (w1) -- (w2);
        \draw (w1) -- (z1); \draw (w1) -- (z2);
        \draw (w2) -- (z1); \draw (w2) -- (z2);
        \draw (w2) .. controls (.5cm, -1.5cm) .. (w4);
        \draw (z1) -- (z2);
        \begin{invis}
        \draw[white] (.5cm, 0cm) ellipse (1cm and .5cm);          
        \end{invis}
        \end{scope}
        \end{tikzpicture}\\
      (a) A reducible set. & (b) $G' = (G-X) - V_0$. \\
      \begin{tikzpicture}
        \begin{scope}[yscale=.8]
        \apoint{} (w1) at (-1.5cm, -1cm) {};
        \apoint{} (w2) at (-.5cm, -1cm) {};
        \apoint{} (w3) at (.5cm, -1cm) {};
        \apoint{} (w4) at (1.5cm, -1cm) {};
        \apoint{} (z1) at (-1cm, -1.5cm) {};
        \apoint{} (z2) at (-1cm, -2cm) {};
        \draw (w1) -- (w2);
        \draw (w1) -- (z1); \draw (w1) -- (z2);
        \draw (w2) -- (z1); \draw (w2) -- (z2);
        \draw (w2) .. controls (.5cm, -1.5cm) .. (w4);
        \draw (z1) -- (z2);
        \begin{invis}
        \draw[white] (.5cm, 0cm) ellipse (1cm and .5cm);          
        \end{invis}
        \begin{tris}
          \draw[triangle1] (w1.center) -- (w2.center) -- (z1.center) -- cycle;
        \end{tris}
        \draw[xedge] (w1) -- (z1);
        \draw[xedge] (w2) -- (z2);
        \end{scope}
        \end{tikzpicture} & 
\begin{tikzpicture}
        \begin{scope}[yscale=.8]
        \avoint{} (v1) at (0cm, 0cm) {};
        \avoint{} (v2) at (1cm, 0cm) {};
        \apoint{} (w1) at (-1.5cm, -1cm) {};
        \apoint{} (w2) at (-.5cm, -1cm) {};
        \apoint{} (w3) at (.5cm, -1cm) {};
        \apoint{} (w4) at (1.5cm, -1cm) {};
        \apoint{} (z1) at (-1cm, -1.5cm) {};
        \apoint{} (z2) at (-1cm, -2cm) {};
        \draw (v1) -- (v2);
        \draw (w1) -- (w2) -- (w3) -- (w4);
        \draw (w2) .. controls (.5cm, -1.5cm) .. (w4);
        \draw (v1) -- (w1); \draw (v1) -- (w2); \draw (v1) -- (w3);
        \draw (v2) -- (w3); \draw (v2) -- (w4);
        \draw (w1) -- (z1); \draw (w1) -- (z2);
        \draw (w2) -- (z1); \draw (w2) -- (z2);
        \draw (z1) -- (z2);
        \begin{invis}
          \draw[white] (.5cm, 0cm) ellipse (1cm and .5cm);
        \end{invis}
        \begin{tris}
          \draw[triangle1] (v1.center) -- (w2.center) -- (w3.center) -- cycle;
          \draw[triangle1] (v2.center) -- (w3.center) -- (w4.center) -- cycle;
          \draw[triangle1] (w1.center) -- (w2.center) -- (z1.center) -- cycle;
        \end{tris}
        \draw[xedge] (w2) -- (w3); \draw[xedge] (w3) -- (w4);
        \draw[xedge] (w2) -- (w3); \draw[xedge] (w3) -- (w4);
        \draw[xedge] (v1) -- (w2); \draw[xedge] (v2) -- (w3);
        \draw[xedge] (w1) -- (z1);
        \draw[xedge] (w2) -- (z2);
        \end{scope}
        \end{tikzpicture}\\
      (c) $\tee'$ and $Y'$ in $G'$. & (d) Combining $\sey, X$ with $\tee', Y'$.
    \end{tabular}
    \caption{Using a reducible set. Shaded triangles represent $\sey$
      and $\tee'$; thick edges represent $X$ and $Y'$; square white
      vertices represent $V_0$.}
    \label{fig:reducible}
  \end{figure}
  Let $\tee'$ be a largest set of edge-disjoint triangles in $G'$, and
  let $Y'$ be a smallest set of edges such that $G' - Y'$ is
  triangle-free; by hypothesis, $\sizeof{Y'} \leq
  2\sizeof{\tee'}$. Let $\tee = \tee' \cup \sey$ and $Y = Y' \cup X$.
  (The process is illustrated in Figure~\ref{fig:reducible}.)  We show
  that $\tee$ is a set of edge-disjoint triangles in $G$, that $G - Y$
  is triangle-free, and that $\sizeof{Y} \leq 2\sizeof{\tee}$, thus
  establishing the desired conclusion. The third condition is
  immediate from $\sizeof{Y'} \leq 2\sizeof{\tee'}$ and $\sizeof{X}
  \leq 2\sizeof{\sey}$.

  To show that the triangles in $\tee$ are pairwise edge-disjoint, it
  suffices to show that no $\sey$-edge is a $\tee'$-edge. This holds
  because every $\tee'$-edge is contained in $(G - X) - V_0$, while
  every $\sey$-edge is incident to $V_0$ or contained in $X$, by
  Condition~(iii) of Definition~\ref{def:reducible}.

  Next we show that $G - Y$ is triangle-free. This holds because any
  triangle $T$ in $G$ satisfies one of the following three conditions:
  \begin{enumerate}[(1)]
  \item $T$ is contained in $(G-X) - V_0$; or
  \item $T$ contains a vertex of $V_0$; or
  \item $T$ contains an edge of $X$.
  \end{enumerate}
  Triangles of the first type meet $Y'$, by hypothesis; 
  triangles of the second type meet $X$, by Condition~(ii) of
  Definition~\ref{def:reducible}; triangles of the third type
  meet $X$ by definition.
\end{proof}
Our strategy for applying Lemma~\ref{lem:reducible-wins} is typical of
discharging arguments: we show that various possible substructures of
a graph $G$ imply the existence of a reducible set, and we show that
every graph with average degree less than $7$ has one of these
substructures. For more background on the discharging method, see
\cite{discharging}.

To give the list of forbidden substructures, a few new definitions are
needed:
\begin{definition}
  A graph $G$ is \emph{robust} if for every $v \in
  V(G)$, every component of $G[N(v)]$ has order at least $5$.
\end{definition}
\noindent If $G$ is robust, then $\delta(G) \geq 5$. Also, $G[N(v)]$ is
connected whenever $d(v) < 10$.

First-time readers may wish to skip the rest of this section, as well
as Section~\ref{sec:subsume}, as the weaker version of the result
given in Section~\ref{sec:625} has a much simpler proof. The remaining
definitions are not needed for the proof of the weaker result.
\begin{definition}
  A vertex $u$ \emph{subsumes} a vertex $v$ if $N[u] \supset N[v]$.
\end{definition}
\noindent Equivalently, $u$ subsumes $v$ if $u$ is a dominating vertex in $G[N(v)]$.
\begin{definition}
  A $6$-vertex $v$ is \emph{thin} if $\overline{G[N(v)]}$ contains
  a matching of size $3$.
\end{definition}
The full list of forbidden substructures is given by the following two
lemmas; the proof of the second lemma will occupy most of the paper.
For each part of the second lemma, we indicate which later results
imply that part of the lemma.  
\begin{lemma}\label{lem:robust}
  If $G$ is a minimal counterexample to Tuza's~Conjecture, then $G$
  is robust.
\end{lemma}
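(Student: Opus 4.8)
The plan is to argue by contradiction through Lemma~\ref{lem:reducible-wins}. Suppose $G$ is a minimal counterexample to Tuza's~Conjecture that fails to be robust, so that some vertex $v$ has a component $C$ of $G[N(v)]$ of order at most $4$. It suffices to exhibit a reducible set $V_0$: then $G' = (G-X)-V_0$ has fewer vertices than $G$, so $\tau(G') \le 2\nu(G')$ by minimality, and Lemma~\ref{lem:reducible-wins} gives $\tau(G) \le 2\nu(G)$, contradicting the choice of $G$. I would first record the standard reduction that every edge of $G$ lies in a triangle, which already forces every component of every $G[N(v)]$ to have order at least $2$; the remaining task is to exclude orders $2$, $3$, and $4$.

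The reducible set I would try is $V_0 = V(C)$ itself, \emph{excluding} $v$. The gain from this choice is that Condition~(ii) of Definition~\ref{def:reducible} then only constrains triangles meeting $V(C)$: every triangle through $v$ that is supported by a different component of $G[N(v)]$ is disjoint from $V(C)$ and may be left intact, to be handled inside $G'$ by the minimality hypothesis. In particular the possibly large or dense components of $G[N(v)]$ are irrelevant, and I only need a local solution around $C$. Writing $U = V(C)$, the triangles meeting $U$ split into those contained in $\{v\} \cup U$ and those using a vertex outside $\{v\} \cup U$. The first family is small and controlled: any common neighbour of $v$ and a vertex of $U$ again lies in $U$, so these are exactly $\{v,x,y\}$ with $xy \in E(C)$ together with the triangles inside $U$. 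For this family I would take $\sey$ from a maximum matching of $C$ viewed as triangles through $v$, absorbing the matched edges of $C$ into $X$ to meet Condition~(iii); since $C$ has order at most $4$ its matching number is at most $2$, and a short case analysis over the few connected graphs on at most $4$ vertices (path, star, cycle, paw, diamond, $K_4$, and the smaller cases) yields an $X$ with $\sizeof{X} \le 2\sizeof{\sey}$ destroying every such triangle.

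The main obstacle is the external triangles through $U$. Condition~(ii) demands that $X$ destroy \emph{every} triangle meeting $U$, not merely those inside $\{v\} \cup U$, and Condition~(iii) couples the cost of this destruction back to the packing $\sey$. What is really needed is a local K\"onig--Egerv\'ary-type inequality for the vertex set $U$: the triangles meeting $U$ should admit an edge cover of size at most twice a maximum edge-disjoint packing, and moreover the packing must be choosable so that exactly its edges lying entirely outside $U$ are the ones placed in $X$. Establishing this duality, and reconciling it with the internal construction so that the two packings stay edge-disjoint and the two covering sets jointly satisfy Condition~(iii), is the crux of the argument. This is precisely the flavour of local packing--covering statement that the weak K\"onig--Egerv\'ary framework is built to supply, and I would expect to invoke that machinery to finish this step.
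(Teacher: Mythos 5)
There is a genuine gap, and it is exactly the step you flag as ``the crux.'' By choosing the \emph{vertex} set $V_0 = V(C)$, Condition~(ii) of Definition~\ref{def:reducible} obliges $X$ to destroy every triangle of $G$ meeting $V(C)$ --- including triangles through vertices of $C$ that avoid $v$ entirely. The vertices of $C$ may have large degree in $G$ and lie in arbitrarily many such external triangles, so destroying them all at cost $\sizeof{X} \leq 2\sizeof{\sey}$, with Condition~(iii) forcing $X$ to absorb every $\sey$-edge outside $V_0$, amounts to a local Tuza-type statement at $U = V(C)$ that nothing in the paper supplies. In particular the weak K\"onig--Egerv\'ary machinery cannot rescue this: Lemma~\ref{lem:wke} converts a witness $(M,Q)$ for a subgraph of a \emph{link} $G[N(v)]$ into a pair $(\sey,X)$ controlling only triangles through the single apex $v$; it says nothing about triangles meeting a vertex set $U$ that do not pass through a common neighbor. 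Indeed, if such a local duality held for arbitrary small vertex sets, one would not need the discharging argument at all, since neighborhoods like $K_5$ are not in $\wke$ (this is why $5$-vertices require special treatment later in the paper). So your plan, as set up, cannot be completed with the available tools, and your closing sentence is an expectation rather than a proof.

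The paper's fix is precisely to avoid this obligation by switching to \emph{edge} reducibility (Definition~\ref{def:edge-reducible} and Lemma~\ref{lem:edge-reducible}): take $E_0 = \{vw \st w \in V(C)\}$, the star from $v$ into the small component $C$. Condition~(ii) then only requires destroying triangles containing an edge of $E_0$, and every such triangle has the form $vxy$ with $x,y \in V(C)$, since any common neighbor of $v$ and a vertex of $C$ lies in the same component $C$ of $G[N(v)]$. Your problematic external triangles simply survive into $G' = (G-X) - E_0$, which is a proper subgraph of $G$ (only edges are deleted, so minimality still applies), and are handled there by the induction hypothesis. With this reformulation the whole lemma is three citations: $C$ is connected on at most $4$ vertices, so $C \in \wke$ by Corollary~\ref{cor:deg4} (your case analysis over paths, stars, paws, diamonds, etc.\ is subsumed by Proposition~\ref{prop:longodd}, since a graph on at most $4$ vertices has no odd cycle of length greater than $3$); Lemma~\ref{lem:wke} applied to the union of components $G_0 = C$ turns the witness $(M,Q)$ into a reducible edge set; and Lemma~\ref{lem:edge-reducible} yields the contradiction. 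Your instinct to exploit the component structure and to leave triangles supported by other components to $G'$ was right --- but the correct carrier of that idea is the edge set $E_0$, not the vertex set $V(C)$.
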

\begin{lemma}\label{lem:redlem}
  If $G$ is robust and has no reducible set, then the following conditions hold.
  \begin{enumerate}[(a)]
  \item Every $6^-$-vertex $v \in V(G)$ satisfies $\Delta(\comp{G[N(v)]}) \leq 1$ and $\sizeof{E(\comp{G[N(v)]})} \neq 2$. (Proposition~\ref{prop:comp-matching})\label{rl:match}
  \item The $6^-$-vertices of $G$ form an independent set. (Proposition~\ref{prop:red-pair-6})\label{rl:ind6}
  \item No $7$-vertex subsumes any $6$-vertex. (Lemma~\ref{lem:6-dom-7})\label{rl:7sub6}
  \item No $7$-vertex is adjacent to any thin $6$-vertex. (Lemma~\ref{lem:6-perf-7})\label{rl:7adjL6}
  \item No $8^-$-vertex subsumes any $5$-vertex. (Lemma~\ref{lem:few-8-nbors} and Proposition~\ref{prop:red-pair-6})\label{rl:8sub5}
  \item Every $9$-vertex subsumes at most three $6^-$-vertices, and a
    $9$-vertex subsuming three $6^-$-vertices is adjacent to exactly
    three $6^-$-vertices. (Lemma~\ref{lem:9conq})\label{rl:9conq}
  \item Every $10^+$-vertex $v$ that subsumes some $6^-$-vertex has at
    most $d(v)-6$ neighbors that are
    $6^-$-vertices. (Lemma~\ref{lem:bigconq})\label{rl:bigconq}
  \item Every vertex $v$ with $d(v) \in \{7,8,9\}$ has at most $d(v)-4$ neighbors that are $6^-$-vertices. (Corollary~\ref{cor:few6minus})\label{rl:minus4}
  \end{enumerate}
\end{lemma}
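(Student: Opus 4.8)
The statement is a compendium: each of parts (\ref{rl:match})--(\ref{rl:minus4}) is a separate structural restriction, and the proof of the lemma itself amounts to invoking, beside each part, the later result cited there. What unifies those results, and what I would set up first, is a single reducibility template applied in contrapositive form: for each part I assume the stated conclusion fails and then exhibit a reducible set, contradicting the hypothesis that $G$ has none.

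The template rests on the observation that, for any vertex $v$, the triangles of $G$ through $v$ are in bijection with the edges of $G[N(v)]$, a triangle $vwx$ corresponding to the edge $wx$. Taking $V_0 = \{v\}$, Condition~(ii) of Definition~\ref{def:reducible} asks that $X$ meet every such triangle, i.e.\ that the incident edges $vw$ together with the inner edges $wx$ placed in $X$ cover all of $E(G[N(v)])$; Condition~(iii) forces the inner edge $wx$ of every $\sey$-triangle $vwx$ into $X$. Hence a matching $M = \{w_1x_1,\dots,w_tx_t\}$ in $G[N(v)]$ yields a candidate packing $\sey = \{vw_ix_i\}$ with $\sizeof{\sey} = t$ whose outer edges are exactly $M$, while a vertex cover $C$ of $G[N(v)]$ yields completing edges $\{vc : c \in C\}$; setting $X = M \cup \{vc : c \in C\}$ gives $\sizeof{X} = t + \sizeof{C}$ against the budget $2t$. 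When $G[N(v)]$ is a K\"onig--Egerv\'ary graph, a maximum matching and a minimum cover have equal size $t$, so $\sizeof{X} = 2t$ and $\{v\}$ is reducible. Thus ``no reducible set'' forces the local graphs $G[N(v)]$ to be far from K\"onig--Egerv\'ary, and the weak K\"onig--Egerv\'ary graphs of Section~\ref{sec:wke} are the device I would use to extract the precise obstruction even when $G[N(v)]$ falls short of exact equality. Robustness feeds this machine: it supplies $\delta(G) \geq 5$ and connectivity of $G[N(v)]$ for $d(v) < 10$, so the relevant neighborhoods are dense enough to host the required matchings.

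The parts then split by configuration. Parts (\ref{rl:match}) and (\ref{rl:ind6}) constrain a single $6^-$-vertex (or an adjacent pair), and I would prove them by a direct packing/covering analysis of $G[N(v)]$ as above, translating the failure of part (\ref{rl:match}) --- a vertex of degree $\geq 2$ in $\comp{G[N(v)]}$, or exactly two edges there --- into a neighborhood dense enough that $\{v\}$ becomes reducible. Parts (\ref{rl:7sub6})--(\ref{rl:bigconq}) all involve a higher-degree vertex $u$ that \emph{subsumes} a low-degree $v$, i.e.\ dominates $G[N(v)]$; such a $u$ contributes an abundant supply of triangles sharing edges with $v$, and the arguments become a count of how many $6^-$-neighbors a vertex of each degree can carry before the combined neighborhood turns reducible. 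The $9$- and $10^+$-vertex cases (\ref{rl:9conq})--(\ref{rl:minus4}) require the finer subsumption bookkeeping of Section~\ref{sec:subsume}, which is exactly what raises the attainable threshold from $25/4$ to $7$.

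The main obstacle throughout is keeping Condition~(iii) compatible with the tight budget $\sizeof{X} \leq 2\sizeof{\sey}$. A careless packing pairs each inner edge with its own triangle but then needs too many completing edges, overshooting $2\sizeof{\sey}$; the delicate step is to choose $\sey$ so that its outer edges \emph{coincide with}, rather than merely lie among, the edges one would in any case delete to cover the remaining triangles. Making the matching and the cover align in this way is precisely the role I expect the weak K\"onig--Egerv\'ary framework to play, and I anticipate the higher-degree, subsumption-driven cases to demand the most intricate analysis, since there the choice of $V_0$ and the triangle budget must be balanced across two interacting neighborhoods at once.
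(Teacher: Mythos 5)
Your framework is a faithful description of the paper's architecture: each part is proved in contrapositive by exhibiting a reducible set, triangles through $v$ correspond to edges of $G[N(v)]$, and your matching/cover template is exactly Definition~\ref{def:wke} plus Lemma~\ref{lem:wke}. But as a proof of Lemma~\ref{lem:redlem} there is a genuine gap: you never leave the template. The only mechanism you actually establish is that $G[N(v)] \in \wke$ makes $\{v\}$ reducible, and none of the eight parts follows from that mechanism alone, because in a graph with no reducible set the relevant neighborhoods are precisely the ones where the template fails. Already in part (\ref{rl:match}), the case $\sizeof{E(\comp{G[N(v)]})} = 2$ with $\Delta(\comp{G[N(v)]}) \leq 1$ means the two missing edges are disjoint, and then $G[N(v)]$ is \emph{not} weak K\"onig--Egerv\'ary (Proposition~\ref{prop:weak-comp-matching} requires $\Delta(\comp{H}) > 1$); the paper must instead exhibit bespoke packings in Proposition~\ref{prop:comp-matching} --- four triangles with $X = E(H)$ when $d(v)=5$, five triangles when $d(v)=6$ --- and verify Conditions (i)--(iii) directly. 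Your proposal contains no such construction and no criterion that would produce one.

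The gap widens in parts (\ref{rl:ind6})--(\ref{rl:9conq}), which are not single-vertex statements at all: they require reducible sets $V_0 = \{u,v\}$ (Propositions~\ref{prop:red-pair-6}, Lemmas~\ref{lem:few-8-nbors}, \ref{lem:6-dom-7}, \ref{lem:6-perf-7}) or even $V_0 = \{v,w_1,w_2,w_3,w'\}$ (Lemma~\ref{lem:9conq}). For such $V_0$, Condition~(iii) of Definition~\ref{def:reducible} exempts from $X$ every $\sey$-edge incident to $V_0$, which changes the budget arithmetic entirely; your accounting (outer edges of $\sey$ equal the matching, $\sizeof{X} = \sizeof{M} + \sizeof{C} \leq 2\sizeof{M}$) is specific to $V_0 = \{v\}$ and says nothing about, e.g., the case analysis over $\sizeof{N(u) \cap N(v)} \in \{3,4,5\}$ in Proposition~\ref{prop:red-pair-6}, or the eight-triangle packing with $X \supset E(G[Z])$ (ten edges inside the co-neighborhood) that proves the $9$-vertex bound. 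Even the parts that do reduce to single-vertex reducibility, (\ref{rl:bigconq}) and (\ref{rl:minus4}), need nontrivial work you omit: a Hall's-condition argument and the independent-set criterion of Corollary~\ref{cor:ind} to certify $G[N(v)] \in \wke$ from the degree hypotheses. You correctly anticipate where the difficulty lies (``two interacting neighborhoods,'' aligning the matching with the cover), but anticipating it is not resolving it: the proposal supplies no construction, no case split, and no verification of Conditions (i)--(iii) for any part, so it is a road map for the paper's proof rather than a proof.
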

Postponing the proof of Lemmas~\ref{lem:robust} and~\ref{lem:redlem},
we now give the proof of the main theorem.
\begin{lemma}\label{lem:has-reducible}
  Every robust graph with average degree less than $7$ has a reducible set.
\end{lemma}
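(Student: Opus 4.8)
The plan is a standard discharging argument. Assume for contradiction that $G$ is a robust graph with average degree less than $7$ and no reducible set, so that all parts of Lemma~\ref{lem:redlem} apply. Assign to each vertex $v$ the initial charge $\charge(v) = d(v) - 7$; then $\sum_{v} \charge(v) = 2\sizeof{E(G)} - 7\sizeof{V(G)} < 0$ by the hypothesis on the average degree. Since $G$ is robust we have $\delta(G) \geq 5$, so the only vertices with negative charge are the $5$-vertices (charge $-2$) and the $6$-vertices (charge $-1$); every $7$-vertex has charge $0$ and every $8^+$-vertex has positive charge. I would design discharging rules that move the surplus onto the $6^-$-vertices so that every vertex finishes with nonnegative charge, contradicting the strict inequality above.

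First I would unpack \rlpart{match} to describe the neighborhoods of low-degree vertices precisely. Since $\Delta(\comp{G[N(v)]}) \leq 1$, the graph $\comp{G[N(v)]}$ is a matching, so $G[N(v)]$ is a complete graph minus a matching, and a neighbor $u$ of $v$ subsumes $v$ exactly when $u$ is left uncovered by this matching. Using $\sizeof{E(\comp{G[N(v)]})} \neq 2$, a short count then shows that a $5$-vertex is subsumed by at least $3$ of its neighbors and a non-thin $6$-vertex by at least $4$, while a thin $6$-vertex (whose complement is a perfect matching) is subsumed by none. The crucial point is that the subsuming neighbors are automatically high-degree: by \rlpart{8sub5} every neighbor subsuming a $5$-vertex is a $9^+$-vertex, by \rlpart{7sub6} every neighbor subsuming a $6$-vertex is an $8^+$-vertex, and by \rlpart{7adjL6} every neighbor of a thin $6$-vertex is an $8^+$-vertex. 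In particular, combined with \rlpart{ind6}, a $7$-vertex is never a donor, consistent with its charge being $0$.

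This suggests the following rules: each $5$-vertex draws $2/s$ of a unit from each of its $s \geq 3$ subsumers; each non-thin $6$-vertex draws $1/s'$ from each of its $s' \geq 4$ subsumers; and each thin $6$-vertex draws $1/6$ from each of its six neighbors. By construction every $6^-$-vertex finishes at exactly $0$ and every $7$-vertex stays at $0$, so it remains to check that no $8^+$-vertex is overdrawn. Each donor sends at most $2/3$ to each subsumed $5$-vertex, at most $1/4$ to each subsumed non-thin $6$-vertex, and exactly $1/6$ to each adjacent thin $6$-vertex, and I would bound its total outflow using the counting bounds \rlpart{minus4} for $d(v) \in \{7,8,9\}$, \rlpart{9conq} for $9$-vertices, and \rlpart{bigconq} for $10^+$-vertices.

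The main obstacle is this final verification, a case analysis on $d(v)$ that is tight in two places. For an $8$-vertex, \rlpart{minus4} allows up to four $6^-$-neighbors, each costing at most $1/4$, so the outflow is at most its surplus $1$. For a $9$-vertex the extremal case is when it subsumes three $5$-vertices and sends $3 \cdot (2/3) = 2$, exactly matching its surplus; here it is precisely the refinement in \rlpart{9conq} that it is then adjacent to no further $6^-$-vertex which prevents the outflow from exceeding $2$. For $d(v) \geq 10$, if $v$ subsumes some $6^-$-vertex then \rlpart{bigconq} bounds its $6^-$-neighbors by $d(v) - 6$, and $(d(v)-6)\cdot(2/3) \leq d(v) - 7$ holds for $d(v) \geq 9$; if $v$ subsumes no $6^-$-vertex it only feeds thin $6$-vertices, at total cost at most $d(v)/6 \leq d(v) - 7$. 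In every case the final charge is nonnegative, so $\sum_v \charge(v) \geq 0$, the desired contradiction.
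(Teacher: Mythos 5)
Your proposal is correct and is essentially the paper's own proof: the same three discharging rules ($5$-vertices charged to their subsumers at rate at most $2/3$, non-thin $6$-vertices to their subsumers at rate at most $1/4$, thin $6$-vertices taking $1/6$ from every neighbor), justified by the same parts of Lemma~\ref{lem:redlem}, with the same tight cases at $8$-, $9$-, and $10^+$-vertices. The only differences are cosmetic bookkeeping: you normalize the charge to $d(v)-7$ and have each low-degree vertex draw its deficit split exactly among its $s \geq 3$ (resp.\ $s' \geq 4$) subsumers, rather than the paper's flat $2/3$ and $1/4$ from each, which changes nothing in the donor-side verification.
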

\begin{proof}
  Assuming that $G$ has no reducible set, we use the discharging
  method to show that $G$ has average degree at least $7$. Give every
  vertex $v$ initial charge $d(v)$. We apply the following discharging
  rule:
  \begin{itemize}
  \item Every $5$-vertex takes charge $2/3$ from each vertex subsuming it;
  \item Every thin $6$-vertex takes charge $1/6$ from each neighbor;
  \item Every non-thin $6$-vertex takes charge $1/4$ from each vertex subsuming it.
  \end{itemize}
  We claim that every vertex has final charge at least $7$, yielding
  average degree at least $7$ in $G$.

  First we consider the $6^-$-vertices. By \rlpart{ind6}, no two such
  vertices are adjacent, so no $6^-$-vertex loses any charge when the
  discharging rule is applied. Thus we only need to check that each
  type of $6^-$-vertex gains enough charge to reach $7$.  There are no
  $4^-$-vertices, since $G$ is robust.  By \rlpart{match}, every
  $5$-vertex is subsumed by at least three vertices, and hence gains
  at least $2$.  Every thin $6$-vertex gains exactly $1$, for final
  charge $7$.  By \rlpart{match}, the neighborhood of a non-thin
  $6$-vertex $v$ lacks at most one edge; hence $v$ is subsumed by at
  least four vertices, and gains at least $1$.

  Now we consider the higher-degree vertices. Each $7$-vertex starts
  with charge $7$ and loses none, since it does not subsume any $5$- or
  $6$-vertices and is not adjacent to any thin $6$-vertex, by parts
  (\ref{rl:7sub6})--(\ref{rl:8sub5}) of Lemma~\ref{lem:redlem}.
  
  Next, let $v$ be an $8$-vertex. By \rlpart{8sub5}, $v$ does not
  subsume any $5$-vertices. By \rlpart{minus4}, $v$ is adjacent to at
  most four $6^-$-vertices; hence, $v$ loses at most $4(1/4)$ charge,
  yielding final charge at least $7$.

  Now, let $v$ be a $9$-vertex. By \rlpart{minus4}, $v$ has at most
  five $6^-$-neighbors in total.  Hence, if $v$ subsumes at most two
  $6^-$-vertices, then $v$ loses at most $2(2/3) + 3(1/6)$ charge,
  yielding final charge greater than $7$. On the other hand, if $v$
  subsumes three $6^-$-vertices, then by \rlpart{9conq} we see that
  $v$ is adjacent to exactly those three $6^-$-vertices, so $v$ loses
  exactly $3(2/3)$ charge, yielding final charge at least $7$.

  Finally, let $v$ be a $k$-vertex with $k \geq 10$. If $v$ subsumes
  no $6^-$-vertex, then $v$ loses charge at most $k/6$, which yields
  final charge at least $7$ since $k - k/6 \geq 7$. If $v$ subsumes
  some $6^-$-vertex, then at most $k-6$ neighbors of $v$ are
  $6^-$-vertices, by \rlpart{bigconq}. Thus $v$ loses at most
  $2(k-6)/3$, which yields final charge at least $7$ since $k -
  2(k-6)/3 \geq 7$. Hence all vertices have final charge at least $7$,
  yielding average degree at least $7$.
\end{proof}
\begin{fakemain}
  If $\mad(G) < 7$, then $\tau(G) \leq 2\nu(G)$.  
\end{fakemain}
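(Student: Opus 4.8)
The plan is to assemble the three lemmas above into a short minimal-counterexample argument, with the only real work being to check that the sparsity hypothesis threads correctly through each application. Suppose the theorem fails, and choose a graph $G$ with $\mad(G) < 7$ and $\tau(G) > 2\nu(G)$ that is smallest possible, taking $\sizeof{V(G)}$ minimum and, subject to that, $\sizeof{E(G)}$ minimum. The one structural fact I would record at the outset is that $\mad$ is monotone under passing to subgraphs: directly from its definition, every subgraph $H \subset G$ satisfies $\mad(H) \leq \mad(G)$. Thus the class of graphs with maximum average degree below $7$ is closed under taking subgraphs, which is what allows both the hypothesis of Lemma~\ref{lem:robust} and the reduction of Lemma~\ref{lem:reducible-wins} to be met simultaneously.

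Next I would verify that $G$ is a minimal counterexample to Tuza's Conjecture in the sense required by Lemma~\ref{lem:robust}, namely that no proper subgraph of $G$ is a counterexample. Indeed, any proper subgraph $H \subsetneq G$ has $\mad(H) \leq \mad(G) < 7$ and is strictly smaller than $G$ in the (vertices, then edges) order, so by the minimality of $G$ it cannot be a counterexample; hence $\tau(H) \leq 2\nu(H)$. With $G$ established as a minimal counterexample, Lemma~\ref{lem:robust} shows that $G$ is robust.

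I would then bring in the discharging lemma and close the loop. The average degree of $G$ itself is $2\sizeof{E(G)}/\sizeof{V(G)} \leq \mad(G) < 7$, so $G$ is a robust graph of average degree less than $7$, and Lemma~\ref{lem:has-reducible} produces a reducible set $V_0$, say reducible using $\sey$ and $X$. Now $G' = (G-X) - V_0$ is obtained from $G$ by deleting the edges of $X$ together with the nonempty vertex set $V_0$, so $G'$ is a proper subgraph; consequently $\mad(G') < 7$ and, by the minimality argument of the previous paragraph, $\tau(G') \leq 2\nu(G')$. Lemma~\ref{lem:reducible-wins} then gives $\tau(G) \leq 2\nu(G)$, contradicting the choice of $G$.

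Because every lemma in this chain may be assumed, I do not expect the theorem to present a genuine difficulty; the one point demanding care is the minimality bookkeeping, and it is worth isolating as the crux. The two uses of minimality pull in opposite directions --- Lemma~\ref{lem:robust} wants $G$ to be a minimal counterexample so that a reducible set is forbidden, while Lemma~\ref{lem:reducible-wins} wants a strictly smaller graph that still lies in the class --- and both are reconciled solely by the observation that $\mad$ cannot increase when passing to a subgraph. I would also confirm the degenerate case $V_0 = V(G)$, in which $G'$ is edgeless so that $\tau(G') = \nu(G') = 0$ and the hypothesis of Lemma~\ref{lem:reducible-wins} holds trivially.
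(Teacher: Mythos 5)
Your proposal is correct and follows essentially the same route as the paper: take a minimal counterexample, use monotonicity of $\mad$ under subgraphs to conclude it is a minimal counterexample to Tuza's Conjecture outright, then chain Lemma~\ref{lem:robust}, Lemma~\ref{lem:has-reducible}, and Lemma~\ref{lem:reducible-wins} to reach a contradiction. Your extra bookkeeping (the explicit vertices-then-edges ordering and the degenerate case $V_0 = V(G)$, where $\tau(G') = \nu(G') = 0$ trivially) is sound but not needed beyond what the paper's shorter proof already records.
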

\begin{proof}
  If the claim fails, let $G$ be a minimal counterexample. Since
  $\mad(G) < 7$, any proper subgraph $G'$ also satisfies $\mad(G') <
  7$, so $\tau(G') \leq 2\nu(G')$ by the minimality of $G$. Thus, $G$
  is a minimal counterexample to Tuza's~Conjecture among all graphs.
  By Lemma~\ref{lem:robust}, $G$ is robust, so by
  Lemma~\ref{lem:has-reducible}, $G$ has a reducible set.  Now
  Lemma~\ref{lem:reducible-wins} yields $\tau(G) \leq 2\nu(G)$,
  contradicting the choice of $G$ as a counterexample.
\end{proof}
In the next section we explore some applications of
Theorem~\ref{thm:mainthm} and its supporting lemmas. The remainder of
the paper will then be devoted to proving Lemmas~\ref{lem:robust}
and~\ref{lem:redlem}.
\section{Consequences}\label{sec:consequences}
Theorem~\ref{thm:mainthm} extends several earlier results on Tuza's
Conjecture, yielding these results (or stronger versions thereof) as
consequences.  Tuza~\cite{Tuza} proved that the conjecture holds for
planar graphs.  The following corollary of Euler's~Formula extends the
result to toroidal graphs, which are the graphs of genus at most $1$.
The conjecture was not previously known to hold for such graphs.
\begin{proposition}\label{prop:euler}
  If $G$ is an $n$-vertex graph of genus $\gamma$ with $m$ edges, then
  $m \leq 3(n - 2 + 2\gamma)$. In particular, $G$ has average degree
  at most $6 + \frac{12(\gamma-1)}{n}$.
\end{proposition}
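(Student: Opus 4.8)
The plan is to apply Euler's formula to a minimum-genus embedding of $G$, bounding the number of faces using the simplicity of $G$. I would first reduce to the case where $G$ is connected with $n \geq 3$. Since the genus is additive over connected components (Battle, Harary, Kodama, and Youngs), and since each component on at least three vertices satisfies the per-component bound $m_i \leq 3(n_i - 2 + 2\gamma_i)$, summing these bounds gives $m \leq 3(n - 2 + 2\gamma)$ whenever every component is large, as the deficit of $-6$ per component only strengthens the inequality. Components on one or two vertices contribute at most one edge each and are easily absorbed; in any case they do not arise in the intended application, where $G$ contains triangles.

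So assume $G$ is connected with $n \geq 3$, and fix an embedding of $G$ into the orientable surface of genus $\gamma$ that realizes the genus. By a theorem of Youngs, a minimum-genus embedding of a connected graph is \emph{cellular}: every face is homeomorphic to an open disk. Cellularity is precisely what licenses Euler's formula in the form $n - m + f = 2 - 2\gamma$, where $f$ denotes the number of faces.

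Next I would bound $f$. Because $G$ is a simple graph on at least three vertices, the boundary walk of each face traverses at least three edges, while each edge lies on the boundary of exactly two faces (counted with multiplicity). Summing the face-boundary lengths therefore yields $3f \leq 2m$, i.e.\ $f \leq 2m/3$. Substituting into Euler's formula gives $2 - 2\gamma = n - m + f \leq n - m + 2m/3 = n - m/3$, which rearranges to $m \leq 3(n - 2 + 2\gamma)$, the first claim. The ``in particular'' statement then follows immediately: the average degree is $\tfrac{2m}{n} \leq \tfrac{6(n - 2 + 2\gamma)}{n} = 6 + \tfrac{12(\gamma - 1)}{n}$.

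The only genuine subtlety—and the step I would be most careful about—is invoking cellularity of the minimum-genus embedding, since the identity $n - m + f = 2 - 2\gamma$ requires every face to be a disk; a non-cellular embedding could produce a face count unrelated to $\gamma$ in the stated way. The face-length inequality $3f \leq 2m$ likewise quietly relies on $G$ being simple, so that no face has boundary length $1$ (a loop) or $2$ (parallel edges), and the reduction to connected graphs on at least three vertices is exactly what makes that hypothesis legitimate.
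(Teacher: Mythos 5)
Your proof is correct and follows exactly the route the paper intends: the paper states this proposition without proof, as a standard corollary of Euler's formula, and your argument---reduce to connected graphs via additivity of genus, use cellularity of a minimum-genus embedding (Youngs) to justify $n - m + f = 2 - 2\gamma$, and bound faces by $3f \le 2m$ using simplicity---is precisely that canonical derivation. One pedantic remark: the statement as literally written fails for $K_1$ and $K_2$ (e.g.\ $K_2$ has $m = 1 > 0 = 3(n-2+2\gamma)$), so your claim that tiny components are ``easily absorbed'' is valid only when some component has at least three vertices; as you note, this is harmless in the paper's applications, where $G$ contains triangles.
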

\begin{corollary}\label{cor:toroidal}
  If $G$ is toroidal, then $\tau(G) \leq 2\nu(G)$.
\end{corollary}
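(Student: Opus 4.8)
The plan is to deduce this corollary directly from Theorem~\ref{thm:mainthm} by verifying that every toroidal graph $G$ satisfies $\mad(G) < 7$. Since a toroidal graph has genus at most $1$, the natural tool is the edge bound $m \leq 3(n-2+2\gamma)$ from Proposition~\ref{prop:euler}, which for $\gamma = 1$ reads $m \leq 3n$ and thus bounds the average degree of $G$ by $6$. However, $\mad(G)$ is a maximum taken over all subgraphs, so bounding the average degree of $G$ itself is not enough; I would need the same bound for every subgraph of $G$.

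First I would observe that genus is monotone under taking subgraphs: if $G$ embeds on the torus, then so does every subgraph $H \subset G$, simply by restricting the embedding. Hence every subgraph of a toroidal graph is again toroidal, of genus at most $1$. Applying Proposition~\ref{prop:euler} to each subgraph $H$ with $\gamma = 1$ (the bound $3(n-2+2\gamma)$ being increasing in $\gamma$, so that taking $\gamma = 1$ is a safe upper estimate even for planar subgraphs) gives $\sizeof{E(H)} \leq 3\sizeof{V(H)}$, i.e.\ average degree at most $6$. Taking the maximum over all $H$ then yields $\mad(G) \leq 6 < 7$, and Theorem~\ref{thm:mainthm} completes the proof.

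The only obstacle I anticipate is a minor bookkeeping point: Proposition~\ref{prop:euler} is naturally stated for graphs with enough vertices to admit an embedding argument, so one should confirm that subgraphs $H$ with very few vertices do not spoil the maximum defining $\mad(G)$. This is immediate, since any $H$ with $\sizeof{V(H)} \leq 2$ has average degree at most $1$, well below $7$. With that edge case dispatched, the argument is entirely routine; the substance lies in reducing the topological hypothesis to the sparsity hypothesis $\mad(G) < 7$ via genus monotonicity.
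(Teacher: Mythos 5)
Your proof is correct and matches the paper's intended argument: the paper derives Corollary~\ref{cor:toroidal} from Proposition~\ref{prop:euler} in exactly this way, noting that a graph of genus $\gamma \leq 1$ has average degree at most $6 + \frac{12(\gamma-1)}{n} \leq 6$, and since subgraphs of toroidal graphs are toroidal this gives $\mad(G) \leq 6 < 7$, so Theorem~\ref{thm:mainthm} applies. Your attention to genus monotonicity under subgraphs and to small subgraphs is the right bookkeeping, and nothing further is needed.
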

For higher genus, we obtain a finitization result.
\begin{proposition}\label{prop:finite}
  For any fixed $\gamma$ with $\gamma \geq 2$, if $\tau(G) \leq
  2\nu(G)$ for all graphs $G$ of genus at most $\gamma$ with
  $\sizeof{V(G)} \leq 12(\gamma-1)$, then $\tau(G) \leq 2\nu(G)$ for
  all graphs $G$ of genus at most $\gamma$.
\end{proposition}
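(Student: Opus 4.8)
The plan is to argue by contradiction, building a minimal counterexample within the class of genus-at-most-$\gamma$ graphs and then feeding it into the reducible-set machinery already developed for Theorem~\ref{thm:mainthm}. The crucial observation is that the threshold $12(\gamma-1)$ appearing in the hypothesis is exactly the point at which Proposition~\ref{prop:euler} forces the average degree below $7$: a graph of genus at most $\gamma$ on $n > 12(\gamma-1)$ vertices has average degree at most $6 + \frac{12(\gamma-1)}{n} < 7$. Thus the hypothesis is tailored to cover precisely the ``small'' graphs whose average degree is not automatically controlled, while Euler's formula handles the rest.

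Concretely, suppose the conclusion fails and choose a counterexample $G$ of genus at most $\gamma$ minimizing $\sizeof{V(G)}$ and, subject to that, $\sizeof{E(G)}$. Since genus does not increase under deletion of vertices or edges, every proper subgraph of $G$ again has genus at most $\gamma$; by the minimality of $G$ (first in vertices, then in edges), every such subgraph satisfies Tuza's~Conjecture. Hence $G$ is a minimal counterexample in the sense required by Lemma~\ref{lem:robust}, so $G$ is robust. Next I would invoke the hypothesis: since it covers every graph of genus at most $\gamma$ on at most $12(\gamma-1)$ vertices, the counterexample $G$ must satisfy $\sizeof{V(G)} > 12(\gamma-1)$, whence Proposition~\ref{prop:euler} gives that $G$ has average degree less than $7$. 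Applying Lemma~\ref{lem:has-reducible} to the robust graph $G$ then produces a reducible set $V_0$, reducible using some $\sey$ and $X$. Finally, $G' = (G-X)-V_0$ is a proper subgraph of $G$ (it omits the nonempty set $V_0$), hence has genus at most $\gamma$ and fewer vertices, so $\tau(G') \leq 2\nu(G')$ by minimality; Lemma~\ref{lem:reducible-wins} then yields $\tau(G) \leq 2\nu(G)$, contradicting the choice of $G$.

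The step requiring the most care is the passage from ``minimal counterexample within the genus-at-most-$\gamma$ class'' to ``minimal counterexample'' as Lemma~\ref{lem:robust} demands, namely that \emph{every} proper subgraph of $G$ -- including spanning subgraphs obtained by deleting a single edge -- satisfies the conjecture. This is exactly what forces the two-stage (vertices, then edges) choice of $G$ and what relies on the subgraph-monotonicity of genus. I also want to stress the one respect in which this argument is genuinely lighter than the proof of Theorem~\ref{thm:mainthm}: we never need $\mad(G) < 7$, only that the average degree of $G$ itself drops below $7$, since Lemma~\ref{lem:has-reducible} is stated in terms of ordinary average degree. Dense small subgraphs of $G$ are therefore harmless, and the entire argument goes through on the strength of the single Euler bound above.
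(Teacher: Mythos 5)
Your proposal is correct and takes essentially the same route as the paper: a minimal counterexample of genus at most $\gamma$ must by hypothesis have more than $12(\gamma-1)$ vertices, hence average degree below $7$ by Proposition~\ref{prop:euler}, and then Lemma~\ref{lem:robust}, Lemma~\ref{lem:has-reducible}, and Lemma~\ref{lem:reducible-wins} combine to contradict its being a counterexample. Your two-stage (vertices, then edges) minimization merely makes explicit the subgraph-minimality that the paper's phrase ``minimal counterexample'' already presumes, and your observation that only the average degree of $G$ itself (not $\mad$) is needed matches the paper's use of Lemma~\ref{lem:has-reducible} exactly.
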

\begin{proof}
  Suppose not; let $G$ be a minimal counterexample among the graphs of
  genus at most $\gamma$. All proper subgraphs $G'$ also have genus at
  most $\gamma$, so they satisfy $\tau(G') \leq 2\nu(G')$, by the
  minimality of $G$.  By hypothesis, $\sizeof{V(G)} > 12(\gamma-1)$,
  so $G$ has average degree less than $7$. By Lemma~\ref{lem:robust},
  $G$ is robust, so by Lemma~\ref{lem:has-reducible}, $G$ has a
  reducible set.  Thus $\tau(G) \leq 2\nu(G)$, by
  Lemma~\ref{lem:reducible-wins}, contradicting the choice of $G$ as a
  counterexample.
\end{proof}
For the case $\gamma=2$, we performed an exhaustive computer search to
verify the hypothesis of Proposition~\ref{prop:finite}. By using
Lemma~\ref{lem:redlem}, we avoid explicitly checking Tuza's~Conjecture
on graphs that can be shown to have reducible sets. Using the
isomorph-free generation program \texttt{geng}~\cite{Nauty} with a
custom pruning function designed to recognize forbidden configurations
(a) and (b) in Lemma~\ref{lem:redlem}, we identified a set of only
$5299$ graphs that contains any smallest counterexample of genus $2$.
(A database of these graphs, and tools for verifying the database,
can be found at \url{http://www.math.uiuc.edu/~puleo/tuzaverify.tar.gz})
For higher $\gamma$, this computational approach quickly becomes
intractible, even with such optimizations.

Krivelevich~\cite{Krivelevich} proved Tuza's~Conjecture for graphs
with no $K_{3,3}$-subdivision. We obtain the same result using
Theorem~\ref{thm:mainthm}. Our proof relies on a theorem of
Wagner~(\cite{Wagner}, described in \cite{Thomas}).
\begin{definition}
  Let $G_1$ and $G_2$ be graphs. A \emph{$k$-sum} of $G_1$ and $G_2$ is any
  graph obtained by identifying the vertices of a $k$-clique in $G_1$ with
  a $k$-clique in $G_2$ and then possibly deleting some edges of the merged
  $k$-clique. (In particular, a $0$-sum is a disjoint union.)
\end{definition}
The following theorem was originally stated for graphs having no $K_{3,3}$-minor,
rather than graphs having no $K_{3,3}$-subdivision. However, since $\Delta(K_{3,3}) \leq 3$,
a graph is $K_{3,3}$-minor-free if and only if it is $K_{3,3}$-subdivision-free (see Proposition~1.7.4 of \cite{diestel}).
Thus, we restate the theorem in terms of subdivisions.
\begin{theorem}[Wagner~\cite{Wagner}] 
  Any graph with no $K_{3,3}$-subdivision can be obtained by a sequence of
  $0$-,$1$-,or $2$-sums starting from planar graphs and $K_5$.
\end{theorem}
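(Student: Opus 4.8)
The plan is to prove the equivalent statement for minors, since the paper has already observed that $K_{3,3}$-subdivision-freeness coincides with $K_{3,3}$-minor-freeness, and minors interact cleanly with the clique-sum operation. I would establish both implications: that every graph assembled from planar graphs and $K_5$ by $0$-, $1$-, and $2$-sums has no $K_{3,3}$-minor, and conversely that every $K_{3,3}$-minor-free graph admits such an assembly.

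For the easy forward direction, I would first record that each building block is $K_{3,3}$-minor-free: planar graphs by Kuratowski's theorem, and $K_5$ because it has only five vertices while $K_{3,3}$ has six. Then I would show that $0$-, $1$-, and $2$-sums preserve $K_{3,3}$-minor-freeness. The key point is that $K_{3,3}$ is $3$-connected, so in a clique-sum $G = G_1 \oplus G_2$ along a clique of size at most $2$, any branch-set model of $K_{3,3}$ in $G$ must lie entirely within one of the two sides: the shared clique is a vertex cut of size at most $2$ separating $V(G_1) \setminus V(G_2)$ from $V(G_2) \setminus V(G_1)$, and the model of a $3$-connected graph cannot be split by so small a cut. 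Hence a $K_{3,3}$-minor in $G$ would yield one in $G_1$ or $G_2$, contradicting the inductive hypothesis.

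For the hard reverse direction, I would induct on $\sizeof{V(G)}$ and peel off low-order separations. If $G$ is disconnected, decompose it as a $0$-sum of its components; if $G$ has a cutvertex, decompose it as a $1$-sum; and if $G$ has a $2$-cut $\{u,v\}$, split $G$ along $\{u,v\}$ and add the edge $uv$ to each side to form smaller graphs $G_1,G_2$ with $G = G_1 \oplus G_2$ a $2$-sum. In each case one must check that the summands remain $K_{3,3}$-minor-free so that the induction applies; for the $2$-sum this amounts to verifying that adding the virtual edge $uv$ creates no $K_{3,3}$-minor, which follows because an edge joining the two attachment vertices of one side can always be routed through a path in the other side, so any $K_{3,3}$-minor exploiting the virtual edge would already be present in $G$.

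This reduces everything to the \textbf{main obstacle}: showing that a $3$-connected $K_{3,3}$-minor-free graph is either planar or isomorphic to $K_5$. Here I would invoke the Kuratowski--Wagner characterization of planarity, namely that a graph is nonplanar if and only if it has a $K_5$- or $K_{3,3}$-minor. If such a $G$ is planar we are done; otherwise $G$ is nonplanar and $K_{3,3}$-minor-free, so it must contain a $K_5$-minor. The crux is then to argue that a $3$-connected graph with a $K_5$-minor but no $K_{3,3}$-minor can only be $K_5$ itself: beginning from the five branch sets of a $K_5$-model, any extra vertex or any branch set of size at least $2$ could, using $3$-connectivity to supply three internally disjoint paths to the model, be recombined into a $K_{3,3}$-model, a contradiction. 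I expect this final recombination argument --- carefully showing that any $3$-connected proper ``inflation'' of $K_5$ already contains a $K_{3,3}$-minor --- to be the technically delicate part of the proof.
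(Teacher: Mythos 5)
The paper does not prove this statement at all: it is imported as a known classical result, cited to Wagner with \cite{Thomas} as an exposition, so your proposal cannot match or diverge from an internal proof --- it has to stand on its own. Its architecture is sound and is in fact the standard textbook route: pass from subdivisions to minors (legitimate, as the paper itself notes, because $\Delta(K_{3,3}) = 3$), then induct on $\sizeof{V(G)}$, peeling off $0$-, $1$-, and $2$-separations. Your handling of the $2$-cut is right, including the two points that actually need checking: the virtual edge $uv$ on one side can be simulated by a $u$--$v$ path through the interior of the other side (this uses $2$-connectedness, which you have because cutvertices are processed first), and a model of the $3$-connected graph $K_{3,3}$ cannot straddle a separation of order at most $2$, so clique-sums of order at most $2$ preserve $K_{3,3}$-minor-freeness. (Your ``forward direction'' is not part of the statement as given, but it is correct and harmless.)

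The genuine gap is exactly where you flag it: the claim that a $3$-connected graph with a $K_5$-minor but no $K_{3,3}$-minor must be $K_5$ itself. This is D.~W.~Hall's theorem (every $3$-connected graph on at least six vertices containing a subdivision of $K_5$ contains a subdivision of $K_{3,3}$), and once the decomposition scaffolding is in place it carries the entire content of Wagner's theorem. Your one-sentence sketch --- three internally disjoint paths from an extra vertex or an oversized branch set, ``recombined'' into a $K_{3,3}$-model --- is not yet an argument: whether recombination succeeds depends on where the three paths attach (all into a single branch set, into the interior of one subdivided edge, into distinct branch vertices, and so on), and the different cases produce the $K_{3,3}$ by genuinely different reroutings, so a real case analysis is unavoidable. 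A cautionary example shows the construction cannot be local to the model: $K_5$ with one edge subdivided contains no $K_{3,3}$-subdivision (the subdivision vertex has degree $2$ and so cannot be a branch vertex, and only six vertices are available), so $3$-connectivity must be exploited globally, via paths leaving and re-entering the model, before any recombination can begin. As it stands, then, your proposal is a correct and standard skeleton whose load-bearing lemma is asserted rather than proved; completing it means either proving Hall's lemma in full or citing it, at which point your write-up becomes precisely the classical proof of the result this paper takes as a black box.
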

\begin{corollary}\label{cor:3n5}
  If $G$ is a graph with $n$ vertices and no $K_{3,3}$-subdivision, then $\sizeof{E(G)} \leq 3n-5$.
\end{corollary}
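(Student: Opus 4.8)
The plan is to prove the edge bound by structural induction, using Wagner's Theorem to reduce to the building blocks. By that theorem, $G$ is obtained from a collection of planar graphs and copies of $K_5$ by a sequence of $0$-, $1$-, and $2$-sums, so I would induct on the number of sum operations in such a construction, proving the bound for every graph that arises along the way. For the base case (no sums), $G$ is either a single planar graph or $K_5$. If $G = K_5$ then $\sizeof{E(G)} = 10 = 3\cdot 5 - 5$, so the bound holds with equality. If $G$ is planar with $n \geq 3$, then Proposition~\ref{prop:euler} with $\gamma = 0$ gives $\sizeof{E(G)} \leq 3n - 6 \leq 3n - 5$; the small planar cases $n \in \{1,2\}$ are checked by hand (e.g.\ $K_2$ gives $1 = 3\cdot 2 - 5$).

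For the inductive step, I would write $G$ as a $k$-sum of graphs $G_1$ and $G_2$ (with $k \in \{0,1,2\}$) appearing earlier in the construction, so that the inductive hypothesis applies to each $G_i$. Writing $n_i = \sizeof{V(G_i)}$ and $m_i = \sizeof{E(G_i)}$, the $k$-sum has $n = n_1 + n_2 - k$ vertices. The only edges of $G$ not simply inherited from $G_1$ or $G_2$ are those of the identified $k$-clique, which are counted once in each $G_i$ but survive at most once in $G$ (some may even be deleted); hence $m \leq m_1 + m_2 - \binom{k}{2}$. Substituting $m_i \leq 3n_i - 5$ (valid when both $n_i \geq 2$) and $n_1 + n_2 = n + k$ gives $m \leq 3(n+k) - 10 - \binom{k}{2}$, which equals $3n-10$, $3n-7$, and $3n-5$ for $k = 0, 1, 2$ respectively. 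In every case $m \leq 3n - 5$, so the bound propagates through all three sum types, the $2$-sum being the tight case (consistent with $K_5$ and with $2$-sums of copies of $K_5$ achieving equality).

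The only genuine subtlety, and the main obstacle to a perfectly clean statement, is the degenerate single-vertex case: for $n = 1$ the claimed bound reads $0 \leq -2$ and fails, so I would run the induction for $n \geq 2$ while recording separately that $K_1$ has no edges. I must then confirm that $K_1$ summands cannot derail the argument, since the substitution above assumed $n_i \geq 2$. A $2$-sum identifies an edge, so its summands always have at least two vertices and never equal $K_1$; a $K_1$ appearing in a $0$-sum merely adjoins an isolated vertex, giving $m \leq 3(n-1) - 5 = 3n - 8$ with slack to spare; and a $K_1$ used in a $1$-sum leaves $G$ unchanged, so $m = m_1 \leq 3n - 5$ directly. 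Thus the (false) bound for $K_1$ is never needed. Beyond this bookkeeping the argument is routine arithmetic, and since $m \leq 3n-5$ immediately yields average degree $2m/n \leq 6 - 10/n < 6$, the corollary suffices to establish $\mad(G) < 7$ in the intended application.
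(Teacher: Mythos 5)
Your proposal is correct and follows essentially the same route as the paper: both invoke Wagner's Theorem, induct on the sum decomposition, and use the bound $\sizeof{E(G)} \leq \sizeof{E(G_1)} + \sizeof{E(G_2)} - \binom{j}{2}$ together with $3n_i - 5$ for the summands, with identical arithmetic. The only difference is that you explicitly patch the degenerate $K_1$ case (where $3n-5$ is negative), a point the paper's proof passes over in silence; this is sound bookkeeping but not a different argument.
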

\begin{proof}
  If $G$ is planar or $G = K_5$, then the conclusion holds. It
  suffices to show that if $G_1$ and $G_2$ are graphs satisfying the
  bound, then any $0$-, $1$-, or $2$-sum of $G_1$ and $G_2$ also
  satisfies the bound. This follows by straightforward algebra.  In
  particular, for a $j$-sum of $G_1$ and $G_2$ with $j \in \{0,1,2\}$
  and $n_i = \sizeof{V(G_i)}$,
  \begin{align*}
    \sizeof{E(G)} &\leq \sizeof{E(G_1)} + \sizeof{E(G_2)} - {j \choose 2} \\
    &\leq 3n_1 + 3n_2 - 10 - {j \choose 2} \\
    &= 3n-5 + \left(3j - {j \choose 2} - 5\right) \leq 3n-5.
  \end{align*}
  (Recall that ${0 \choose 2} = {1 \choose 2} = 0$.)
\end{proof}
\begin{theorem}[Krivelevich]\label{thm:k33}
  If $G$ is a graph with no $K_{3,3}$-subdivision, then $\tau(G) \leq 2\nu(G)$.
\end{theorem}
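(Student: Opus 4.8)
The plan is to apply Theorem~\ref{thm:mainthm} directly, by showing that every graph with no $K_{3,3}$-subdivision has maximum average degree below $7$. The crucial observation is that the property of containing no $K_{3,3}$-subdivision is inherited by subgraphs: if some $H \subset G$ contained a $K_{3,3}$-subdivision, then so would $G$. Consequently Corollary~\ref{cor:3n5} applies not only to $G$ itself but to every subgraph $H$, giving $\sizeof{E(H)} \leq 3\sizeof{V(H)} - 5$ for all $H \subset G$.

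First I would fix an arbitrary subgraph $H \subset G$ and invoke Corollary~\ref{cor:3n5} to bound its edge count. Dividing through by $\sizeof{V(H)}$, the average degree of $H$ satisfies
\[ \frac{2\sizeof{E(H)}}{\sizeof{V(H)}} \leq \frac{2(3\sizeof{V(H)} - 5)}{\sizeof{V(H)}} = 6 - \frac{10}{\sizeof{V(H)}} < 6. \]
Since this holds uniformly over all subgraphs $H$, taking the maximum in the definition of $\mad$ yields $\mad(G) < 6 < 7$. I would then apply Theorem~\ref{thm:mainthm} to conclude that $\tau(G) \leq 2\nu(G)$.

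The argument is essentially immediate once Corollary~\ref{cor:3n5} is available, so there is no substantive obstacle. The only point requiring a moment's care is the hereditary nature of the $K_{3,3}$-subdivision-free property, which is what licenses applying the edge bound to every subgraph rather than only to $G$; this is exactly what is needed to control $\mad$, since $\mad$ is a maximum over all subgraphs.
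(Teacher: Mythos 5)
Your proof is correct and is essentially identical to the paper's: the paper likewise notes that all subgraphs of $G$ are $K_{3,3}$-subdivision-free, applies Corollary~\ref{cor:3n5} to conclude $\mad(G) < 6$, and then invokes Theorem~\ref{thm:mainthm}. Your version simply spells out the arithmetic that the paper leaves implicit.
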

\begin{proof}
  Since all subgraphs of $G$ also have no $K_{3,3}$-subdivision,
  Corollary~\ref{cor:3n5} implies $\mad(G) < 6$.
\end{proof}
Aparna~Lakshmanan, Bujt\'as, and Tuza~\cite{LBT} proved that if $G$ is
$4$-colorable, then $\tau(G) \leq 2\nu(G)$. This implies that
Tuza's~Conjecture holds for all graphs with no $K_5$-minor, since (as
Wagner~\cite{Wagner} showed) the Four-Color~Theorem implies that all
graphs with no $K_5$-minor are $4$-colorable. Using a theorem of
Mader~\cite{Mader} together with Theorem~\ref{thm:mainthm}, we instead
obtain the result for graphs with no $K_5$-subdivision:
\begin{theorem}\label{thm:k5sub}
  If $G$ is a graph with no $K_5$-subdivision, then $\tau(G) \leq 2\nu(G)$.
\end{theorem}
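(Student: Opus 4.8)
The plan is to follow exactly the template of Theorem~\ref{thm:k33}: I would reduce the statement to the sparsity hypothesis of Theorem~\ref{thm:mainthm} by establishing a linear bound on the number of edges of any $K_5$-subdivision-free graph, and then invoke Theorem~\ref{thm:mainthm}. Thus the whole argument hinges on producing a constant $c < 7/2$ such that every $n$-vertex graph with no $K_5$-subdivision has at most $cn + O(1)$ edges.

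First I would record the relevant edge bound, stated as the analogue of Corollary~\ref{cor:3n5}. This is exactly the content of the theorem of Mader cited above: every $n$-vertex graph with no $K_5$-subdivision has at most $3n - 6$ edges (for $n \geq 5$; equivalently, $3n - 5$ edges force a $K_5$-subdivision). Next, I would observe that the class of $K_5$-subdivision-free graphs is closed under taking subgraphs, since if $H \subset G$ contained a $K_5$-subdivision then so would $G$. Hence Mader's bound applies to every subgraph $H$ of $G$ with $\sizeof{V(H)} \geq 5$, giving $2\sizeof{E(H)}/\sizeof{V(H)} \leq 6 - 12/\sizeof{V(H)} < 6$. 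For the finitely many subgraphs $H$ with $\sizeof{V(H)} \leq 4$ the average degree is at most $3$, so in all cases $2\sizeof{E(H)}/\sizeof{V(H)} < 7$. Taking the maximum over all $H \subset G$ yields $\mad(G) < 7$, and Theorem~\ref{thm:mainthm} then gives $\tau(G) \leq 2\nu(G)$.

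The argument is routine once Mader's bound is in hand, so there is no real combinatorial obstacle in the proof itself. The one genuinely important point—and the reason a new ingredient beyond the $K_{3,3}$ argument is required—is that for $K_5$ the notions of subdivision and minor diverge (since $\Delta(K_5) = 4 > 3$), so one cannot simply appeal to a Wagner-type structure theorem and the planar edge bound $3n - 6$ as in Corollary~\ref{cor:3n5}. A graph may contain a $K_5$-minor while being $K_5$-subdivision-free, so the edge bound in the topological setting is a strictly stronger statement than in the minor setting, and securing it is precisely where Mader's theorem is needed rather than a structural decomposition of the $K_{3,3}$ type.
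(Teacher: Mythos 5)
Your proposal is correct and takes essentially the same route as the paper: both rest entirely on Mader's bound of $3\sizeof{V(G)}-6$ edges for $K_5$-subdivision-free graphs, use closure of the class under subgraphs to conclude $\mad(G)<6$ (you settle for $\mad(G)<7$ after handling subgraphs on at most four vertices separately, which is harmless), and then invoke Theorem~\ref{thm:mainthm}. Your closing remark about why subdivisions and minors diverge for $K_5$, so that a Wagner-type decomposition as in Corollary~\ref{cor:3n5} is unavailable and Mader's theorem is the needed ingredient, matches the paper's own framing of the result.
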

\begin{proof}
  Since $G$ has no $K_5$-subdivision, the number of edges in $G$ is at
  most $3\sizeof{V(G)}-6$, as proved by Mader~\cite{Mader}.  All
  subgraphs of $G$ are $K_5$-subdivision-free, so $\mad(G) < 6$.
\end{proof}
Tuza's~Conjecture was not previously known to hold for graphs with
a $K_5$-minor but no $K_5$-subdivision.
\section{Weak K\"onig--Egerv\'ary Graphs}\label{sec:wke}
A graph $H$ is a \emph{K\"onig--Egerv\'ary graph} if $\alpha'(H) =
\beta(H)$, where $\alpha'(H)$ is the matching number and $\beta(H)$ is
the vertex cover number. The concept was introduced by Deming~\cite{Deming}; see
also Kayll~\cite{Kayll}. Let $\ke$ denote the class of
K\"onig--Egerv\'ary graphs. The K\"onig--Egerv\'ary Theorem~\cite{egervary,konig} says that
if $H$ is bipartite, then \iske{H}. We weaken this definition, obtaining
a larger class of graphs which will help streamline our reducibility proofs.
\begin{definition}\label{def:wke}
  The graph $H$ is a \emph{weak K\"onig--Egerv\'ary graph} if
  $H$ has a matching $M$ and a vertex set $Q \subset V(H)$ such that
  $\sizeof{Q} \leq \sizeof{M}$ and $Q$ is a vertex cover in $H-M$.
  Let $\wke$ denote the class of weak K\"onig--Egerv\'ary graphs.
  We say that a pair $(M,Q)$ as above \emph{witnesses} \iswke{H}.
\end{definition}
Observe that $\ke \subset \wke$: if \iske{H}, then $(M,Q)$
witnesses \iswke{H}, where $M$ is any maximum matching and $Q$ is any
minimum vertex cover in $H$.

To relate weak K\"onig--Egerv\'ary graphs to reducible sets, we
introduce an edge version of reducibility. The intuitive explanation
is similar to the explanation of vertex reducibility: to say that the
edge set $E_0$ is reducible is to say that we can remove $E_0$, solve
the global problem in the resulting subgraph, and then combine the
global solution for the subgraph with a carefully-chosen local
solution in order to obtain a global solution for the original graph.
\begin{definition}\label{def:edge-reducible}
  A nonempty edge set $E_0 \subset E(G)$ is \emph{reducible} if there
  exist a set $\sey$ of edge-disjoint triangles and a set $X$ of edges
  of $G$ such that the following conditions hold:
  \begin{enumerate}[(i)]
  \item $\sizeof{X} \leq 2\sizeof{\sey}$; and 
  \item $G-X$ has no triangle containing an edge of $E_0$; and
  \item $X$ contains every $\sey$-edge that is not in $E_0$.
  \end{enumerate}
  When $E_0$, $\sey$, and $X$ satisfy the definition above, we say
  that $E_0$ is \emph{reducible using $\sey$ and $X$}.
\end{definition}
An analogue of Lemma~\ref{lem:reducible-wins} holds for reducible edge sets.
The proof is essentially the same, so we do not repeat it here:
\begin{lemma}\label{lem:edge-reducible}
  Let $G$ be a graph, and let $E_0 \subset E(G)$ be reducible using
  $\sey$ and $X$. Let $G' = (G - X) - E_0$.  If $\tau(G') \leq
  2\nu(G')$, then $\tau(G) \leq 2\nu(G)$.
\end{lemma}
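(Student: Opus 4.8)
The plan is to mirror the proof of Lemma~\ref{lem:reducible-wins}, replacing the vertex set $V_0$ by the edge set $E_0$ throughout. First I would take $\tee'$ to be a largest set of edge-disjoint triangles in $G'$ and $Y'$ a smallest edge set with $G' - Y'$ triangle-free; the hypothesis $\tau(G') \leq 2\nu(G')$ then gives $\sizeof{Y'} \leq 2\sizeof{\tee'}$. I would set $\tee = \tee' \cup \sey$ and $Y = Y' \cup X$, and the goal is to verify the three conditions: that $\tee$ is a set of edge-disjoint triangles in $G$, that $G - Y$ is triangle-free, and that $\sizeof{Y} \leq 2\sizeof{\tee}$. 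The cardinality inequality is immediate, since $\sizeof{Y} \leq \sizeof{Y'} + \sizeof{X} \leq 2\sizeof{\tee'} + 2\sizeof{\sey} = 2\sizeof{\tee}$, where the final equality uses that $\tee'$ and $\sey$ share no triangle (a consequence of the edge-disjointness established next).

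The key step, and the main obstacle, is showing that the triangles of $\tee$ are pairwise edge-disjoint. As $\tee'$ and $\sey$ are each internally edge-disjoint, it suffices to show that no $\sey$-edge is a $\tee'$-edge. Here I would use that every $\tee'$-edge lies in $G' = (G - X) - E_0$ and hence lies neither in $X$ nor in $E_0$, whereas Condition~(iii) of Definition~\ref{def:edge-reducible} forces every $\sey$-edge to lie in $E_0$ or in $X$. This is precisely where the edge version of Condition~(iii) plays the role that the vertex version played in Lemma~\ref{lem:reducible-wins}, and it is the reason the definition of a reducible edge set is tailored as it is.

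Finally, to show $G - Y$ is triangle-free, I would classify each triangle $T$ of $G$ into one of three (possibly overlapping) types: $T$ is contained in $G' = (G-X)-E_0$; $T$ contains an edge of $E_0$; or $T$ contains an edge of $X$. These exhaust all triangles of $G$, since a triangle avoiding both $E_0$ and $X$ lies entirely in $G'$. Triangles of the first type meet $Y'$ by the hypothesis on $G'$; triangles of the third type meet $X \subset Y$ by definition; and triangles of the second type meet $X$ via Condition~(ii), which guarantees that $G - X$ has no triangle through an edge of $E_0$, so any such $T$ must use an edge of $X$. Since every triangle of $G$ therefore meets $Y$, the set $G - Y$ is triangle-free, completing the argument. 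Because each of these steps is the verbatim edge-analogue of a step in Lemma~\ref{lem:reducible-wins}, I expect no genuinely new difficulty to arise.
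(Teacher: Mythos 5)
Your proof is correct and matches the paper's intent exactly: the paper omits the proof of Lemma~\ref{lem:edge-reducible}, stating only that it is ``essentially the same'' as that of Lemma~\ref{lem:reducible-wins}, and your argument is precisely that edge-analogue, with Condition~(iii) of Definition~\ref{def:edge-reducible} correctly supplying the key fact that every $\sey$-edge lies in $E_0$ or $X$ while every $\tee'$-edge avoids both.
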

\begin{lemma}\label{lem:wke}
  Let $v \in V(G)$, and let $G_0$ be a nonempty union of components of
  $G[N(v)]$. If \iswke{G_0}, then $G$ has a reducible set of
  edges. Also, if \iswke{G[N(v)]}, then $\{v\}$ is reducible.
\end{lemma}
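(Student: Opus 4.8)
The plan is to build the required triangle set $\sey$ and edge set $X$ directly from a weak König--Egerv\'ary witness $(M,Q)$ for $G_0$ (respectively for $G[N(v)]$), exploiting the correspondence between edges of $G[N(v)]$ and triangles of $G$ through $v$: each edge $xy$ with $x,y \in N(v)$ yields a triangle $vxy$ in $G$.

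First I would set $\sey = \{vxy \st xy \in M\}$. Because $M$ is a matching, distinct edges $xy, x'y' \in M$ are vertex-disjoint, so the triangles $vxy$ and $vx'y'$ share only the vertex $v$ and no edge; hence $\sey$ is a set of edge-disjoint triangles with $\sizeof{\sey} = \sizeof{M}$. I would then set $X = M \cup \{vq \st q \in Q\}$, so that $\sizeof{X} \leq \sizeof{M} + \sizeof{Q} \leq 2\sizeof{M} = 2\sizeof{\sey}$ by the defining inequality $\sizeof{Q} \leq \sizeof{M}$; this gives condition (i) in both relevant definitions.

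For the first part I would take $E_0 = \{vw \st w \in V(G_0)\}$, which is nonempty since $G_0$ is a nonempty union of components. The $\sey$-edges are the spokes $vx, vy$ (which lie in $E_0$) together with the matching edges $xy$ (which do not), so the $\sey$-edges outside $E_0$ form exactly $M \subset X$, giving condition (iii). For condition (ii), any triangle through a spoke $vw$ has the form $vwz$ with $z$ adjacent to $w$ in $G[N(v)]$; because $G_0$ is a union of \emph{whole} components and $w \in V(G_0)$, the third vertex $z$ also lies in $V(G_0)$, so $wz$ is an edge of $G_0$. If $wz \in M$ then $wz \in X$; otherwise $wz$ is an edge of $G_0 - M$, hence covered by $Q$, so some endpoint $q \in \{w,z\}$ lies in $Q$ and the spoke $vq \in X$ meets the triangle. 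Thus $G-X$ has no triangle meeting $E_0$, and $E_0$ is reducible.

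The second part is the same construction with $V_0 = \{v\}$ and the witness for $G[N(v)]$: the triangles through $v$ are exactly the $vwz$ with $wz \in E(G[N(v)])$, and the identical matching/cover dichotomy shows each is met by $X$, giving condition (ii) of Definition~\ref{def:reducible}; the $\sey$-edges with both endpoints outside $\{v\}$ are precisely the matching edges, again contained in $X$, giving condition (iii). The only real subtlety, and the step I would be most careful about, is the ``whole components'' argument in the first part: it is exactly what guarantees that every triangle through a spoke edge $vw$ closes up inside $G_0$, so that the covering power of $(M,Q)$ in $G_0$ suffices. Everything else is the routine count $\sizeof{X} \leq 2\sizeof{\sey}$ and the bookkeeping of which $\sey$-edges land in $E_0$ or touch $v$.
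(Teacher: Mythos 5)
Your proof is correct and is essentially identical to the paper's: the same $\sey = \{vxy \st xy \in M\}$, $X = M \cup \{vq \st q \in Q\}$, and $E_0 = \{vw \st w \in V(G_0)\}$, with the same verification of conditions (i)--(iii). The only cosmetic difference is that you spell out the ``whole components'' argument for condition (ii) and re-verify the vertex-reducibility conditions directly for the second claim, whereas the paper notes that when $G_0 = G[N(v)]$ the edge-reducibility of $E_0$ is definitionally equivalent to the reducibility of $\{v\}$.
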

\begin{proof}
  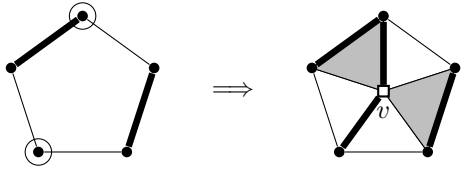
\begin{figure}
    \centering
    \begin{tikzpicture}
      \begin{scope}[xshift=-2cm, rotate=90]
        \apoint{} (a1) at (0: 1cm) {};
        \apoint{} (a2) at (72: 1cm) {};
        \apoint{} (a3) at (144: 1cm) {};
        \apoint{} (a4) at (216: 1cm) {};
        \apoint{} (a5) at (288: 1cm) {};
        \begin{edges}
        \draw (a1) -- (a2) -- (a3) -- (a4) -- (a5) -- (a1);
        \draw[medge] (a1) -- (a2);
        \draw[medge] (a4) -- (a5);
        \qvert{a1};
        \qvert{a3};
        \end{edges}
      \end{scope}
      \node at (0cm, 0cm) {$\Longrightarrow$};
       \begin{scope}[xshift=2cm, rotate=90]
         \bvoint{v} (v) at (0cm, 0cm) {};
         \apoint{} (b1) at (0: 1cm) {};
         \apoint{} (b2) at (72: 1cm) {};
         \apoint{} (b3) at (144: 1cm) {};
         \apoint{} (b4) at (216: 1cm) {};
         \apoint{} (b5) at (288: 1cm) {};
         \begin{edges}
         \foreach \i in {1,2,3,4,5}
         {
           \draw (v) -- (b\i);
         }
        \draw (b1) -- (b2) -- (b3) -- (b4) -- (b5) -- (b1);
        \draw[triangle1] (b1.center) -- (b2.center) -- (v.center) -- cycle;
        \draw[triangle1] (b4.center) -- (b5.center) -- (v.center) -- cycle;
        \draw[xedge] (v) -- (b3);
        \draw[xedge] (v) -- (b1);
        \draw[xedge] (b1) -- (b2);
        \draw[xedge] (b4) -- (b5);           
         \end{edges}
      \end{scope}
    \end{tikzpicture}
    \caption{Transforming $(M,Q)$ to $(\sey,X)$.}
    \label{fig:wke}
  \end{figure}
  Take any pair $(M,Q)$ witnessing \iswke{G_0}. Define $E_0$, $\sey$, and $X$ as follows:
  \begin{align*}
    E_0 &= \{ vw \st w \in G_0 \}; \\
    \sey &= \{vuw \st uw \in M\}; \\
    X &= M \cup \{vx \st x \in Q\}.
  \end{align*}
  Figure~\ref{fig:wke} illustrates the definition of $\sey$ and $X$;
  in the figure, thick edges represent $M$ and $X$, circled vertices
  represent $Q$, and shaded triangles represent $\sey$.

  Since $M$ is a matching, the triangles in $\sey$ are pairwise
  edge-disjoint. We claim that $E_0$ is reducible using $\sey$ and
  $X$. Verifying each condition of Definition~\ref{def:edge-reducible}
  in turn:
  \begin{enumerate}[(i)]
  \item Clearly $\sizeof{X} \leq 2\sizeof{\sey}$, since $\sizeof{Q} \leq \sizeof{M}$.
  \item Any triangle of $G$ containing an edge of $E_0$ has the form
    $vxy$, where $xy \in E(G_0)$.  Since $Q$ is a vertex cover in
    $G_0-M$, either $xy \in M$ or one of its endpoints lies in
    $Q$. Thus $G-X$ has no such triangle.
  \item $X$ contains every $\sey$-edge that does not contain an edge
    of $E_0$, since $M \subset X$.
  \end{enumerate}
  For the second claim, we observe that when $E_0$ is defined as above
  and $G_0 = G[N(v)]$, the condition $e \in E_0$ is equivalent to the
  condition $v \in e$. Comparing Definition~\ref{def:edge-reducible}
  to Definition~\ref{def:reducible} , this shows that reducibility of
  $E_0$ is equivalent to reducibility of $\{v\}$.
\end{proof}
Since all bipartite graphs are weak K\"onig--Egerv\'ary graphs,
Lemma~\ref{lem:wke} extends a theorem of Aparna~Lakshmanan, Bujt\'as,
and Tuza~\cite{LBT}, who proved that if $G$ is odd-wheel-free (i.e.,
locally bipartite) then Tuza's Conjecture holds for $G$. We prove a
more precise extension of this result later in the section.

In the remainder of this section, we seek sufficient conditions for a
graph to be a weak K\"onig--Egerv\'ary~graph. Due to
Lemma~\ref{lem:wke}, these conditions yield restrictions on the vertex
neighborhoods in a minimum counterexample to Tuza's~Conjecture.

The first such result is an analogue of the
K\"onig--Egerv\'ary~Theorem: if $H$ has no odd cycle of length
greater than $3$, then \iswke{H}.  The proof relies on a
characterization of such graphs due to Hsu,
Ikura, and Nemhauser~\cite{HIN}.
\begin{theorem}[Hsu--Ikura--Nemhauser~\cite{HIN}]\label{thm:HIN}
  If $H$ is $2$-connected and has no odd cycle of length greater than
  $3$, then $H$ is either bipartite, isomorphic to $K_4$, or
  isomorphic to $K_2 \join \overline{K_r}$ for some $r \geq 1$.
\end{theorem}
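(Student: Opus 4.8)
The plan is to work directly from the hypothesis. Assume $H$ is $2$-connected with no odd cycle of length greater than $3$. If $H$ is bipartite we are in the first case and done, so assume $H$ is non-bipartite. Then $H$ has an odd cycle, and since every odd cycle has length exactly $3$, $H$ contains a triangle. The goal is to show $H \cong K_4$ or $H \cong K_2 \join \comp{K_r}$. The heart of the argument is the claim that \emph{any two triangles of $H$ share an edge}; once this is established, a short case analysis determines the global structure.

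First I would prove that no two triangles are vertex-disjoint. Given vertex-disjoint triangles $abc$ and $def$, $2$-connectivity (via Menger's theorem, in the form ``two vertex-disjoint paths between two disjoint vertex sets'') yields disjoint paths joining them, say from $a$ to $d$ and from $b$ to $e$. Routing a cycle through these two paths together with the edge $de$ gives a cycle whose length can be altered by one by detouring through $f$; one of the two resulting cycles is then odd and of length at least $4$, a contradiction. Next I would rule out two triangles meeting in exactly one vertex. If $abc$ and $ade$ meet only at $a$, a shortest path $P$ between $\{b,c\}$ and $\{d,e\}$ in the connected graph $H-a$ yields two cycles whose lengths differ by one (using the edge $ab$ versus the path $a\,c\,b$); unless $P$ has length $1$ this forces an odd cycle of length at least $5$. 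If $P$ has length $1$, say $bd \in E$, then $c\,b\,d\,e\,a$ is an explicit odd $5$-cycle. Either way we reach a contradiction, so any two triangles share at least an edge.

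With pairwise edge-sharing in hand, I would split into two cases. If no single edge lies in every triangle, pick two triangles $abc$ and $abd$ sharing an edge $ab$ and a triangle avoiding $ab$; the shared-edge condition then forces $cd \in E$, so $\{a,b,c,d\}$ induces $K_4$. To upgrade this to $H = K_4$, suppose a vertex $v$ lies outside: two disjoint paths from $v$ into the $K_4$, completed by an $a$--$b$ path inside the $K_4$ of length $1$, $2$, or $3$, produce three cycles of consecutive lengths, one of which is odd and exceeds $3$, a contradiction. In the remaining case, some edge $ab$ lies in every triangle. Then $H - ab$ is triangle-free and has no odd cycle of length greater than $3$, hence is bipartite; since $a$ and $b$ have a common neighbor they lie in the same part, so every $a$--$b$ path in $H-ab$ is even, and together with the edge $ab$ forms an odd cycle, which must have length $3$. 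Thus every such path has length exactly $2$. As $H$ is $2$-connected, every vertex $v$ lies on a cycle through the edge $ab$, i.e. on a length-$2$ path $a\,v\,b$, so $v$ is adjacent to both $a$ and $b$; any edge inside $V(H)\setminus\{a,b\}$ would create a triangle avoiding $ab$, so there are none, giving $H \cong K_2 \join \comp{K_r}$.

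The main obstacle is the bookkeeping in the parity arguments: each step succeeds by exhibiting an odd cycle of length at least $5$, so one must ensure the constructed cycles are genuinely simple (their constituent paths are internally disjoint and avoid the relevant triangle vertices) and that the length truly has the claimed parity. Taking shortest connecting paths and invoking Menger's theorem in the precise ``two disjoint $S$--$T$ paths'' form controls the disjointness, while the ``$\pm 1$ detour'' through the third vertex of a triangle (or through the interior of the $K_4$) is exactly what guarantees an odd cycle of forbidden length. The two extension steps --- promoting ``contains $K_4$'' to ``equals $K_4$'' and ``has the book edges'' to ``is exactly the book'' --- are where $2$-connectivity must be deployed most carefully.
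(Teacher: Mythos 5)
Your proposal is correct, but there is nothing in the paper to compare it against: the paper does not prove Theorem~\ref{thm:HIN} at all --- it is quoted as a black box from Hsu, Ikura, and Nemhauser~\cite{HIN}, and the paper's contribution is only to use it (via Lemma~\ref{lem:missv} and Proposition~\ref{prop:longodd}). So you have supplied a self-contained proof where the paper supplies a citation, and your proof checks out. The skeleton is sound: (1) any two triangles share an edge, proved by ruling out vertex-disjoint triangles (two disjoint $S$--$T$ paths exist since no single vertex can separate two sets of size $3$ in a $2$-connected graph, and the $de$-versus-$dfe$ detour gives cycles of consecutive lengths, the odd one having length at least $5$) and triangles meeting in one vertex (your shortest-path parity argument, with the explicit $5$-cycle $c\,b\,d\,e\,a$ correctly patching the $\ell=1$ case where the generic argument only produces a triangle and a $4$-cycle); (2) the dichotomy on whether some edge lies in every triangle, with the no-universal-edge case forcing $cd\in E$ and hence a $K_4$, promoted to $H\iso K_4$ by the fan from an outside vertex plus the three internal $a$--$b$ paths of lengths $1,2,3$; and the universal-edge case giving $H-ab$ bipartite with $a,b$ in the same part, all $a$--$b$ paths of length exactly $2$, and every vertex on a cycle through $ab$ (a standard consequence of $2$-connectivity), yielding $K_2 \join \overline{K_r}$ with the independence of $V(H)\setminus\{a,b\}$ forced by the universal-edge assumption. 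Two hairline points worth making explicit: in the no-universal-edge case you should note that at least two distinct triangles exist (if the triangle were unique, each of its edges would lie in every triangle, landing you in the other case), and that two triangles sharing two vertices automatically share the edge between them, so ``share at least a vertex pair'' really does mean ``share an edge.'' Both are one-line remarks, and neither affects the validity of the argument.
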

\noindent We start by proving a natural consequence of the
K\"onig--Egerv\'ary~Theorem.
\begin{lemma}\label{lem:strongke}
  If $v$ is a vertex in a bipartite graph $H$, then $v$ lies in some
  minimum vertex cover of $H$ if and only if $v$ is covered by every
  maximum matching of $H$.
\end{lemma}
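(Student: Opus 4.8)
The plan is to prove both directions using König's Theorem, which guarantees $\alpha'(H') = \beta(H')$ for every bipartite graph $H'$, applied both to $H$ itself and to the vertex-deleted subgraph $H - v$.

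First I would handle the forward direction: if $v$ lies in a minimum vertex cover $Q$, then $v$ is covered by every maximum matching $M$. The key point is that matching--cover duality is tight here, i.e.\ $\sizeof{Q} = \sizeof{M} = \alpha'(H)$ by König's Theorem. Since $Q$ is a vertex cover, each of the $\sizeof{M}$ edges of $M$ has an endpoint in $Q$; since $M$ is a matching, distinct edges of $M$ share no endpoints, so mapping each edge of $M$ to one of its endpoints in $Q$ is injective. As $\sizeof{M} = \sizeof{Q}$, this map is a bijection, forcing every vertex of $Q$ to be an endpoint of some edge of $M$. In particular $v$ is matched by $M$. (This direction in fact uses only that $H$ is a K\"onig--Egerv\'ary graph.)

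For the reverse direction I would argue the contrapositive: assuming $v$ lies in no minimum vertex cover, I would produce a maximum matching avoiding $v$. The central step is to establish $\beta(H - v) = \beta(H)$. The inequality $\beta(H - v) \leq \beta(H)$ is immediate, since deleting $v$ from a minimum cover of $H$ leaves a cover of $H - v$. For the reverse inequality, I would observe that if $\beta(H - v) < \beta(H)$, then adjoining $v$ to a minimum cover of $H - v$ would yield a cover of $H$ of size at most $\beta(H)$, hence a minimum cover of $H$ containing $v$, contradicting the hypothesis on $v$. Thus $\beta(H - v) = \beta(H)$. Applying König's Theorem to the (bipartite) graph $H - v$ then gives a matching of size $\beta(H - v) = \beta(H) = \alpha'(H)$ inside $H - v$, which is therefore a maximum matching of $H$ that misses $v$.

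I expect the one point requiring care to be the equality $\beta(H - v) = \beta(H)$ in the reverse direction: the whole argument turns on recognizing that any cover of $H - v$ strictly smaller than $\beta(H)$ could be extended by the single vertex $v$ into a \emph{minimum} cover of $H$ containing $v$, which is exactly what the hypothesis forbids. Once this equality is secured, König's Theorem on $H - v$ converts the cover-number statement into the desired matching, and both directions are complete.
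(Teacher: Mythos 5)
Your proposal is correct and takes essentially the same approach as the paper: both proofs pivot on the K\"onig--Egerv\'ary Theorem applied to $H$ and to the deleted subgraph $H-v$, transferring between $\beta(H-v)$, $\alpha'(H-v)$, and $\beta(H)=\alpha'(H)$. The only cosmetic differences are that you prove the forward direction by the tight-duality bijection between a maximum matching and a minimum cover (where the paper instead deduces $\alpha'(H-v)\leq\alpha'(H)-1$ from deleting $v$ out of the cover) and you phrase the reverse direction as a contrapositive, extending a too-small cover of $H-v$ by $v$; both rearrangements are sound.
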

\begin{proof}
  If $v$ lies in some minimum vertex cover $Q$, then $Q-v$ is a vertex
  cover of $H-v$, so $\beta(H-v) \leq \beta(H) - 1$.  By the
  K\"onig--Egerv\'ary~Theorem, $\alpha'(H-v) \leq \alpha'(H) - 1$.
  This implies that $v$ is covered by every maximum matching of $H$.

  Now assume that $v$ is covered in every maximum matching of
  $H$. This yields $\alpha'(H-v) = \alpha'(H) - 1$.  By the
  K\"onig--Egerv\'ary~Theorem, $\beta(H-v) = \alpha'(H) - 1$.  Adding
  $v$ to a minimum vertex cover in $H-v$ yields the desired vertex
  cover.
\end{proof}
Next, we consider the nonbipartite graphs in Theorem~\ref{thm:HIN}, obtaining
a stronger version of the weak K\"onig--Egerv\'ary property:
\begin{lemma}\label{lem:missv}
  If $H$ is $2$-connected and has no odd cycle of length greater than
  $3$, then for every $v \in V(H)$, there is a vertex set $Q$ and a
  matching $M$ such that:
  \begin{enumerate}
  \item $\sizeof{Q} \leq \sizeof{M}$,
  \item $Q$ is a vertex cover in $H-M$, and
  \item Either $v \in Q$ or $v$ is in no edge of $M$.
  \end{enumerate}  
  In particular, \iswke{H}.
\end{lemma}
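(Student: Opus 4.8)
The plan is to invoke the Hsu--Ikura--Nemhauser characterization (Theorem~\ref{thm:HIN}), which tells us that $H$ is bipartite, is $K_4$, or is $K_2 \join \overline{K_r}$ for some $r \geq 1$, and to treat these three cases separately. In each case I must exhibit, for an arbitrary fixed $v \in V(H)$, a matching $M$ and a set $Q$ covering $H - M$ with $\sizeof{Q} \leq \sizeof{M}$ and with $v$ either placed into $Q$ or left unsaturated by $M$. The final clause \iswke{H} is then immediate, since conditions (1) and (2) are exactly the requirement that $(M,Q)$ witness \iswke{H}.

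For the bipartite case I would lean on the two preceding lemmas. By the K\"onig--Egerv\'ary~Theorem, a maximum matching $M$ and a minimum vertex cover $Q$ of $H$ satisfy $\sizeof{Q} = \sizeof{M}$, and $Q$ covers $H-M$; so conditions (1) and (2) hold for any such pair. To secure condition (3) I split on the dichotomy of Lemma~\ref{lem:strongke}. If $v$ is saturated by every maximum matching, that lemma places $v$ in some minimum vertex cover $Q$; pairing this $Q$ with any maximum matching gives $v \in Q$. Otherwise some maximum matching $M$ misses $v$; pairing this $M$ with any minimum cover leaves $v$ unsaturated. Either way condition (3) holds.

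The remaining cases are the explicit nonbipartite graphs, handled by direct construction. For $K_4$, with vertices $\{1,2,3,4\}$ and perfect matching $M = \{12,34\}$, the graph $K_4 - M$ is a $4$-cycle whose only minimum covers are $\{1,2\}$ and $\{3,4\}$; every vertex lies in one of these pairs, so I can take $Q$ to contain the prescribed $v$. For $H = K_2 \join \overline{K_r}$ with hubs $a,b$ and independent set $c_1,\dots,c_r$, the set $\{a,b\}$ is a vertex cover of $H$. When $r = 1$, so $H = K_3$, taking $M$ to be the edge of $H$ avoiding $v$ and $Q = \{v\}$ works for every $v$. When $r \geq 2$ and $v \in \{a,b\}$, I take $Q = \{a,b\}$ with any maximum matching (of size $2$); when $v = c_i$ with $r \geq 3$, I take a size-two matching $\{a c_j, b c_k\}$ with $j,k \neq i$ together with $Q = \{a,b\}$, leaving $c_i$ unsaturated.

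The tight case, and the main point requiring care, is $v = c_i$ when $r = 2$ (so $H = K_4^-$): here every matching of size $2$ saturates $c_i$, so I cannot leave $c_i$ unsaturated while keeping $\sizeof{M} = 2 = \sizeof{Q}$, and I must instead insert $c_i$ into the cover. Taking $M = \{a c_{3-i}, b c_i\}$ and $Q = \{b, c_i\}$ resolves this, since $Q$ covers the three surviving edges $ab$, $ac_i$, $bc_{3-i}$ and contains $c_i$. Each of these is a finite explicit check, so the substance of the argument is organizing the Hsu--Ikura--Nemhauser case split and, within the bipartite case, deploying the ``covered by every maximum matching'' dichotomy of Lemma~\ref{lem:strongke}; no single step involves a substantial computation.
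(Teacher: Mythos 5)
Your proposal is correct and follows essentially the same route as the paper: invoke Theorem~\ref{thm:HIN}, settle the bipartite case via the K\"onig--Egerv\'ary~Theorem combined with the dichotomy of Lemma~\ref{lem:strongke}, and handle the nonbipartite graphs by explicit constructions (with the same key device of taking $Q$ to be $v$ together with its mate in $M$ for $K_4$ and $K_4^-$, and the two hubs as the cover for $K_2 \join \overline{K_r}$). The only difference is bookkeeping in how the nonbipartite cases are grouped, so there is nothing substantive to add.
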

\begin{proof}
  If $H$ is bipartite, then the claim follows from
  Lemma~\ref{lem:strongke} together with the
  K\"onig--Egerv\'ary~Theorem: if $v$ is covered by every maximum
  matching, then any minimum vertex cover has the desired
  properties. Thus, we may assume $H$ is not bipartite.  By
  Theorem~\ref{thm:HIN}, it suffices to consider three cases:
  
  \caze{1}{$H \iso K_3$.} Write $V(H) = \{v, w_1, w_2\}$. Let $Q =
  \{v\}$ and let $M = \{w_1w_2\}$; the only edge of $H$ not covered by
  $v$ is $w_1w_2$.

  \caze{2}{$H \iso K_4$ or $H \iso K_2 \join \overline{K_2}$.} Either
  way, $H$ has a matching $M$ of size $2$. Let $Q$ be $v$ together
  with its mate in $M$; the only edge of $H$ not covered by $Q$ is the
  other edge in $M$.

  \caze{3}{$H \iso K_2 \join \overline{K_m}$ for $m \geq 3$.}  Let $Q$
  consist of the two vertices of maximum degree. If $v \in Q$, then
  let $M$ be any matching of size $2$.  Otherwise, $\alpha'(H-v) = 2$,
  so we can take $M$ to be any maximum matching in $H-v$.
\end{proof}
\begin{proposition}\label{prop:longodd}
  If $H$ has no odd cycle of length greater than $3$, then \iswke{H}.
\end{proposition}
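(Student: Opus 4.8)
The plan is to reduce to connected $H$ (a disjoint union of weak K\"onig--Egerv\'ary graphs is again one, by taking the union of the witnessing pairs) and then to decompose $H$ into its blocks. Each block is a single vertex, a single edge, or a $2$-connected graph; since $H$ has no odd cycle of length greater than $3$, neither does any block, so by Theorem~\ref{thm:HIN} every $2$-connected block is bipartite, $K_4$, or $K_2 \join \overline{K_r}$. Lemma~\ref{lem:missv} already produces, for each such block $B$ and each prescribed $v \in V(B)$, a witnessing pair $(M,Q)$ in which $v$ is \emph{safe}, meaning $v \in Q$ or $v$ lies in no edge of $M$; the trivial blocks $K_1$ and $K_2$ are handled by hand. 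The goal is to glue these block-witnesses together along the block--cut tree of $H$.

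The engine is a gluing step for two graphs $H_1, H_2$ meeting in a single cut vertex $c$ and sharing no edge. Given witnesses $(M_1, Q_1)$ and $(M_2,Q_2)$ with $c$ safe in each, I form $(M_1 \cup M_2,\, Q_1 \cup Q_2)$. The cover condition is immediate, since $E(H) = E(H_1) \cup E(H_2)$ and each $Q_i$ covers $H_i - M_i$; the size and matching conditions are the subtle points. If $c$ is matched in at most one of $M_1, M_2$, the union is a matching and $|Q_1 \cup Q_2| \le |Q_1| + |Q_2| \le |M_1| + |M_2|$. If $c$ is matched in both, then safety forces $c \in Q_1 \cap Q_2$; I delete the edge of $M_2$ at $c$ (now covered by $c$), restoring a matching, and the overlap $c \in Q_1 \cap Q_2$ exactly compensates for the deleted edge in the size count. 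Either way the glued pair again has $c$ safe, so iterating this step over the block--cut tree assembles a global witness and proves $H \in \wke$.

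The hard part is propagating enough control to keep the gluing legal at every cut vertex at once. To glue a leaf block $B$ onto the remainder $H_2$ at their shared cut vertex $c$, the gluing step demands that $c$ be safe on \emph{both} sides; so I must strengthen the statement being proved to ``for every vertex $v$ there is a witness with $v$ safe,'' matching Lemma~\ref{lem:missv}, and recurse on $H_2$ with distinguished vertex $c$. The obstruction is that a single block cannot in general make two prescribed vertices safe simultaneously --- a triangle, for instance, has exactly one safe vertex in any witness --- so I cannot control both the cut vertex $c$ and an independently prescribed query vertex lying in the same block. I expect to resolve this through a choice of which leaf block to peel, together with a case analysis on its type: the ``flexible'' blocks (triangles, bipartite blocks, and $K_2 \join \overline{K_r}$ for large $r$) admit a witness leaving $c$ \emph{unmatched}, which sidesteps the matching conflict entirely and lets the union go through irrespective of the other side, whereas the ``rigid'' blocks ($K_2$, $K_4$, and small $K_2 \join \overline{K_r}$) must be matched at $c$ and therefore force $c$ to be safe on the $H_2$ side, to be arranged by the inductive hypothesis. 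Managing this dichotomy so that the global constraint $|Q| \le |M|$ --- which demonstrably cannot be enforced block by block --- survives all the gluings is where the real work lies.
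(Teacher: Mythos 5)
There is a genuine gap, and you name it yourself: your argument ends by conceding that making the global bound $\sizeof{Q}\leq\sizeof{M}$ survive all the gluings ``is where the real work lies.'' As written, the proof is not complete. The source of the trouble is that you strengthened the inductive statement to ``for every prescribed vertex there is a witness making that vertex safe'' and then demanded this on \emph{both} sides of every cut vertex. That forces you to track a distinguished vertex in the remainder graph, and the moment the prescribed query vertex and the cut vertex land in the same block you are asking one block to make two vertices safe simultaneously --- which, as your triangle example shows, is impossible in general. Your proposed repair (peel a different leaf block, classify blocks as flexible or rigid) just pushes the same bookkeeping problem one level down, and you do not carry it out.

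The paper's proof shows the strengthening is unnecessary, via a trick your plan is missing: safety is needed only on the \emph{block} side, and the two flavors of safety from Lemma~\ref{lem:missv} are exploited asymmetrically by changing what ``the remainder'' is. Peel a leaf block $B$ with cut vertex $v$ and take a witness $(M_B,Q_B)$ for $B$ with $v$ safe. If $v\in Q_B$, recurse on $H'=H-V(B)$, deleting $v$ along with the block: every edge from $v$ into the rest of $H$ is already covered by $v\in Q_B$, and $M'\cup M_B$ is trivially a matching since the vertex sets are disjoint. If instead $v$ is unmatched by $M_B$, recurse on $H'=H-(V(B)-v)$, keeping $v$: the union of matchings is again a matching precisely because $M_B$ misses $v$, and edges of $B$ at $v$ are covered by $Q_B$ through their other endpoints. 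In both cases the plain inductive hypothesis \iswke{H'} suffices --- no distinguished vertex, no two-sided safety condition, no flexible/rigid dichotomy --- and the sizes add: $\sizeof{Q'\cup Q_B}\leq\sizeof{Q'}+\sizeof{Q_B}\leq\sizeof{M'}+\sizeof{M_B}=\sizeof{M'\cup M_B}$. So your ingredients (block--cut tree, Theorem~\ref{thm:HIN}, the safe-vertex property of Lemma~\ref{lem:missv}) are the right ones, but the propagation scheme you chose creates an obstruction the paper's choice of $H'$ dissolves entirely.
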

\begin{proof}
  We use induction on $\sizeof{V(H)}$. If $\sizeof{V(H)} = 1$
  then clearly \iswke{H}. Now suppose that $\sizeof{V(H)} > 1$
  and the claim holds for all graphs with fewer vertices and
  no odd cycle of length greater than $3$.

  By the induction hypothesis, we may assume that $H$ is connected. On
  the other hand, if $H$ is $2$-connected, then \iswke{H}, by
  Lemma~\ref{lem:missv}. Thus we may assume that $H$ is connected but
  not $2$-connected, so $H$ has a leaf block $B$. Note that
  $\sizeof{V(B)} \geq 2$.

  Let $v$ be the cut vertex of $H$ contained in $B$. By
  Lemma~\ref{lem:missv}, $B$ has a matching $M_B$ and vertex cover
  $Q_B$ such that $\sizeof{Q_B} \leq \sizeof{M_B}$, $Q_B$ is a vertex
  cover in $B - M_B$, and either $v \in Q_B$ or $v$ is in no edge of
  $M_B$.

  Now define a subgraph $H'$ as follows: if $v \in Q_B$, let $H' =
  H-V(B)$; otherwise, let $H' = H - (V(B) - v)$. By the induction
  hypothesis, \iswke{H'}; let $M'$ and $Q'$ witness \iswke{H'}. Let $Q
  = Q' \cup Q_B$ and $M = M' \cup M_B$.  Whether or not $v \in Q_B$,
  we see that $M$ is a matching and that $Q$ is a vertex cover in
  $H-M$. Clearly $\sizeof{Q} \leq \sizeof{M}$, so \iswke{H}.
\end{proof}
\begin{corollary}\label{cor:deg4}
  If $H$ is connected and $\sizeof{V(H)} \leq 4$, then \iswke{H}.
\end{corollary}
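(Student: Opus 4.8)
The goal is to show that every connected graph $H$ with $\sizeof{V(H)} \leq 4$ is a weak K\"onig--Egerv\'ary graph. The plan is to derive this directly from Proposition~\ref{prop:longodd}, which already establishes $H \in \wke$ whenever $H$ has no odd cycle of length greater than $3$. So the entire task reduces to verifying that a connected graph on at most four vertices simply \emph{cannot} contain an odd cycle of length greater than $3$.

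First I would observe that the shortest odd cycle of length exceeding $3$ is a $5$-cycle, which has five vertices. Since $H$ has at most four vertices, $H$ contains no cycle of length $5$ or greater at all, and in particular no odd cycle of length greater than $3$. Thus the hypothesis of Proposition~\ref{prop:longodd} is satisfied vacuously with respect to long odd cycles, and the proposition immediately yields $H \in \wke$. Note that connectedness is not even needed for this argument, though it is harmless to assume; the only fact being used is the vertex bound.

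I do not anticipate any real obstacle here: the corollary is essentially a specialization of Proposition~\ref{prop:longodd} obtained by noting that the forbidden substructure (a long odd cycle) is too large to fit inside a four-vertex graph. The one point worth stating explicitly, to keep the argument airtight, is the length count: any odd cycle longer than a triangle has at least $5$ vertices, so the four-vertex constraint rules it out. Once this observation is recorded, the result follows in a single line by invoking Proposition~\ref{prop:longodd}.
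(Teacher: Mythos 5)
Your proof is correct and is exactly the argument the paper intends: the corollary is stated without proof precisely because a graph on at most four vertices cannot contain a cycle of length $5$ or more, so Proposition~\ref{prop:longodd} applies immediately. Your added remark that connectedness is not actually needed is also accurate, since Proposition~\ref{prop:longodd} imposes no connectivity hypothesis.
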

\begin{corollary}\label{cor:small}  
  If $H$ is connected and $\alpha'(H) \leq 1$, then \iswke{H}.
  Also, if $H$ is connected, $\sizeof{V(H)} > 5$, and
  $\alpha'(H) = 2$, then \iswke{H}.
\end{corollary}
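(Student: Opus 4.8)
The plan is to reduce both statements to Proposition~\ref{prop:longodd} by showing that, under the stated hypotheses, $H$ has no odd cycle of length greater than $3$. The leverage I intend to use is that every such odd cycle has length at least $5$, and a cycle of length $\ell$ contains a matching of size $\floor{\ell/2}$; thus the matching number $\alpha'(H)$ tightly controls which long odd cycles can appear.

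For the first statement, suppose $\alpha'(H) \le 1$, so $H$ has no two disjoint edges. Any cycle of length at least $4$ contains two disjoint edges, so $H$ has no such cycle; in particular $H$ has no odd cycle of length greater than $3$, and Proposition~\ref{prop:longodd} gives \iswke{H}. (Concretely, a connected graph with $\alpha'(H) \le 1$ is a single vertex, a star, or a triangle, but invoking Proposition~\ref{prop:longodd} avoids the case check.)

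For the second statement, assume $H$ is connected with $\sizeof{V(H)} > 5$ and $\alpha'(H) = 2$, and suppose for contradiction that $H$ contains an odd cycle $C$ of length $\ell \ge 5$. If $\ell \ge 7$, then $C$ alone contains a matching of size at least $3$, contradicting $\alpha'(H) = 2$. So $\ell = 5$; write $V(C) = \{v_1, \ldots, v_5\}$. Since $H$ is connected and $V(C) \ne V(H)$, some edge $wv_i$ joins a vertex $w \notin V(C)$ to a cycle vertex $v_i$. Deleting $v_i$ from $C$ leaves a path on four vertices, which has a matching of size $2$; adding the edge $wv_i$ produces a matching of size $3$ in $H$, again contradicting $\alpha'(H) = 2$. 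Hence $H$ has no odd cycle of length greater than $3$, and Proposition~\ref{prop:longodd} yields \iswke{H}.

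The one delicate point is the $\ell = 5$ case, which is exactly where the hypothesis $\sizeof{V(H)} > 5$ is essential: it guarantees a vertex outside the cycle and hence, by connectivity, an edge leaving the cycle, which is what lets us extend the cycle's size-$2$ matching to size $3$. Without this hypothesis the argument genuinely fails --- for instance $K_5$ is connected with $\alpha'(K_5) = 2$ but contains a $5$-cycle and is not a weak K\"onig--Egerv\'ary graph --- so I expect confirming the role of the extra vertex to be the main (if modest) obstacle, while the remaining steps are immediate.
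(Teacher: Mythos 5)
Your proof is correct and follows essentially the same route as the paper: both reduce to Proposition~\ref{prop:longodd}, noting that a long odd cycle yields a matching of size $3$ directly when its length exceeds $5$, and that a $5$-cycle plus the edge to an outside vertex (guaranteed by connectivity and $\sizeof{V(H)} > 5$) does the same. The only cosmetic difference is that the paper phrases the argument contrapositively ($H \notin \wke$ implies a long cycle), while you argue by contradiction from the cycle's existence.
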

\begin{proof}
  If $H \notin \wke$, then $H$ contains a cycle of length at least
  $5$, which implies $\alpha'(H) \geq 2$. For the second claim,
  observe that any cycle of length at least $6$ contains a matching of
  size $3$, while if $C$ is a $5$-cycle in $H$, then there are
  adjacent vertices $v \in V(C)$ and $w \notin V(C)$, which yields the
  following matching of size $3$:
  \begin{center}
    \begin{tikzpicture}
      \foreach \i in {0,...,4}
      {
        \apoint{} (z\i) at (72*\i : .5cm) {};
      }
      \apoint{} (y) at (1cm,0cm) {};
      \draw[medge] (z0) -- (y);
      \draw[medge] (z1) -- (z2);
      \draw[medge] (z3) -- (z4);
      \draw (z0) -- (z1);
      \draw (z2) -- (z3);
      \draw (z4) -- (z0);
    \end{tikzpicture}
  \end{center}
\end{proof}
We can now give a more specific generalization of the theorem of
Aparna~Lakshmanan, Bujt\'as, and Tuza~\cite{LBT} concerning
odd-wheel-free graphs:
\begin{corollary}\label{cor:wheel}
  If $G$ has no subgraph isomorphic to any wheel graph $W_n$ for
  $n$ odd with $n \geq 5$, then $\tau(G) \leq 2\nu(G)$.
\end{corollary}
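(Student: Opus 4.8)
The plan is to run the same minimal-counterexample scheme used to prove Theorem~\ref{thm:mainthm}, except that the reducible set will come directly from the neighborhood structure rather than from discharging. Suppose the conclusion fails, and let $G$ be a minimal counterexample. Since the absence of any $W_n$ with $n$ odd and $n \geq 5$ is inherited by every subgraph, each proper subgraph $G'$ of $G$ lies in the same class and hence satisfies $\tau(G') \leq 2\nu(G')$ by minimality. It therefore suffices to exhibit a reducible set in $G$: applying Lemma~\ref{lem:reducible-wins} with $G' = (G-X)-V_0$, which is a proper subgraph whenever $V_0 \neq \emptyset$, then forces $\tau(G) \leq 2\nu(G)$, contradicting the choice of $G$.

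The one real idea is that the forbidden-wheel hypothesis is exactly a statement about odd cycles in neighborhoods. Fix any vertex $v \in V(G)$, and I claim that $G[N(v)]$ has no odd cycle of length greater than $3$. Indeed, an odd cycle $C$ of length $m \geq 5$ in $G[N(v)]$ would have all of its vertices adjacent to $v$, so $C$ together with $v$ as the hub would form a subgraph of $G$ isomorphic to $W_m$ with $m$ odd and $m \geq 5$, contradicting the hypothesis. Given the claim, Proposition~\ref{prop:longodd} yields \iswke{G[N(v)]}, and then the second part of Lemma~\ref{lem:wke} shows that $\{v\}$ is reducible.

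Since $\{v\}$ is a nonempty reducible set, Lemma~\ref{lem:reducible-wins} (applied with $G' = (G-X)-\{v\}$) completes the contradiction, as outlined above. If $G$ is edgeless the inequality is trivial, so we may assume $G$ has a vertex on which to run the argument; in fact any vertex will do. I expect no genuine obstacle in this corollary: the entire weight of the proof rests on Proposition~\ref{prop:longodd} and Lemma~\ref{lem:wke}, both already available, and the only point requiring care is the translation in the middle paragraph. In particular, one must observe that forbidding wheels only for $n \geq 5$ still permits triangles in neighborhoods, which is harmless because Proposition~\ref{prop:longodd} allows odd $3$-cycles, and that the rim cycle witnessing a wheel need not be induced, so no chordlessness is required of $C$.
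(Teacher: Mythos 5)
Your proposal is correct and follows essentially the same route as the paper: the key observation that a long odd cycle in $G[N(v)]$ plus the hub $v$ would form a forbidden wheel, then Proposition~\ref{prop:longodd} and Lemma~\ref{lem:wke} to get that $\{v\}$ is reducible, then Lemma~\ref{lem:reducible-wins}. The paper phrases the outer argument as induction on $\sizeof{V(G)}$ rather than a minimal counterexample, but these are interchangeable here since the wheel-free hypothesis is hereditary, as you note.
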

\begin{proof}
  We use induction on $\sizeof{V(G)}$. When $\sizeof{V(G)} = 1$ the
  claim is obvious. When $\sizeof{V(G)} > 1$, take any vertex $v \in
  V(G)$.  The subgraph $G[N(v)]$ has no odd cycle of length greater
  than $3$, since that would yield a forbidden wheel in $G$. Hence
  \iswke{G[N(v)]}. By Proposition~\ref{prop:longodd}, $\{v\}$ is
  reducible. By the induction hypothesis and
  Lemma~\ref{lem:reducible-wins}, the claim holds.
\end{proof}
In particular, Corollary~\ref{cor:wheel} implies a recent result of
Aparna~Lakshmanan, Bujt\'as, and Tuza~\cite{LBT-perfect}, who
independently proved that if the triangle graph $\mathcal{T}(G)$ is
perfect, then $\tau(G) \leq 2\nu(G)$. See \cite{LBT-perfect} for the
definition of the triangle graph.

Recall that $G$ is \emph{robust} if for every $v \in V(G)$, every
component of $G[N(v)]$ has order at least $5$. In Section~\ref{sec:main}
we stated the following lemma, which now follows from the earlier
results of this section:
\begin{robustrecap}
  If $G$ is a minimal counterexample to Tuza's~Conjecture, then $G$
  is robust.  
\end{robustrecap}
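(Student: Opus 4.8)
The plan is to prove the contrapositive: I would show that a graph which is not robust cannot be a minimal counterexample to Tuza's~Conjecture. So suppose $G$ is not robust. By the definition of robustness, there is a vertex $v \in V(G)$ such that some component $C$ of $G[N(v)]$ has order at most $4$.

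The next step is to recognize $C$ as a weak K\"onig--Egerv\'ary graph. Since $C$ is connected with $\sizeof{V(C)} \leq 4$, Corollary~\ref{cor:deg4} gives \iswke{C}. Now $C$ is a single component of $G[N(v)]$, hence in particular a nonempty union of components, so I may take $G_0 = C$ in Lemma~\ref{lem:wke}. That lemma then produces a reducible set of edges $E_0$ in $G$, reducible using some $\sey$ and $X$.

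Finally, I would invoke minimality. Because $E_0$ is nonempty, at least one edge is removed in forming $G' = (G - X) - E_0$, so $G'$ is a proper subgraph of $G$; the minimality of $G$ therefore gives $\tau(G') \leq 2\nu(G')$. Lemma~\ref{lem:edge-reducible} then yields $\tau(G) \leq 2\nu(G)$, contradicting the assumption that $G$ is a counterexample. Hence every minimal counterexample must be robust.

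Almost all of the work has already been done in Corollary~\ref{cor:deg4} and Lemma~\ref{lem:wke}, so I expect no serious obstacle here; the statement is essentially a repackaging of those results together with the reducibility framework. The only points requiring a little care are confirming that a single small component legitimately serves as the ``nonempty union of components'' demanded by Lemma~\ref{lem:wke}, and that deleting the nonempty reducible edge set $E_0$ genuinely produces a proper subgraph, so that the minimality hypothesis applies. Both are immediate.
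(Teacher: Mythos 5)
Your proposal is correct and follows exactly the paper's route: the paper proves this lemma by citing Lemma~\ref{lem:edge-reducible}, Lemma~\ref{lem:wke}, and Corollary~\ref{cor:deg4}, and your argument is precisely the spelled-out version of that chain (small component of $G[N(v)]$ is weak K\"onig--Egerv\'ary, hence yields a reducible edge set, hence contradicts minimality). No gaps; the two points you flag as needing care are indeed immediate.
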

\begin{proof}
  Follows immediately from Lemma~\ref{lem:edge-reducible},
  Lemma~\ref{lem:wke}, and Corollary~\ref{cor:deg4}.
\end{proof}
\begin{corollary}\label{cor:ind}
  Let $H$ be an $n$-vertex connected graph, where $n \geq 6$. If $H$
  has an independent set of size $n-3$, then \iswke{H}.
\end{corollary}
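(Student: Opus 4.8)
The plan is to exploit the complement of the independent set. Write $I$ for an independent set of size $n-3$ and set $S = V(H) \setminus I$, so that $\sizeof{S} = 3$ (here $n \geq 6$ guarantees $n - 3 \geq 3$, so $S$ has exactly three vertices). Since $I$ is independent, every edge of $H$ has at least one endpoint in $S$; that is, $S$ is a vertex cover of $H$, and in particular $\beta(H) \leq 3$. This single observation—that a large independent set forces a small vertex cover—is the crux of the argument.

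From here I would split on the matching number $\alpha'(H)$, noting that $\alpha'(H) \leq \beta(H) \leq 3$, so the cases $\alpha'(H) \leq 2$ and $\alpha'(H) = 3$ are exhaustive. The cases of small matching number are already handled by earlier results: since $H$ is connected on $n \geq 6 > 5$ vertices, Corollary~\ref{cor:small} immediately gives \iswke{H} when $\alpha'(H) \leq 2$ (its first claim covers $\alpha'(H) \leq 1$, and its second covers $\alpha'(H) = 2$ because $\sizeof{V(H)} > 5$).

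In the remaining case $\alpha'(H) = 3$, I would simply take $M$ to be a matching of size $3$ and $Q = S$. Then $\sizeof{Q} = 3 = \sizeof{M}$, and $Q$ is a vertex cover of $H$, hence also of $H - M$, so the pair $(M,Q)$ witnesses \iswke{H}. (Equivalently, one observes that $\alpha'(H) = 3 = \beta(H)$ forces $H \in \ke \subset \wke$.)

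I do not expect a genuine obstacle here: once the vertex-cover-of-size-$3$ observation is in place, the case analysis is routine and leans entirely on Corollary~\ref{cor:small} for the low-matching cases. The only point worth stating carefully is the bookkeeping $\alpha'(H) \leq \beta(H) \leq 3$, which makes the two-case split exhaustive and pins down the construction of $(M,Q)$ in the final case.
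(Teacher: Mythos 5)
Your proof is correct and takes essentially the same route as the paper: the paper also handles $\alpha'(H) \leq 2$ via Corollary~\ref{cor:small} and otherwise observes that the complement of the independent set is a vertex cover of size at most $3$, forcing $\alpha'(H) = \beta(H) = 3$ and hence \iske{H}, so \iswke{H}. Your explicit pair $(M,Q)$ with $Q = S$ is just an unwinding of that same K\"onig--Egerv\'ary observation.
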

\begin{proof}
  If $\alpha'(H) \leq 2$, then \iswke{H}, by Corollary~\ref{cor:small}.
  Otherwise, $\alpha'(H) \geq 3$, and the complement in $V(H)$ of a
  maximum independent set is a vertex cover of size at most $3$.
  Thus \iske{H}.
\end{proof}
Finally, we give a sufficient condition for small graphs to be
weak K\"onig--Egerv\'ary graphs. (In fact, the condition is also
necessary, but we do not need the other direction, so we omit it.)
\begin{proposition}\label{prop:weak-comp-matching}
  Let $H$ be an $n$-vertex connected graph, where $n \in \{5,6\}$. If
  $\Delta(\comp{H}) > 1$, then \iswke{H}.
\end{proposition}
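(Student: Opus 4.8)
The plan is to prove the proposition through an explicit construction of a witnessing pair, guided by the following reformulation of Definition~\ref{def:wke}: \iswke{H} holds if and only if there is a vertex set $Q \subset V(H)$ such that the subgraph induced on $V(H) \setminus Q$ has maximum degree at most $1$ and $H$ has a matching $M$ with $\sizeof{M} \ge \sizeof{Q}$ that contains every edge having both endpoints outside $Q$. This equivalence is immediate: if $(M,Q)$ witnesses \iswke{H}, then every edge avoiding $Q$ must lie in $M$ (since $Q$ covers $H-M$), forcing the subgraph induced on $V(H)\setminus Q$ to be a submatching of $M$; conversely any such $Q$ and $M$ witness \iswke{H}. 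The hypothesis $\Delta(\comp{H}) > 1$ provides a vertex $u$ with a set $B$ of at least two non-neighbors, and I would build $Q$ from $u$ and $B$.

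First I would strip away the easy cases using the matching number. By Corollary~\ref{cor:small}, if $\alpha'(H) \le 1$ then \iswke{H}; and when $n = 6$ the same corollary also disposes of $\alpha'(H) = 2$. Thus for $n = 5$ it remains to treat $\alpha'(H) = 2$, and for $n = 6$ it remains to treat the case where $H$ has a perfect matching.

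For $n = 5$ with $\alpha'(H) = 2$, I would pick two non-neighbors $a,b$ of $u$ and let $Q$ be the remaining two vertices, so that $V(H) \setminus Q = \{u,a,b\}$. The only possible edge inside this triple is $ab$, so the induced subgraph has maximum degree at most $1$. If $ab \notin E(H)$, any maximum matching (of size $2 = \sizeof{Q}$) works. If $ab \in E(H)$, then since $H$ is connected $u$ has a neighbor, which must be one of the two vertices of $Q$, and that edge is disjoint from $ab$; together they form the required matching of size $2$. Either way the reformulation yields \iswke{H}.

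For $n = 6$ with a perfect matching, I would first invoke Corollary~\ref{cor:ind}: if $H$ has an independent set of size $3$ we are done, so assume it does not. Then any two non-neighbors of $u$ are adjacent (otherwise they join $u$ in an independent triple), so $B$ is a clique, and more generally no three vertices of $H$ are independent. I would choose $a,b \in B$ and set $Q = V(H)\setminus\{u,a,b\}$, a set of size $3$; the subgraph induced on $\{u,a,b\}$ is then the single edge $ab$. It therefore suffices to produce a perfect matching of $H$ containing $ab$, equivalently a perfect matching of the four vertices $V(H)\setminus\{a,b\}$. This is the crux of the argument and where I expect the real work to lie: I would verify it by cases on $\sizeof{B}\in\{2,3,4\}$. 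When $\sizeof{B}=2$, the three neighbors of $u$ are not independent, so they contain an edge that pairs with an edge at $u$; when $\sizeof{B}=4$, the vertex $u$ is matched to its unique neighbor and the two leftover clique vertices pair off. The delicate case is $\sizeof{B}=3$, where I would use connectivity of $H$ to choose the pair $a,b$ so that the third vertex of $B$ has a neighbor outside $B\cup\{u\}$, which guarantees the perfect matching. In every case the reformulation then gives \iswke{H}.
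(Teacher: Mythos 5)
Your proof is correct, and it shares the paper's skeleton up to a point: the same reduction via Corollary~\ref{cor:small} to $\alpha'(H) = n-3$, the same cover $Q = V(H) \setminus \{u,a,b\}$ built from a vertex $u$ and two of its non-neighbors, and the same appeal to Corollary~\ref{cor:ind} when $n = 6$. Your opening reformulation of Definition~\ref{def:wke} is a valid repackaging of what the paper verifies implicitly each time (``$Q$ is a vertex cover in $H-M$''). Where you genuinely diverge is the endgame for $n=6$: the paper fixes the non-adjacent pair $z_1,z_2$ once and for all, and must then confront the possibility that no maximum matching contains $z_1z_2$, which it resolves by deducing $H - \{z_1,z_2\} \iso K_3 + K_1$, analyzing the structure of maximum matchings, and switching to the different cover $\{y,z_1,z_2\}$. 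You instead exploit the freedom to choose the pair adaptively: once Corollary~\ref{cor:ind} forces $\alpha(H) < 3$, the non-neighborhood $B$ of $u$ is a clique, and your case analysis on $\sizeof{B} \in \{2,3,4\}$ --- in particular using connectivity when $\sizeof{B} = 3$ to select the pair so that the leftover clique vertex has a neighbor in $N(u)$ --- always yields a perfect matching through the chosen non-edge $ab$, so the paper's failure-structure analysis never arises. (Your $n=5$ argument likewise shows a maximum matching through $ab$ always exists in a connected graph, so the paper's Case~1 hypothesis is in fact unsatisfiable there.) The trade-off: the paper's route exposes the obstruction ($K_3+K_1$) that can block a \emph{fixed} non-edge, while yours is more uniform and constructive, showing that adaptivity in choosing the non-edge circumvents that obstruction entirely; the two are of comparable length.
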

\begin{proof} Since $\Delta(\comp{H}) > 1$, we may take $u, z_1, z_2
  \in V(H)$ such that $uz_1, uz_2 \notin E(H)$. By
  Corollary~\ref{cor:small}, we may assume $\alpha'(H) = n - 3$, since $n
  \in \{5,6\}$. If $z_1z_2 \notin E(H)$, then $V(H) - \{u,z_1,z_2\}$
  is a vertex cover in $H$ having size $n - 3$, which implies
  \iske{H}.  Thus we may assume $z_1z_2 \in E(H)$. Also, if there is
  some maximum-size matching $M$ containing the edge $z_1z_2$, then
  $V(H) - \{u,z_1,z_2\}$ is a vertex cover of size $n-3$ in $H-M$,
  which implies \iswke{H}.

  \caze{1}{$n = 5$ and $\alpha'(H) = 2$}. Since no maximum-size
  matching contains $z_1z_2$, there are no edges in $H - \{z_1,z_2\}$,
  so $\{z_1,z_2\}$ is a vertex cover in $H$. Hence \iske{H}.

  \caze{2}{$n = 6$ and $\alpha'(H) = 3$}.  Let $H_0 = H -
  \{z_1,z_2\}$; since no maximum-size matching contains $z_1z_2$, we
  have $\alpha'(H_0) \leq 1$.  By Corollary~\ref{cor:ind}, if $H$ has
  an independent set of size $3$, then \iswke{H}. Thus we may
  assume that $\alpha(H) < 3$, which implies that $H_0$ is a graph on
  $4$ vertices such that $\alpha'(H_0) \leq 1$ and $\alpha(H_0) <
  3$. This is only possible if $H_0 \iso K_3 + K_1$, as illustrated in
  Figure~\ref{fig:k1k3}.\begin{figure} \centering
    \begin{tikzpicture}
      \apoint{z_1} (z1) at (0cm, 0cm) {};
      \apoint{z_2} (z2) at (1cm, 0cm) {};
      \qvert{z1};
      \qvert{z2};
      \draw (z1) -- (z2);
      \lpoint{y} (y) at (0cm, -1cm) {};
      \qvert{y};
      \lpoint{} (y1) at (-.5cm, -1.5cm) {};
      \lpoint{} (y2) at (.5cm, -1.5cm) {};
      \draw (y) -- (y1) -- (y2) -- (y);
      \draw[ultra thick] (y1) -- (y2) node[below,pos=.5] {$e_1$};
      \apoint{} (k1) at (1.5cm, -1cm) {};
      \draw[ultra thick] (z2) -- (k1) node[right,pos=.5] {$e_3$};
      \draw[ultra thick] (z1) -- (y) node[left, pos=.5] {$e_2$};
      \draw (y) -- (z1);
      \draw (k1) -- (z2);
      \draw[dashed] (-.75cm, -.75cm) -- (1.75cm, -.75cm) -- (1.75cm, -2cm) -- (-.75cm, -2cm) -- cycle;
      \node at (2cm, -1.375cm) {$H_0$};
    \end{tikzpicture}
    \caption{Finding a matching and vertex cover in $H$.}
    \label{fig:k1k3}
  \end{figure}
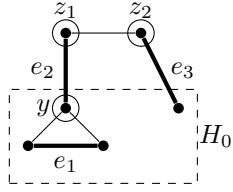

  It follows that if $M$ is any maximum-size matching in $H$, then one
  edge of $M$ must lie inside the $K_3$ component of $H_0$, one edge
  must join the $K_3$ component to $\{z_1,z_2\}$, and one edge must
  join the $K_1$ component to $\{z_1,z_2\}$. Fix some maximum-size
  matching $M$ and label its edges $e_1, e_2, e_3$ respectively.  Let
  $y$ be the vertex in the $K_3$ component not contained in $e_1$. The
  set $\{y,z_1,z_2\}$ is a vertex cover in $H-M$, so \iswke{H}.
\end{proof}
Using weak K\"onig--Egerv\'ary graphs, we have shown that any
minimum counterexample to Tuza's~Conjecture is robust (and therefore
has minimum degree at least $5$), and we have obtained strong
restrictions on the possible neighborhoods of any $5$- or $6$-
vertex.
\section{Low-Degree Vertices}\label{sec:low}
In this section, we will study the behavior of $6^-$-vertices in
graphs with no reducible set. The main result of the section is
Proposition~\ref{prop:red-pair-6}, which states that the
$6^-$-vertices form an independent set; this result is used heavily in
Section~\ref{sec:subsume}.

We first obtain a stronger version of
Proposition~\ref{prop:weak-comp-matching}, using the observation that
it is possible for $\{v\}$ to be reducible even though $G[N(v)] \notin
\wke$.
\begin{proposition}\label{prop:comp-matching}
  \robG.  If $v \in V(G)$ with $d(v) \leq 6$, then $\{v\}$ is
  reducible in $G$ if $\Delta(\comp{G[N(v)]}) > 1$.  Also, if $d(v)
  \leq 6$ and $\comp{G[N(v)]}$ has exactly $2$ edges, then $\{v\}$ is
  reducible.
\end{proposition}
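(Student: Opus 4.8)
The plan is to prove the proposition by showing that when $\Delta(\comp{G[N(v)]}) > 1$ or $\comp{G[N(v)]}$ has exactly two edges, we can construct a set $\sey$ of edge-disjoint triangles through $v$ and an edge set $X$ witnessing that $\{v\}$ is reducible, exploiting robustness to ensure enough structure is present. By robustness, $d(v) \geq 5$, and since $d(v) \leq 6 < 10$, the graph $G[N(v)]$ is connected. Write $H = G[N(v)]$, an $n$-vertex connected graph with $n \in \{5,6\}$. The key point is that Proposition~\ref{prop:weak-comp-matching} already handles the case $\Delta(\comp{H}) > 1$ directly: it gives \iswke{H}, whence the second claim of Lemma~\ref{lem:wke} shows $\{v\}$ is reducible. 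So the first assertion of the proposition is essentially immediate from the earlier results, and the real content lies in the second assertion, where $\comp{H}$ has exactly two edges.

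For the second assertion, I would first observe that if $\comp{H}$ has exactly two edges and $\Delta(\comp{H}) > 1$, then those two edges share an endpoint, and the first part already applies; so the genuinely new case is when $\comp{H}$ has exactly two \emph{disjoint} (nonadjacent) edges, i.e. $\Delta(\comp{H}) = 1$ with a matching of size $2$ in the complement. Here $H \notin \wke$ need not hold, but the point flagged in the text is that $\{v\}$ can be reducible even when $H \notin \wke$: we are allowed to use triangles and $X$-edges that leave $N[v]$, which the $\wke$-based argument of Lemma~\ref{lem:wke} does not exploit. Concretely, with $\comp{H}$ consisting of two disjoint missing edges, say $a_1a_2$ and $b_1b_2$ absent, the graph $H$ is $K_n$ minus a perfect-or-near-perfect matching of size $2$. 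I would then directly build $\sey$ and $X$: choose triangles through $v$ covering the edges we must destroy, matched against at most twice as many $X$-edges, and verify conditions (i)--(iii) of Definition~\ref{def:reducible} by hand, using that every triangle on $v$ uses two neighbors that are adjacent in $H$.

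The main obstacle I expect is the bookkeeping in the $\Delta(\comp{H})=1$, two-disjoint-edges case: I must cover every triangle containing $v$ (equivalently every edge of $H$) while keeping $\sizeof{X} \leq 2\sizeof{\sey}$ and respecting condition (iii), which forces $X$ to contain every $\sey$-edge with both endpoints outside $\{v\}$ — that is, every ``base'' edge $xy$ of a triangle $vxy$ in $\sey$. The natural attempt is to take $\sey$ to consist of two triangles $v x_1 y_1$ and $v x_2 y_2$ on four distinct neighbors and set $X = \{x_1y_1, x_2y_2\}\cup\{vw : w \in Q\}$ for a small cover $Q$, exactly mirroring the $\wke$ construction, so I would try to show that the two missing edges let us pick a two-element $Q$ covering all remaining edges of $H$ after deleting the two matching edges. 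I expect this to work for $n=5$ and $n=6$ after a short case check on which pairs of disjoint edges are missing, and I would present the explicit triangles and cover in each case rather than invoking $\wke$. If the direct cover is too large in some subcase, the fallback is to route one triangle or one $X$-edge outside $N[v]$ using robustness (which guarantees additional adjacencies among the neighbors), thereby gaining the slack needed to satisfy $\sizeof{X} \leq 2\sizeof{\sey}$.
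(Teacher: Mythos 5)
You handle the first assertion exactly as the paper does (Lemma~\ref{lem:wke} plus Proposition~\ref{prop:weak-comp-matching}), and you correctly isolate the crux: $H = G[N(v)]$ with $\comp{H}$ consisting of two disjoint edges. But your construction for that case cannot work. Your ``natural attempt'' --- $\sey = \{vx_1y_1, vx_2y_2\}$ and $X = \{x_1y_1, x_2y_2\} \cup \{vw \st w \in Q\}$ with $\sizeof{Q} \le 2$ --- is precisely a pair $(M,Q)$ witnessing $H \in \wke$, and no such pair exists for these graphs. Indeed, here $\sizeof{Q} \le \sizeof{M} \le \alpha'(H) = n-3$ where $n = d(v) \in \{5,6\}$, so at least three vertices of $H$ lie outside $Q$; any three such vertices span at most one non-edge of $H$ (the non-edges form a matching of size two), hence at least two edges of $H$, and any two edges on three vertices share an endpoint. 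Since $Q$ must cover $H - M$, both of those edges would have to lie in the matching $M$ --- a contradiction. So $H \notin \wke$, which is the entire reason this proposition exists: the sentence preceding it in the paper notes that $\{v\}$ can be reducible even though $G[N(v)] \notin \wke$. Your primary plan therefore fails in \emph{every} subcase, not just ``some subcase,'' and the whole burden falls on your fallback.

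The fallback as stated also fails. For a $6^-$-vertex, robustness says only that $G[N(v)]$ is connected; it guarantees nothing outside $N[v]$, so a triangle using a vertex outside $N[v]$ need not exist, and an $X$-edge outside $N[v]$ is useless because every triangle through $v$ lies entirely inside $N[v]$. The idea you are missing --- and the paper's actual device --- is to put into $\sey$ triangles contained in $N(v)$ that \emph{avoid} $v$. By condition (iii) of Definition~\ref{def:reducible}, all three edges of such a triangle are forced into $X$, but those edges are themselves edges of $H$ that must be destroyed, so each such triangle disposes of three troublesome edges at a net budget cost of one. Concretely, for $d(v)=5$ with non-edges $w_1w_2, w_3w_4$, the paper takes $\sey = \{vw_2w_4,\, vw_1w_3,\, w_1w_4w_5,\, w_2w_3w_5\}$ and $X = E(H)$, giving $\sizeof{X} = 8 = 2\sizeof{\sey}$; for $d(v)=6$ it takes $\sey = \{vw_1w_4,\, vw_2w_3,\, vw_5w_6,\, w_1w_3w_5,\, w_2w_4w_5\}$ and $X = E(H - w_6) \cup \{w_5w_6, vw_6\}$, giving $\sizeof{X} = 10 = 2\sizeof{\sey}$. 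Even reading your fallback generously as ``use triangles avoiding $v$,'' you treat it as a minor patch for an exceptional subcase rather than as the mechanism that must carry the entire construction, and you exhibit no triangles; so the proposed proof does not go through.
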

\begin{proof}The first statement follows immediately from Lemma~\ref{lem:wke} and
  Proposition~\ref{prop:weak-comp-matching}.  For the second
  statement, we again split into cases according to $d(v)$. Let $H =
  G[N(v)]$, and let $w_1, \ldots, w_{d(v)}$ be the vertices of $H$,
  indexed so that $E(\comp{H}) = \{w_1w_2, w_3w_4\}$.

  \begin{figure}
    \centering
    \begin{tabular}{cc}
    \begin{tikzpicture}
      \avoint{v} (v) at (0cm, 1.5cm) {};
      \bpoint{w_5} (w5) at (270 : 1cm) {};
      \bpoint{w_3} (w3) at (342 : 1cm) {};
      \rpoint{w_1} (w1) at (54 : 1cm) {};
      \lpoint{w_4} (w4) at (126 : 1cm) {};
      \bpoint{w_2} (w2) at (198 : 1cm) {};
      \begin{tris}
        \draw[triangle1] (w5.center) -- (w2.center) -- (w3.center) -- cycle;
        \draw[triangle1] (w5.center) -- (w1.center) -- (w4.center) -- cycle;
        \draw[triangle1] (w1.center) -- (w3.center) ..controls (18:2cm) .. (v.center) -- cycle; 
        \draw[triangle1] (w4.center) -- (w2.center) ..controls (162:2cm) .. (v.center) -- cycle; 
      \end{tris}
      \draw (w2) -- (w3);
    \end{tikzpicture}&
    \begin{tikzpicture}
      \apoint{w_1} (w1) at (0cm, 0cm) {};
      \apoint{w_3} (w3) at (1cm, 0cm) {};
      \apoint{w_5} (w5) at (2cm, 0cm) {};
      \bpoint{w_4} (w4) at (0cm, -1cm) {};
      \bpoint{w_2} (w2) at (1cm, -1cm) {};
      \bpoint{w_6} (w6) at (2cm, -1cm) {};
      \begin{ontop}
        \rvoint{v} (v) at (3cm, -.5cm) {};
      \end{ontop}
      \begin{tris}
        \draw[triangle1] (v.center) -- (w1.center) -- (w4.center) -- cycle;
        \draw[triangle1] (v.center) -- (w3.center) -- (w2.center) -- cycle;
        \draw[triangle1] (v.center) -- (w5.center) -- (w6.center) -- cycle;
        \draw[triangle1] (w3.center) -- (w5.center) .. controls (1cm, .8cm) .. (w1.center) -- cycle;
        \draw[triangle1] (w2.center) .. controls ++(195:.5cm) and ++(-15:.5cm) .. (w4.center) .. controls ++(-45:1.5cm) and ++(-100:1.5cm) .. (w5.center) ..controls ++(-120:.5cm) and ++(30:.5cm) .. (w2.center) -- cycle;
      \end{tris}
      \begin{invis}
        \node[vertex, fill=white] at (-1cm, -.5cm) {};
      \end{invis}
      \draw (v.center) -- (w1.center) -- (w4.center) -- cycle;
      \draw (v.center) -- (w3.center) -- (w2.center) -- cycle;
      \draw (v.center) -- (w5.center) -- (w6.center) -- cycle;
    \end{tikzpicture}\\
    (a) Case 1: $d(v) = 5$.&
    (b) Case 2: $d(v) = 6$.
    \end{tabular}

    \caption{Triangles in Proposition~\ref{prop:comp-matching}.}
    \label{fig:comp-matching-5}
  \end{figure}
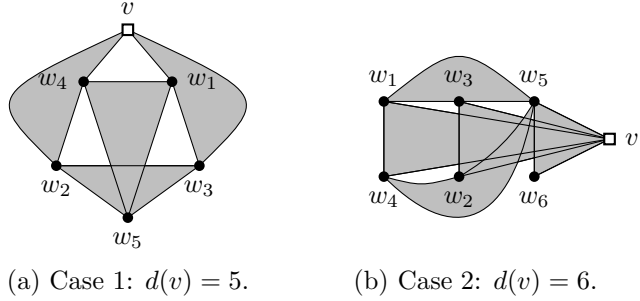
  \caze{1}{$d(v) = 5$}. Define $\sey$ and $X$ by
  \begin{align*}
    \sey &= \{vw_2v_4, vw_1w_3, w_1w_4w_5, w_2w_3w_5 \}, \\
    X &= E(H).
  \end{align*}
  The triangles in $\sey$ are illustrated in
  Figure~\ref{fig:comp-matching-5}(a).  We verify that $\{v\}$ is
  reducible using $\sey$ and $X$, verifying each condition of
  Definition~\ref{def:reducible}:
  \begin{enumerate}[(i)]
  \item $\sizeof{X} \leq 2\sizeof{\sey}$, since $\sizeof{E(H)} = 8$.
  \item Every triangle containing $v$ has its other two vertices in
    $H$, so $G-X$ has no triangle containing $v$.
  \item $X$ contains every $\sey$-edge not incident to $v$, since all
    such edges lie in $H$.
  \end{enumerate}

  \caze{2}{$d(v) = 6$}. Define $\sey$ and $X$ by
  \begin{align*}
    \sey &= \{  vw_1w_4, vw_2w_3, vw_5w_6, w_1w_3w_5, w_2w_4w_5 \}, \\
    X &= E(H - w_6) \cup \{ w_5w_6, vw_6 \}.
  \end{align*}
  The triangles in $\sey$ are illustrated in Figure~\ref{fig:comp-matching-5}(b).
  We verify that $\{v\}$ is reducible using $\sey$ and $X$, verifying
  each condition of Definition~\ref{def:reducible}:
  \begin{enumerate}[(i)]
  \item By construction, $\sizeof{X} \leq 2\sizeof{\sey}$.
  \item Since all edges of $H$ not incident to $w_6$ lie in $X$, any
    triangle of $G-X$ containing $v$ also contains $w_6$.  Since $vw_6
    \in X$, it follows that there is no such triangle.
  \item By inspection, $X$ contains every $\sey$-edge that is not incident to $v$.\qedhere
  \end{enumerate}
\end{proof}
In the rest of the paper, we will typically omit explicit
verifications of Conditions~(i) and (iii) of
Definition~\ref{def:reducible}, since they usually follow from a quick
inspection of the definitions.

Next we show that a robust graph with no reducible set can have no
edge joining ``low-degree'' vertices. The idea is simple: if $u$ and
$v$ are adjacent low-degree vertices and neither $\{u\}$ nor $\{v\}$
is reducible, then we have a lot of information about the structure of
$G[N(u)]$ and $G[N(v)]$, which will allow us to show that the set
$\{u,v\}$ is reducible.
\begin{proposition}\label{prop:red-pair-6}
  \robG.  If $uv \in E(G)$ with $d(u) \leq 6$
  and $d(v) \leq 6$, then one of $\{u\}$, $\{v\}$, or $\{u,v\}$
  is reducible in $G$.
\end{proposition}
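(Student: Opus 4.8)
The plan is to assume that neither $\{u\}$ nor $\{v\}$ is reducible and then exhibit explicit sets $\sey$ and $X$ witnessing that $\{u,v\}$ is reducible. First I would record the structural consequences of robustness and of Proposition~\ref{prop:comp-matching}. Since $d(u), d(v) \le 6 < 10$ and $G$ is robust, both $G[N(u)]$ and $G[N(v)]$ are connected of order at least $5$, so $d(u), d(v) \in \{5,6\}$. If $\{u\}$ is not reducible, then Proposition~\ref{prop:comp-matching} forces $\comp{G[N(u)]}$ to have maximum degree at most $1$ and to avoid having exactly two edges; thus $\comp{G[N(u)]}$ is a matching with $0$, $1$, or $3$ edges (three edges only when $d(u) = 6$). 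The same holds for $v$.

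Next I would extract the key geometric picture: the neighborhoods of $u$ and $v$ overlap almost completely. Because $v$ is a vertex of the nearly complete graph $G[N(u)]$ and has at most one non-neighbor there, all but at most one vertex of $N(u)\setminus\{v\}$ is a common neighbor of $u$ and $v$; the symmetric statement holds for $u$ in $N(v)$. Writing $C = N(u)\cap N(v)$, $A' = N(u)\setminus(N(v)\cup\{v\})$, and $B' = N(v)\setminus(N(u)\cup\{u\})$, this gives $\sizeof{A'} \le 1$ and $\sizeof{B'} \le 1$, with $N(u) = \{v\}\cup C\cup A'$, with $N(v) = \{u\}\cup C \cup B'$, and with $\sizeof{C} \ge 3$. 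The degree and matching constraints then leave only a short list of configurations, indexed by the pair $(d(u),d(v)) \in \{5,6\}^2$ (up to the symmetry $u \leftrightarrow v$), by $\sizeof{A'}$ and $\sizeof{B'}$, and by which complement edges lie inside $C$ or at $A'$ and $B'$.

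For each configuration I would build $\sey$ and $X$ by taking $X$ to consist of essentially all edges inside the common core together with the edge $uv$ and, when present, one edge covering each private vertex $a' \in A'$ and $b' \in B'$; concretely $X$ is $E(G[C]) \cup \{uv\}$ augmented to kill the few triangles through $u$ or $v$ that use a private vertex. This choice kills every triangle $ucc'$ through the edge $cc' \in X$ and every triangle $uvc$ through the edge $uv \in X$, leaving only triangles using $A'$ or $B'$, which the augmenting edges dispatch; this verifies Condition~(ii) of Definition~\ref{def:reducible}. For the budget I would exhibit enough edge-disjoint triangles inside the dense core $\{u,v\}\cup C$ to satisfy $\sizeof{X} \le 2\sizeof{\sey}$. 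In the cleanest case $d(u)=d(v)=6$ with $\sizeof{C}=5$ and both complements empty, where $\{u,v\}\cup C$ induces a $K_7$, a Steiner triple system on these seven vertices supplies seven edge-disjoint triangles whose internal $C$-edges are exactly $E(G[C])$, giving $\sizeof{X} = \sizeof{E(G[C])} + 1 = 11 \le 14 = 2\sizeof{\sey}$. Condition~(iii) holds because every $\sey$-edge lying inside $C$ already belongs to $X$ by construction, while $\sey$-edges incident to $u$ or $v$ need not.

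The main obstacle will be the bookkeeping of the case analysis rather than any single hard idea: in each configuration I must simultaneously find enough edge-disjoint triangles to fund $X$ and route those triangles so that their edges outside $\{u,v\}$ all land in $X$, all while a few missing edges (the complement-matching edges inside $C$ and the non-edges at $A'$ and $B'$) obstruct the most symmetric constructions. The tightest cases are those with a large complement matching (a thin $6$-vertex, with $\sizeof{E(\comp{G[N(\cdot)]})}=3$) and those with two private vertices, where the core is smaller and fewer edge-disjoint triangles are available; there the triangles through $u$ and $v$ must be combined with the private vertices and their outside neighbors to make the budget close. Verifying Conditions~(i) and~(iii) should be routine inspection in each case, as the paper has already announced it will generally suppress such checks.
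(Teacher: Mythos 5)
Your framework is the same as the paper's: assume neither $\{u\}$ nor $\{v\}$ is reducible, use Proposition~\ref{prop:comp-matching} to get $\Delta(\comp{G[N(u)]}) \leq 1$ and $\Delta(\comp{G[N(v)]}) \leq 1$, conclude that the two neighborhoods nearly coincide, and then build $\sey$ and $X$ by cases (the paper's case split on $\sizeof{N(u) \cap N(v)} \in \{3,4,5\}$ is equivalent to your parametrization by degrees and private vertices). The genuine gap is that the one recipe you commit to---put $E(G[C]) \cup \{uv\}$ plus one covering edge per private vertex into $X$, and fund it with edge-disjoint triangles \emph{inside the core} $\{u,v\} \cup C$---provably cannot close the budget in exactly the tight cases, and those cases are the entire content of the proposition. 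Concretely: if $d(u) = d(v) = 5$ and both private vertices $p \in N(u) - N[v]$ and $q \in N(v) - N[u]$ exist, then $\sizeof{C} = 3$, the core is a $K_5$, and $\nu(K_5) = 2$, so core triangles give $2\sizeof{\sey} \leq 4$, while your $X$ already contains at least $\sizeof{E(G[C])} + 3 = 6$ edges. Likewise, if $d(u) = d(v) = 6$ with $C \iso K_4$ and both private vertices present, the core is a $K_6$ with $\nu(K_6) = 4$, so the budget is $8$, while $\sizeof{X} \geq 6 + 3 = 9$. Your Steiner-triple-system computation covers only the configuration $N[u] = N[v]$ with complete core, which has slack; it says nothing about these deficit cases.

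The missing idea---which is what the paper's Cases~1 and~2 actually supply---is how to escape the core without losing the budget. One must add triangles through the private vertices, such as $upw_3$ and $vqw_2$ (these exist because $v$ is the unique non-neighbor of $p$ in $G[N(u)]$, so $p$ is adjacent to all of $C$). But each such triangle is budget-neutral, not budget-positive: it adds $2$ to $2\sizeof{\sey}$ while requiring at least two new edges in $X$, since Condition~(iii) forces $pw_3 \in X$ (both its endpoints avoid $\{u,v\}$) and killing the remaining triangles through $u$ and $p$ costs at least one more edge (the paper uses $up$). So such triangles alone never close a deficit; the paper closes it either by exact accounting ($\sizeof{X} = 8 = 2\sizeof{\sey}$ in its Case~1) or by adding one extra triangle lying entirely inside $C$ (the triangle $w_2w_3w_4$ when $C \iso K_4$), whose three edges are already in $X$ and which therefore contributes $+2$ net. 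You do remark that in the tight cases the triangles ``must be combined with the private vertices and their outside neighbors,'' but that sentence is where the proof has to happen: without exhibiting these constructions, checking their edge-disjointness, and verifying the exact accounting, the two configurations above remain unhandled.
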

\begin{proof} Without loss of generality, we may assume $d(u) \leq
  d(v)$.  Assuming that neither $\{u\}$ nor $\{v\}$ is reducible in
  $G$, we show that $\{u,v\}$ is reducible in $G$. Since neither
  $\{u\}$ nor $\{v\}$ is reducible,
  Proposition~\ref{prop:comp-matching} says that
  $\Delta(\comp{G[N(u)]}) \leq 1$ and $\Delta(\comp{G[N(v)]}) \leq 1$.
  Let $H = G[N(u) \cap N(v)]$.  Since $u,v \notin V(H)$ and $u$ has at
  most one non-neighbor in $G[N(v)]$, we have $d(v)-2 \leq
  \sizeof{V(H)} \leq d(u)-1$.  Also, $\Delta(\comp{H}) \leq 1$, since
  $\Delta(\comp{G[N(u)]}) \leq 1$.

  \caze{1}{$\sizeof{V(H)} = 3$.} In this case, $d(u) = d(v) = 5$ and
  $v$ is not a dominating vertex in $G[N(u)]$.  By
  Proposition~\ref{prop:comp-matching}, $G[N(u)]$ has precisely one
  non-edge, and likewise for $G[N(v)]$. Let $p$ the unique vertex in
  $N(u) - N[v]$, and let $q$ be the unique vertex in $N(v) - N[u]$;
  now $pv$ is the unique non-edge in $G[N(u)]$ and $qu$ is the unique
  non-edge in $G[N(v)]$. Write $V(H) = \{w_1,w_2,w_3\}$. Since $H
  \subset G[N(u)]$ and $pv$ is the unique non-edge in $G[N(u)]$, we
  have $H \iso K_3$.  \illSX{\{u,v\}}{Figure~\ref{fig:red-pair}(a)}
  \begin{align*}
    \sey &= \{uw_1w_2, vw_1w_3, upw_3, vqw_2\} \\
    X &= \{uv, vq, up, pw_3, qw_2\} \cup E(H).
  \end{align*}
  \begin{figure}
    \centering
    \begin{tikzpicture}
      \begin{scope}
        \bpoint{w_2} (w2) at (-60: 1.5cm) {};
        \apoint{w_3} (w3) at (60: 1.5cm) {};
        \bpoint{w_1} (w1) at (180: 1.5cm) {};
        \draw[xedge] (w1) -- (w2);
        \draw[xedge] (w2) -- (w3);
        \draw[xedge] (w3) -- (w1);
        \lvoint{u} (u) at (-2cm, 0cm) {};
        \lpoint{p} (p) at (-2cm, 1cm) {};
        \draw[xedge] (u) -- (p);
        \draw[xedge] (p) -- (w3);
        \avoint{v} (v) at (-.25cm, 0cm) {};
        \rpoint{q} (q) at (.25cm, 0cm) {};
        \draw[xedge] (v) -- (q);
        \draw[xedge] (q) -- (w2);
        \draw[xedge] (u) .. controls (-1.125cm, .5cm) .. (v);
        \begin{tris}
          \draw[triangle1] (u.center) ..controls ++(270:1cm) and ++(180:1cm) .. (w2.center) -- (w1.center) -- cycle;
          \draw[triangle1] (v.center) -- (w3.center) -- (w1.center) -- cycle;
          \draw[triangle1] (u.center) ..controls ++(90:1.25cm) and ++(210:.5cm) .. (w3.center) -- (p.center) -- cycle;
          \draw[triangle1] (v.center) -- (w2.center) -- (q.center) -- cycle;
        \end{tris}
        \node at (-.5cm, -2.5cm) {(a) $\sey, X$ in Case 1.};
      \end{scope}
      \begin{scope}[xshift=5cm]
        \begin{ontop}
          \bpoint{w_2} (w2) at (-60: 1.5cm) {};
          \apoint{w_3} (w3) at (60: 1.5cm) {};
          \bpoint{w_1} (w1) at (180: 1.5cm) {};
          \rpoint{w_4} (w4) at (2cm, 0cm) {};          
          \lvoint{u} (u) at (-2cm, 0cm) {};
          \lpoint{p} (p) at (-2cm, 1cm) {};
          \avoint{v} (v) at (-.25cm, 0cm) {};
          \rpoint{q} (q) at (.25cm, 0cm) {};
        \end{ontop}
        \draw[xedge] (w1) -- (w2);
        \draw[xedge] (w2) -- (w3);
        \draw[xedge] (w3) -- (w1);
        \draw[xedge] (w2) -- (w4);
        \draw[xedge] (w3) -- (w4);
        \draw[xedge] (u) -- (p);
        \draw[xedge] (p) -- (w3);
        \draw[xedge] (v) -- (q);
        \draw[xedge] (q) -- (w2);
        \draw[xedge] (u) .. controls (-1.125cm, .5cm) .. (v);
        \draw[xedge] (w1) .. controls (.75cm, -.5cm) .. (w4);  
        \draw (u.center) ..controls ++(90:1.25cm) and ++(210:.5cm) .. (w3.center) -- (p.center) -- cycle;
        \begin{tris}
          \draw[triangle1] (u.center) ..controls ++(270:1cm) and ++(180:1cm) .. (w2.center) -- (w1.center) -- cycle;
          \draw[triangle1] (v.center) -- (w3.center) -- (w1.center) -- cycle;
          \draw[triangle1] (u.center) ..controls ++(90:1.25cm) and ++(210:.5cm) .. (w3.center) -- (p.center) -- cycle;
          \draw[triangle1] (v.center) -- (w2.center) -- (q.center) -- cycle;
          \draw[triangle1] (w2.center) -- (w3.center) -- (w4.center) -- cycle;
          \draw[triangle1] (u.center) .. controls ++(60:2cm) and ++(90:3.5cm) .. (w4) .. controls ++(90:3.5cm) and ++(120:2cm) .. (v.center) .. controls (-1.125cm, .5cm) .. (u) -- cycle;
        \end{tris}
        \node at (0cm, -2.5cm) {(b) Largest possible $\sey, X$ in Case 2.};
      \end{scope}
    \end{tikzpicture}      
    \caption{Triangles and edges in Proposition~\ref{prop:red-pair-6}.}
    \label{fig:red-pair}
  \end{figure}
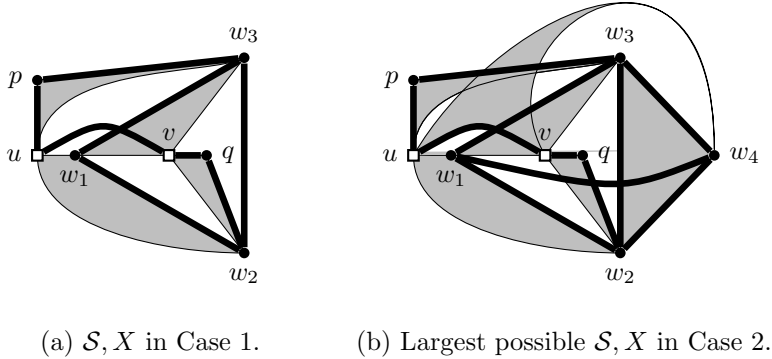%
  We quickly check Condition~(ii) of
  Definition~\ref{def:reducible}. Let $T$ be a triangle in $G-X$
  containing a vertex of $\{u,v\}$, say the vertex $u$. Since $E(H)
  \subset X$, at most one vertex of $H$ lies in $T$, so $T$ must
  contain a vertex in $\{v,p,q\}$. Since $uq \notin E(H)$ and $\{uv,
  up\} \subset X$, no such triangle exists. If $T$ instead contains
  $v$, a similar argument holds.

  \caze{2}{$\sizeof{V(H)} = 4$.} Since $\Delta(\comp{H}) \leq 1$, $H$
  contains incident edges $w_1w_2$ and $w_1w_3$; let $w_4$ be the
  remaining vertex of $H$. We build a set $\sey$ of triangles and a
  set $X$ of edges step-by-step as follows: initially, $\sey =
  \{uw_1w_2, vw_1w_3, uvw_4\}$ and $X = E(H) \cup \{uv\}$. Note that
  initially, $2\sizeof{\sey} - \sizeof{X} \geq -1$, with equality
  holding if and only if $H \iso K_4$. We augment $\sey$ and $X$
  according to the following rules:
  \begin{itemize}
  \item If there exists $p \in N(u) - N[v]$, then add the 
    triangle $upw_3$ to $\sey$ and add the edges $pu, pw_3$ to $X$.
  \item If there exists $q \in N(v) - N[u]$, then add the 
    triangle $vqw_2$ to $\sey$ and add the edges $qv, qw_2$ to $X$.
  \item If $H \iso K_4$, add the triangle $w_2w_3w_4$ to $\sey$.
  \end{itemize}
  Figure~\ref{fig:red-pair}(b) shows the $\sey$ and $X$ obtained when
  $p$ and $q$ both exist and $H \iso K_4$.  Note that if $p$ exists,
  then $p$ is the unique vertex of $N(u)-N[v]$, since $v$ has at most
  one non-neighbor in $G[N(u)]$; likewise for $q$.  In all cases, we
  end with $\sizeof{X} \leq 2\sizeof{\sey}$. The verification of
  Condition~(ii) is similar to Case~1.

  \caze{3}{$\sizeof{V(H)}=5$}. In this case, $d(u)=d(v)=6$ and $N[u]=N[v]$.
  Since $\Delta(\comp{H}) \leq 1$, $H$ contains a subgraph $H'$ isomorphic to $C_4$,
  with vertices $a,b,c,d$ in order. Let $w$ be the remaining vertex of $H$.
  \illSX{\{u,v\}}{Figure~\ref{fig:red-pair-6-cycle}}
  \begin{align*}
    \sey &= \{ uab, ucd, vbc, vad, uvw \}; \\
    X &= \{ uw, vw, ua, ub, vc, vd \} \cup E(H').
  \end{align*}%
  \begin{figure}
    \centering
    \begin{tikzpicture}
      \lpoint{a} (a) at (-2cm, 0cm) {};
      \apoint{b} (b) at (0cm, 2cm) {};
      \rpoint{c} (c) at (2cm, 0cm) {};
      \bpoint{d} (d) at (0cm, -2cm) {};
      \vpoint{above}{u} (u) at (-1cm, 0cm) {};
      \vpoint{above}{v} (v) at (1cm, 0cm) {};
      \draw (u) -- (v);
      \apoint{w} (w) at (0, .5cm) {};
      \begin{edges}
        \draw (u.center) -- (a.center) -- (b.center) -- cycle;
        \draw (v.center) -- (b.center) -- (c.center) -- cycle;
        \draw (u.center) -- (v.center) -- (w.center) -- cycle;
        \draw (u.center) .. controls (.5cm, -1cm) .. (c.center) -- (d.center) -- cycle;
        \draw (v.center) .. controls (-.5cm, -1cm) .. (a.center) -- (d.center) -- cycle;
        \draw[xedge] (a.center) -- (b.center) -- (c.center) -- (d.center) -- cycle;
        \draw[xedge] (u.center) -- (w.center);
        \draw[xedge] (v.center) -- (w.center);
        \draw[xedge] (u.center) -- (a.center);
        \draw[xedge] (u.center) -- (b.center);
        \draw[xedge] (v.center) -- (c.center);
        \draw[xedge] (v.center) -- (d.center);
      \end{edges}
      \begin{tris}
        \draw[triangle1] (u.center) -- (a.center) -- (b.center) -- cycle;
        \draw[triangle1] (v.center) -- (b.center) -- (c.center) -- cycle;
        \draw[triangle1] (u.center) -- (v.center) -- (w.center) -- cycle;
        \draw[triangle1] (u.center) .. controls (.5cm, -1cm) .. (c.center) -- (d.center) -- cycle;
        \draw[triangle1] (v.center) .. controls (-.5cm, -1cm) .. (a.center) -- (d.center) -- cycle;
      \end{tris}
    \end{tikzpicture}
    \caption{$\sey,X$ in Case~3 of Proposition~\ref{prop:red-pair-6}.}
    \label{fig:red-pair-6-cycle}
  \end{figure}%
  We again check Condition~(ii) of Definition~\ref{def:reducible}.
  Any triangle containing the edge $uv$ is of the form $uvz$, where $z
  \in V(H)$. For every such $z$, either $uz \in X$ or $vz \in X$;
  hence $G-X$ has no triangle containing $uv$.  Now suppose $T$ is a
  triangle containing $u$ but not $v$. Clearly $a,b,w \notin T$; hence
  $T = ucd$, but $cd \in X$. Hence $G-X$ has no triangle containing
  $u$. A similar argument holds for $v$.
\end{proof}
\begin{corollary}\label{cor:few6minus}
  \robG. If $v$ is a $k$-vertex with $k \in \{7,8,9\}$ and $v$ has
  more than $d(v)-4$ neighbors that are $6^-$-vertices, then $G$ 
  has a reducible set.
\end{corollary}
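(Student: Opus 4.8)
The plan is to combine the two immediately preceding results—Proposition~\ref{prop:red-pair-6} and Corollary~\ref{cor:ind}—exploiting the fact that the ``$d(v)-4$'' threshold is calibrated precisely to the ``$n-3$'' hypothesis of Corollary~\ref{cor:ind}. Let $v$ be a $k$-vertex with $k \in \{7,8,9\}$, and let $S$ denote the set of $6^-$-vertices in $N(v)$, so that the hypothesis reads $\sizeof{S} > k - 4$; since degrees are integers this means $\sizeof{S} \geq k - 3$.

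First I would dispose of the case where $S$ contains two adjacent vertices. If $w, w' \in S$ with $ww' \in E(G)$, then since $d(w) \leq 6$ and $d(w') \leq 6$, Proposition~\ref{prop:red-pair-6} immediately gives that one of $\{w\}$, $\{w'\}$, $\{w,w'\}$ is reducible, so $G$ has a reducible set and we are finished in this case.

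Otherwise $S$ is an independent set in $H := G[N(v)]$, and I would apply Corollary~\ref{cor:ind} to $H$. This amounts to checking three hypotheses: $H$ has $n = k$ vertices with $n \geq 7 \geq 6$; $H$ is connected, because $G$ is robust and $d(v) = k < 10$ forces $G[N(v)]$ to have a single component (two components would each have order at least $5$, giving $d(v) \geq 10$); and $H$ has an independent set, namely $S$, of size at least $k - 3 = n - 3$, which in particular contains an independent set of size exactly $n-3$. Corollary~\ref{cor:ind} then yields $H \in \wke$, and Lemma~\ref{lem:wke} shows that $\{v\}$ is reducible, so once more $G$ has a reducible set.

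This corollary is short precisely because the substantive structural work already lives in Proposition~\ref{prop:red-pair-6} and Corollary~\ref{cor:ind}; there is no genuine obstacle to overcome, only assembly. The two points that must be gotten right are the arithmetic—that ``more than $d(v)-4$'' converts to ``at least $d(v)-3 = n-3$'', matching the exact independent-set size demanded by Corollary~\ref{cor:ind}—and the justification of connectivity of $G[N(v)]$ from robustness together with $k < 10$.
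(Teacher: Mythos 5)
Your proof is correct and follows essentially the same route as the paper: use Proposition~\ref{prop:red-pair-6} to reduce to the case where the $6^-$-neighbors form an independent set, note connectivity of $G[N(v)]$ from robustness, and then apply Corollary~\ref{cor:ind} and Lemma~\ref{lem:wke} to conclude that $\{v\}$ is reducible. The extra details you supply (the arithmetic converting ``more than $d(v)-4$'' into ``at least $d(v)-3$'' and the justification that $d(v)<10$ forces $G[N(v)]$ to be connected) are exactly the points the paper leaves implicit.
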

\begin{proof}
  By Proposition~\ref{prop:red-pair-6}, we may assume that the
  $6^-$-neighbors of $v$ form an independent set in $G[N(v)]$. Since
  $G$ is robust, $G[N(v)]$ is connected. By Corollary~\ref{cor:ind},
  if $G[N(v)]$ has an independent set of size $d(v)-3$, then
  \iswke{G[N(v)]}. Thus $\{v\}$ is reducible, by Lemma~\ref{lem:wke}.
\end{proof}
\section{A Weaker Result: $\mad(G) < 25/4$}\label{sec:625}
We now have sufficient tools to prove the following theorem, which is
weaker than Theorem~\ref{thm:mainthm} but still strong enough for many
of the applications in Section~\ref{sec:consequences}.  In particular,
this theorem is strong enough to imply Corollary~\ref{cor:toroidal} on
toroidal graphs, Theorem~\ref{thm:k33} on $K_{3,3}$-subdivision-free graphs,
and Theorem~\ref{thm:k5sub} on $K_5$-subdivision-free graphs.
\begin{theorem}\label{thm:weakthm}
  If $\mad(G) < 25/4$, then $\tau(G) \leq 2\nu(G)$.
\end{theorem}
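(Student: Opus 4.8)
The plan is to follow the proof of Theorem~\ref{thm:mainthm} verbatim in outline, replacing only the discharging step (the analogue of Lemma~\ref{lem:has-reducible}) by a simpler one tuned to the threshold $25/4$, and relying only on the three facts available from Sections~\ref{sec:main}--\ref{sec:low}: that a minimal counterexample is robust (Lemma~\ref{lem:robust}), that its $6^-$-vertices form an independent set (\rlpart{ind6}, i.e.\ Proposition~\ref{prop:red-pair-6}), and that each vertex of degree $7$, $8$, or $9$ has at most $d(v)-4$ neighbors that are $6^-$-vertices (\rlpart{minus4}, i.e.\ Corollary~\ref{cor:few6minus}). Crucially, none of the subsumption- or thinness-based parts of Lemma~\ref{lem:redlem} are needed, so the argument avoids the definitions flagged as skippable.

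First I would reduce to a discharging statement exactly as in the main proof. If the theorem fails, take a minimal counterexample $G$; since $\mad$ cannot increase under taking subgraphs, $G$ is in fact a minimal counterexample to Tuza's~Conjecture among all graphs, so by Lemma~\ref{lem:robust} it is robust, and by Lemma~\ref{lem:reducible-wins} it has no reducible set (any reducible set would lift a proper subgraph's solution, valid by minimality since $\mad(G') \le \mad(G) < 25/4$, to a solution on $G$). It then suffices to show that a robust graph with no reducible set has average degree at least $25/4$, contradicting $\mad(G) < 25/4$.

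For the discharging, assign each vertex its degree as charge and apply the single rule that every $6^-$-vertex takes charge $\tfrac{1}{4}$ from each of its neighbors. By independence of the $6^-$-vertices, no $6^-$-vertex loses charge, while each gains $\tfrac{1}{4}$ per neighbor: a $5$-vertex reaches $5 + 5\cdot\tfrac{1}{4} = \tfrac{25}{4}$ and a $6$-vertex reaches $6 + 6\cdot\tfrac{1}{4} = \tfrac{15}{2} \ge \tfrac{25}{4}$. For a $7$-vertex, the bound of at most three $6^-$-neighbors gives a loss of at most $\tfrac{3}{4}$, leaving $7 - \tfrac{3}{4} = \tfrac{25}{4}$; an $8$-vertex loses at most $1$ and a $9$-vertex at most $\tfrac{5}{4}$, both leaving strictly more than $\tfrac{25}{4}$; and a $k$-vertex with $k \ge 10$ loses at most $k/4$, leaving $\tfrac{3k}{4} \ge \tfrac{15}{2}$. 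Hence every vertex finishes with charge at least $\tfrac{25}{4}$, yielding the desired contradiction.

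I do not expect a genuine obstacle internal to this argument; its content lives entirely in the supporting lemmas, which are assumed. The only thing to verify is the arithmetic, and it is worth noting where the constant $25/4$ comes from: the $5$-vertex requirement $5 + 5x = t$ and the (tight) $7$-vertex requirement $7 - 3x = t$ together force $x = \tfrac{1}{4}$ and $t = \tfrac{25}{4}$. Thus these two vertex types are exactly the binding cases, the $8^+$-vertices have slack, and $25/4$ is the best threshold achievable with a uniform charging rule of this form.
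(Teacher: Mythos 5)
Your proposal is correct and matches the paper's own proof essentially verbatim: the same reduction via Lemma~\ref{lem:robust}, Lemma~\ref{lem:reducible-wins}, and minimality, the same discharging rule (each $6^-$-vertex takes $1/4$ from every neighbor), and the same use of Proposition~\ref{prop:red-pair-6} and Corollary~\ref{cor:few6minus} to verify that every vertex ends with charge at least $25/4$. Your closing remark identifying the $5$-vertices and $7$-vertices as the binding cases is a nice (correct) observation that the paper does not state explicitly, but it does not change the argument.
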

\begin{proof}
  Assuming that $G$ has no reducible set, we use the discharging method
  to show that $G$ has average degree at least $25/4$. Give every vertex $v$
  initial charge $d(v)$. We apply the following discharging rule:
  \begin{itemize}
  \item Every $6^-$-vertex takes charge $1/4$ from every neighbor.
  \end{itemize}
  We claim that every vertex has final charge at least $25/4$, yielding
  average degree at least $25/4$ in $G$.

  First we consider the $6^-$-vertices. By Lemma~\ref{lem:robust}, $G$
  is robust, so $\delta(G) \geq 5$, and by Proposition~\ref{prop:red-pair-6},
  the $6^-$-vertices form an independent set. Hence all $5$-vertices
  end with charge $25/4$, and all $6$-vertices end with charge $30/4$.

  Next we consider the $k$-vertices for $k \in \{7,8,9\}$. By
  Corollary~\ref{cor:few6minus}, if $v$ is such a vertex, then $v$ has
  at most $k-4$ neighbors that are $6^-$-vertices.  Hence $v$ has
  final charge at least $3k/4+1$. Since $k \geq 7$, this implies that
  $v$ has final charge at least $25/4$.

  Finally, we consider the $10^+$-vertices. Such vertices have final
  charge at least $3k/4$, which is at least $25/4$, as desired.
\end{proof}
In the remaining section, we will improve the bound $\mad(G) < 25/4$ to $\mad(G) < 7$.
\section{Subsumption and Related Bounds}\label{sec:subsume}
Recall the following definitions from Section~\ref{sec:main}:
\begin{definition}
  A vertex $u$ \emph{subsumes} a vertex $v$ if $N[u] \supset N[v]$.
\end{definition}
\begin{definition}
  A $6$-vertex $v$ is \emph{thin} if $\overline{G[N(v)]}$ contains
  a matching of size $3$.
\end{definition}
The motivation for these definitions is as follows: when $u$ subsumes
a $6^-$-vertex $v$, having $d(v)-1$ neighbors of $v$ in $G[N(u)]$
leads to better bounds on the number of $6^-$-neighbors of $u$.  Thus,
in the discharging rule, such a vertex $u$ can give away a lot of
charge to the vertices it subsumes, since not many other
$6^-$-neighbors will place demands on it. Conversely, if $u$ subsumes
no $6^-$-vertex, then the bounds on the number of $6^-$-neighbors are
weaker, but since $u$ does not subsume its neighbors, they need not
demand much charge from $u$.
\begin{lemma}\label{lem:bigconq}
  \robrem.  If a $10^+$-vertex $v$ subsumes a $6^-$-vertex $w$, then
  at most $d(v)-6$ neighbors of $v$ are $6^-$-vertices.
\end{lemma}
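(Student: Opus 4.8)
The plan is to argue by contradiction. Suppose $v$ has at least $d(v)-5$ neighbors that are $6^-$-vertices. Write $n=d(v)$, let $L$ be the set of $6^-$-neighbors of $v$, and let $B = N(v)\setminus L$, so $|B|\le 5$. By Proposition~\ref{prop:red-pair-6} the set $L$ is independent, so $B$ is a vertex cover of $G[N(v)]$. The goal is to produce a reducible set, contradicting the hypothesis; by Lemma~\ref{lem:wke} it suffices either to show that some nonempty union of components of $G[N(v)]$ lies in $\wke$ (making an edge set reducible), or that $G[N(v)]$ itself lies in $\wke$ (making $\{v\}$ reducible).

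First I would record two structural facts. (a) Every $\ell\in L$ has at most one neighbor outside $N[v]$: since $\{\ell\}$ is not reducible, Proposition~\ref{prop:comp-matching} gives $\Delta(\comp{G[N(\ell)]})\le 1$, and the degree of $v$ in $\comp{G[N(\ell)]}$ is exactly the number of neighbors of $\ell$ lying outside $N[v]$. As $G$ is robust, $d(\ell)\ge 5$, and since $N(\ell)\cap N(v)\subset B$, the vertex $\ell$ has at least $d(\ell)-2\ge 3$ neighbors in $B$. (b) From subsumption, $A:=N(w)\cap N(v)\subset B$; applying Proposition~\ref{prop:comp-matching} to $w$ shows that $v$ is isolated in $\comp{G[N(w)]}$, whence $\comp{G[A]}$ is a matching of size $0$ or $1$. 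Thus $G[A]$ is a clique or a clique minus one edge on $|A|=d(w)-1\in\{4,5\}$ vertices; in particular $B$ contains a near-clique on at least four vertices.

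Next I would dispose of the case that $G[N(v)]$ is disconnected. Because $G$ has no reducible set, Lemma~\ref{lem:wke} forbids any component of $G[N(v)]$ from lying in $\wke$; hence, by Corollary~\ref{cor:small}, Proposition~\ref{prop:weak-comp-matching}, and Corollary~\ref{cor:ind}, a component on $m\ge 6$ vertices contains at most $m-4$ vertices of $L$, while a component on exactly $5$ vertices (necessarily $K_5$ minus a matching) contains at most $2$. Since robustness rules out components of order at most $4$, summing these bounds over the components and using $n\ge 10$ yields $|L|\le n-6$ in every disconnected case, contradicting $|L|\ge n-5$.

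It remains to treat the connected case, which is the heart of the argument. Here $B$ is a vertex cover of size at most $5$ that contains the near-clique $A$, and every $\ell\in L$ is adjacent to at least three — equivalently, misses at most two — of the vertices of $B$. I would again show $G[N(v)]\in\wke$, contradicting that no component is weak K\"onig--Egerv\'ary. If $\beta(G[N(v)])\le 3$ this is immediate from Corollary~\ref{cor:ind}. Otherwise I would prove $\alpha'(G[N(v)])=\beta(G[N(v)])$, so that $G[N(v)]$ is K\"onig--Egerv\'ary and hence in $\wke$: the dense adjacency from $L$ into $B$ yields, via Hall's condition, a matching saturating all but a bounded number of the vertices of $B$, and the internal edges of the clique $A$ are used precisely to match up the few $B$-vertices lacking a private partner in $L$, producing a matching of size $\beta$. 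The main obstacle is exactly this final matching construction: checking Hall's condition for the bipartite $L$--$B$ part and verifying that $A$ always supplies enough internal edges to complete the matching requires an elementary but careful case analysis according to $|A|\in\{4,5\}$, $|B|\in\{4,5\}$, and the $B$-adjacencies of the $\ell$'s. This is the step in which subsumption is essential: without the clique $A$ supplying internal edges, the connected case would only reach the weaker bound $d(v)-4$.
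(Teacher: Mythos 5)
Your preliminary structure is sound and parallels the paper: the partition of $N(v)$ into the $6^-$-neighbors $L$ and $B=N(v)\setminus L$, the independence of $L$, the bound $d_B(\ell)\ge d(\ell)-2\ge 3$ for $\ell\in L$, and the fact that subsumption forces $N(w)-\{v\}\subset B$ (hence $d_B(w)\ge 4$) are exactly the ingredients the paper extracts; your disconnected-case reduction and the subcase $\beta(G[N(v)])\le 3$ are also fine. The gap is in the case you yourself call the heart of the argument and leave as an unexecuted ``careful case analysis'': you propose to prove $\alpha'(G[N(v)])=\beta(G[N(v)])$, i.e.\ that $G[N(v)]$ is a K\"onig--Egerv\'ary graph, by using the internal edges of the near-clique $N(w)\cap N(v)$ to complete a matching of size $\beta$. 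This step cannot be carried out, because there are configurations consistent with \emph{every} fact you derived in which $G[N(v)]$ is not K\"onig--Egerv\'ary.

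Concretely, take $d(v)=10$, $B=\{b_1,\dots,b_5\}$, $L=\{w,z_1,z_2,z_3,z_4\}$, where each $z_i$ satisfies $N(z_i)\cap N(v)=\{b_1,b_2,b_3\}$, where $N(w)\cap N(v)=\{b_1,b_2,b_4,b_5\}$ induces a $K_4$, and where $b_1b_2b_3$ is a triangle with $b_3$ nonadjacent to $b_4,b_5$. All of your constraints hold ($L$ independent, $d_B(z_i)=3$, $d_B(w)=4$, $G[N(w)\cap N(v)]$ a clique, every $b\in B$ has a neighbor in $L$), and $G[N(v)]$ is connected. Here $\beta(G[N(v)])=5$: covering the complete bipartite graph between $\{z_1,\dots,z_4\}$ and $\{b_1,b_2,b_3\}$ costs three vertices, and the vertex-disjoint triangle $wb_4b_5$ costs two more. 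But $\alpha'(G[N(v)])=4$: a matching of size $5$ would have to consist of five edges each using exactly one vertex of $B$ (since $B$ is a cover of size $5$, no edge inside $B$ could appear), hence would match $b_4$ and $b_5$ to distinct vertices of $L$ --- impossible, as their only $L$-neighbor is $w$. Your proposed repair, using the internal clique edge $b_4b_5$, does saturate $B$, but the resulting matching $\{z_1b_1,z_2b_2,z_3b_3,b_4b_5\}$ has size $4<\beta$, precisely because an internal edge consumes two vertices of $B$; so ``producing a matching of size $\beta$'' is hopeless here. What is true, and what the paper proves instead, is the strictly weaker property \iswke{G[N(v)]}: take $M=\{z_1b_1,z_2b_2,z_3b_3,wb_4\}$ and $Q=B-\{b_4\}$, so that $Q$ is a vertex cover of $G[N(v)]-M$ with $\sizeof{Q}\le\sizeof{M}$. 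This is exactly Case 1 of the paper's proof (the case $\sizeof{\bigcup_{z\in A-w}N_B(z)}=3$), and your plan has no counterpart for it; the paper needs the full flexibility of Definition~\ref{def:wke}, reserving the K\"onig--Egerv\'ary conclusion for its Case 2, where Hall's condition for $B$ really does hold. Relatedly, your closing claim misidentifies what subsumption buys: the paper never uses the internal edges of $G[N(w)\cap N(v)]$ at all. Subsumption enters only through the single inequality $d_B(w)\ge 4$, which is what makes both the greedy matching of size $4$ and the Hall's-condition case go through.
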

\begin{proof}
  Assume to the contrary that at least $d(v)-5$ neighbors of $v$ are
  $6^-$-vertices. We obtain a contradiction by proving
  \iswke{G[N(v)]}, implying that $\{v\}$ is reducible. Let $A$ be the
  set of $6^-$-neighbors of $v$, and let $B = N(v) - A$; note that
  $\sizeof{A} \geq d(v) - 5 \geq 5 \geq \sizeof{B}$. Since $G$ has no
  reducible set, Proposition~\ref{prop:red-pair-6} implies that $A$ is
  an independent set and that $N(w) - \{v\} \subset B$. Since $A$ is
  independent, $B$ is a vertex cover in $G[N(v)]$.

  By Proposition~\ref{prop:comp-matching},
  $\Delta(\comp{G[N(a)]}) \leq 1$ for all $a \in A$; in particular,
  since $v \in N(a)$, we have $d_{G[N(a)]}(v) \geq d(a) - 2$. Since
  $d_{G[N(a)]}(v) = \sizeof{N(a) \cap N(v)} = d_{G[N(v)]}(a)$, we have
  $d_{G[N(v)]}(a) \geq d(a) - 2$. Since $A$ is independent and each
  $d(a) \geq 5$, this implies $d_B(a) \geq 3$ for all $a \in
  A$. Similarly, $d_B(w) \geq 4$, since $v$ is a dominating vertex in
  $G[N(w)]$ and $d(w) \geq 5$.

  We first argue that $\alpha'(G[N(v)]) \geq 4$ by greedily
  constructing a matching of size $4$. Let $a_1, a_2, a_3$ be distinct
  elements of $A-w$. Since each $d_B(a_i) \geq 3$, for each $i$ we may
  choose $b_i \in N_B(a_i)$ distinct from all earlier $b_i$. Since
  $d_B(w) \geq 4$, we can take $b' \in B - \{b_1,b_2,b_3\}$. Now
  $\{a_1b_1, a_2b_2, a_3b_3, wb'\}$ is the desired matching of size
  $4$. If $\sizeof{B} = 4$, then this implies \iske{G[N(v)]}; thus we
  may assume $\sizeof{B} = 5$. If $d_A(b) = 0$ for some $b \in B$,
  then $B-b$ is a vertex cover in $G[N(v)]$, which again implies
  \iske{G[N(v)]}; thus we may also assume $d_A(b) > 0$ for all $b \in
  B$.

  \caze{1}{$\sizeof{\bigcup_{z \in A-w}N_B(z)} = 3$.}  Let
  $z_1,z_2,z_3$ be distinct vertices in $A-w$, and let $b_1,b_2,b_3$
  be distinct vertices in $\bigcup_{z \in A-w}N_B(z)$.  Let $b' \in
  N_B(w) - \{b_1,b_2,b_3\}$. The set $B - b'$ is a vertex cover of
  size $4$ in $G[N(v)] - \{wb', z_1b_1, z_2b_2, z_3b_3\}$, so
  \iswke{G[N(v)]}. 

  \caze{2}{$\sizeof{\bigcup_{z \in A-w}N_B(z)} > 3$.} We verify
  Hall's~Condition for $B$. Take any $B_0 \subset B$.  If
  $\sizeof{B_0} > 2$, then $N_A(B_0) = A$, since each $a \in A$ has at
  most $2$ non-neighbors in $B$. If $\sizeof{B_0} = 1$, then
  $\sizeof{N_A(B_0)} \geq 1$ since $d_A(b) > 0$ for all $b \in
  B$. 

  Now suppose $\sizeof{B_0} = 2$. Since $d_B(w) \geq 4$, we have $w
  \in N_A(B_0)$.  For $z \in A-w$, if $z \notin N_A(B_0)$, then
  $N_B(z) = B-B_0$, since $d_B(z) \geq 3$. Since $\sizeof{\bigcup_{z
      \in A-w}N_B(z)} > 3$, the equality $N_B(z) = B-B_0$ cannot hold
  for all $z \in A-w$, so $\sizeof{N_A(B_0)} \geq 2$.  By
  Hall's~Theorem, $\alpha'(G[N(v)]) \geq 5$, so \iske{G}.
\end{proof}
When $d(v) = 9$ a similar statement holds, but more nuance is
required, since we are no longer guaranteed that $\sizeof{A} \geq \sizeof{B}$.
\begin{lemma}\label{lem:9conq}
  \robrem. Every $9$-vertex subsumes at most three $6^-$-vertices;
  furthermore, if equality holds, then it is adjacent to no other
  $6^-$-vertex.
\end{lemma}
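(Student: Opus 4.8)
The plan is to follow the strategy of Lemma~\ref{lem:bigconq}: reduce the statement to a claim about the bipartite structure of $G[N(v)]$ between the low- and high-degree neighbors of $v$, and prove that claim by producing a witness for \iswke{G[N(v)]} (hence reducibility of $\{v\}$, by Lemma~\ref{lem:wke}) whenever the structure allows it. Write $A$ for the set of $6^-$-neighbors of $v$, set $B = N(v) \setminus A$, and let $S \subseteq A$ be the set of $6^-$-vertices subsumed by $v$; I must show $|S| \leq 3$, with $|S| = 3$ forcing $A = S$. First I would record the facts that drive the argument. Since $G$ has no reducible set, Proposition~\ref{prop:red-pair-6} shows $A$ is independent, so $B$ is a vertex cover of $G[N(v)]$; Proposition~\ref{prop:comp-matching} gives $\Delta(\comp{G[N(a)]}) \leq 1$, whence $d_B(a) \geq d(a) - 2 \geq 3$ for every $a \in A$; and for each subsumed $s \in S$ we have $N(s) - \{v\} \subseteq B$ with $d_B(s) = d(s) - 1 \geq 4$. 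Finally Corollary~\ref{cor:few6minus} gives $|A| \leq 5$. The conclusion is automatic when $|A| \leq 3$ (there $|S| \leq |A| \leq 3$, and $|S| = 3$ forces $A = S$), so it suffices to prove that $|A| \geq 4$ implies $|S| \leq 2$.

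The case $|A| = 5$ (so $|B| = 4$) mirrors Lemma~\ref{lem:bigconq}. Assuming $|S| \geq 3$, the three subsumed vertices are each adjacent to all of $B$, so Hall's condition holds for saturating $B$ from $A$: every singleton, pair, and triple of $B$ sees at least the three subsumed vertices, and all of $B$ is seen by all of $A$. Thus $\alpha'(G[N(v)]) \geq 4 = |B| \geq \beta(G[N(v)])$, so \iske{G[N(v)]}, whence \iswke{G[N(v)]} and $\{v\}$ is reducible by Lemma~\ref{lem:wke} --- a contradiction. Hence $|A| = 5$ forces $|S| \leq 2$.

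The delicate case is $|A| = 4$ (so $|B| = 5$) with $|S| \geq 3$. If some $b \in B$ has no neighbor in $A$, then $B - b$ is a vertex cover of $G[N(v)]$ of size $4$, while the large $B$-degrees of the subsumed vertices let Hall saturate $A$ by a matching of size $4$; again \iske{G[N(v)]}, a contradiction. The remaining subcase is the genuine obstacle: every $b \in B$ has an $A$-neighbor, and since each subsumed vertex $s$ has at most one non-neighbor inside $G[N(s)]$, the set $B$ must induce a clique, or a clique minus one edge. Here $G[N(v)]$ is (nearly) $\comp{K_4} \join K_5$, which is \emph{not} a weak K\"onig--Egerv\'ary graph: $\alpha'(G[N(v)]) = 4$, yet $G[N(v)] - M$ has no vertex cover of size $\leq 4$ for any matching $M$. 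So reducing $\{v\}$ alone is impossible and a different reducible set is needed.

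To finish this subcase I would reduce the \emph{larger} set $V_0 = \{v\} \cup S$, exploiting the density of $B$. Take $X = E(G[B]) \cup \{va \st a \in A\}$; then every triangle meeting $V_0$ contains either an edge of $G[B]$ (the triangles $vb_ib_j$ and $sb_ib_j$) or an edge $va$ (the triangles $vab_j$), so $G - X$ has no triangle meeting $V_0$, and $|X| = |E(G[B])| + 4 \leq 14$. It then remains to pack at least $|X|/2$ edge-disjoint triangles whose edges lying outside $V_0$ all belong to $E(G[B]) \subseteq X$; the near-complete bipartite graph between $S$ and $B$ together with the near-clique $G[B]$ supplies the required seven, by combining triangles of the forms $s b_i b_j$, $v b_i b_j$, and $v s b_j$ (and avoiding any missing edge). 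This exhibits $V_0$ as reducible, the final contradiction. The hard part is exactly this last subcase: the neighborhood is too dense to be weak K\"onig--Egerv\'ary, so one must abandon the reduction of $\{v\}$, reduce $\{v\} \cup S$ instead, and verify by an explicit edge-disjoint triangle packing that the dense structure pays for the cover $X$. Checking that enough edge-disjoint triangles survive --- especially when the subsumed vertices have degree $5$, so that $G[B]$ or the $A$--$B$ bipartite graph is missing an edge and the count $|\sey| = |X|/2$ is tight --- is the step requiring the most care.
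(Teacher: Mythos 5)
Your reduction of the lemma to ``$\sizeof{A}\ge 4$ implies $\sizeof{S}\le 2$'' is correct, and your two easy cases are sound: when $\sizeof{A}=5$ each subsumed vertex is complete to $B$, Hall's condition lets you saturate $B$, and the pair (matching, $B$) witnesses \iswke{G[N(v)]}; when $\sizeof{A}=4$ and some $b\in B$ has no $A$-neighbor, a matching saturating $A$ together with the cover $B-b$ does the same. You also correctly diagnose the heart of the matter: in the dense configuration $\{v\}$ alone cannot be reduced (your computation that $\comp{K_4}\join K_5\notin\wke$ checks out), so one must reduce a larger set and pay for it with $E(G[B])$ --- which is exactly the paper's move. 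But two things are genuinely wrong or missing. First, the structural claim that $B$ ``must induce a clique, or a clique minus one edge'' is false even within your subcase. A subsumed vertex $s$ only certifies that the non-edges inside $N_B(s)$ form a matching of size at most one (via $v$'s domination in $G[N(s)]$ and the $\sizeof{E(\comp{G[N(s)]})}\neq 2$ clause of Proposition~\ref{prop:comp-matching}); a vertex $b_5$ avoided by all three subsumed vertices has its $B$-edges constrained only through $w'$. For instance, all $s_i$ of degree $5$ with $N_B(s_i)=\{b_1,\dots,b_4\}$, $w'$ of degree $5$ with $N_B(w')=\{b_5,b_1,b_2\}$, and $b_5b_3,b_5b_4\notin E(G)$ is consistent with every hypothesis of your subcase, yet $B$ misses two edges. (Worse for your case boundary: in that very configuration $B-b_5$ covers $G[N(v)]-M$ for a suitable matching $M$ of size $4$, so \iswke{G[N(v)]} after all --- your trichotomy routes some still-easy configurations into the hard subcase under a false structural premise.)

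Second, and decisively: the packing step you defer is the entire content of the lemma, and it is left unexecuted. You must exhibit, in every residual configuration --- degree-$5$ subsumed vertices avoiding a common $b$, a missing edge inside their common neighborhood, odd $\sizeof{X}$, and the exceptional vertex $p\in N(w')-N[v]$ --- at least $\ceil{\sizeof{X}/2}$ edge-disjoint triangles of your three shapes, and the tight cases hinge on exactly the constraints from Proposition~\ref{prop:comp-matching}. The paper avoids your case analysis entirely: it reduces $V_0=\{v,w_1,w_2,w_3,w'\}$, i.e., it puts the fourth $6^-$-neighbor $w'$ \emph{into} $V_0$, takes $X=E(G[Z])\cup\{vw_1,vw_2,vw_3,vw'\}$ with $Z=N(v)-V_0$ of size $5$ (adding $w'p$ and $r_2p$ when $p$ exists), and greedily builds seven or eight edge-disjoint triangles: two per subsumed $w_i$ inside $H_i=G[N(w_i)\cap N(v)]$, justified by $\Delta(\comp{H_i})\le 1$ and $\sizeof{E(\comp{H_3})}\neq 2$ (whence $\sizeof{E(H_3)}\ge 5$), plus the triangles $vw'r_1$ and, if needed, $w'r_2p$. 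Those last triangles through $w'$ and $p$ are what balance the budget $\sizeof{X}\le 2\sizeof{\sey}$ uniformly; your variant $V_0=\{v\}\cup S$ forgoes that resource, so your counting is strictly tighter than the paper's and cannot be waved through --- it needs the same kind of explicit, case-checked construction that you have only sketched.
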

\begin{proof}
  Let $v$ be a $9$-vertex subsuming $6^-$-vertices $w_1, w_2, w_3$.
  Suppose to the contrary that $v$ has another $6^-$-neighbor $w'$
  (possibly subsuming $w'$, possibly not).  Let $W =
  \{w_1,w_2,w_3,w'\}$, and let $V_0 = W \cup \{v\}$.  We show that
  $V_0$ is reducible, contradicting the hypothesis. By
  Proposition~\ref{prop:comp-matching}, we have
  $\Delta(\comp{G[N(w')]}) \leq 1$, since $G$ has no reducible set.
  Since $v \in N(w')$, this implies $\sizeof{N(w') - N[v]} \leq 1$. By
  the definition of subsumption, $N(w_i) \subset N[v]$ for each $i$.

  For convenience, let $H_i = G[N(w_i) \cap N(v)] = G[N(w_i) -
  \{v\}]$, and let $H' = G[N(w') \cap N(v)]$. By
  Proposition~\ref{prop:red-pair-6}, the $6^-$-vertices of $G$ form an
  independent set, so $V(H_i) \cap W = \emptyset$ for each $i$. We
  build a set $\sey$ of edge-disjoint triangles in several steps; the
  observation that $V(H_i) \cap W = \emptyset$ helps guarantee that
  $\sey$ is edge-disjoint.  The algorithm begins with $S = \emptyset$.

  (Figure~\ref{fig:sey9conq} illustrates the construction in the
  ``worst-case'' scenario where $V(H_1) = V(H_2) = V(H_3)$, where each
  $H_i \iso K_4^-$, and where $V(H')$ is a proper subset of $V(H_1)$.
  In general, it is possible that the subgraphs $H_i$ may have
  distinct vertex sets, but when they coincide we have less room to
  find edge-disjoint triangles. In the figure, dashed edges represent
  edges that are no longer available for use in $\sey$, since they
  were used in earlier triangles.)
  \begin{itemize}
  \item Since $\sizeof{V(H_1)} \geq 4$ and $\Delta(\comp{H_1}) \leq
    1$, we can find two disjoint edges $s_1s_2$ and $s_3s_4$ in
    $E(H_1)$.  Add the triangles $w_1s_1s_2$ and $w_1s_3s_4$ to
    $\sey$.
  \item Since $E(\overline{H_2}) \cup \{s_1s_2, s_3s_4\}$ is the union
    of two matchings and $\sizeof{V(H_2)} \geq 4$, we see that $H_2 -
    \{s_1s_2,s_3s_4\}$ is a graph on at least $3$ edges that is
    neither a star nor a triangle. Hence there are two disjoint edges
    $t_1t_2$ and $t_3t_4$ in $E(H_2) - \{s_1s_2, s_3s_4\}$. Add the
    triangles $w_2t_1t_2$ and $w_2t_3t_4$ to $\sey$.
  \item Since $\Delta(\comp{H_3}) \leq 1$ and $\sizeof{E(\comp{H_3})}
    \neq 2$, and since $\sizeof{V(H_3)} \geq 4$, we have
    $\sizeof{E(H_3)} \geq 5$.  Thus $H_3 - \{s_1s_2,
    s_3s_4, t_1t_2, t_3t_4\}$ still contains an edge $uu'$ and a
    vertex $r \notin \{u,u'\}$. Add the triangles $w_3uu'$ and
    $vw_3r$ to $\sey$.
  \item Since $\Delta(\comp{G[N(w')]}) \leq 1$, we have
    $\sizeof{V(H')} \geq 3$. Fix any vertex $r_1 \in V(H') - \{r\}$
    and add the triangle $vw'r_1$ to $\sey$, reaching seven
    triangles. Also, if $N(w') - N[w] \neq \emptyset$, let $p$ be the
    unique vertex in the difference. Note that $V(H') \subset N(p)
    \cap N(v)$, since otherwise $p$ would have two non-neighbors in
    $G[N(w')]$, contradicting $\Delta(\comp{G[N(w')]}) \leq 1$. Choose
    $r_2 \in V(H') - \{r, r_1\}$ and add the triangle $w'r_2p$ to
    $\sey$, reaching a total of eight triangles.
  \end{itemize}
  \begin{figure}\newcommand{\alv}{v\vphantom{w_3}}
    \centering
    \begin{tikzpicture}
      \begin{scope}[xshift=-5cm]
        \draw (0cm,0cm) circle (1.1cm);
        \begin{ontop}
          \avoint{\alv} (v) at (-1cm,1.4cm) {};
          \avoint{w_1} (w) at (0cm, 1.4cm) {};
        \end{ontop}
        \draw (v) -- (w);
        \draw (v) -- (130: 1.1cm);
        \draw (v) -- (135: 1.1cm);
        \draw (v) -- (140: 1.1cm);
        \draw (v) -- (145: 1.1cm);
        \rpoint{s_3} (z1) at (45:.6cm) {};
        \lpoint{s_1} (z2) at (135:.6cm) {};
        \lpoint{s_2} (z3) at (225:.6cm) {};
        \rpoint{s_4} (z4) at (315:.6cm) {};
        \draw (z1) -- (z2) -- (z3) -- (z4) -- (z1);
        \draw (z2) -- (z4);
        \begin{tris}
          \draw[triangle1] (z2.center) -- (z3.center) -- (w.center) -- cycle;
          \draw[triangle1] (z1.center) -- (z4.center) -- (w.center) -- cycle;
        \end{tris}
        \node at (270:1.4cm) {$H_1$};
      \end{scope}
      \begin{scope}[xshift=-2cm]
        \draw (0cm,0cm) circle (1.1cm);
        \begin{ontop}
          \avoint{w_2} (w) at (0cm, 1.4cm) {};
          \avoint{\alv} (v) at (-1cm,1.4cm) {};
        \end{ontop}
        \draw (v) -- (w);
        \draw (v) -- (130: 1.1cm);
        \draw (v) -- (135: 1.1cm);
        \draw (v) -- (140: 1.1cm);
        \draw (v) -- (145: 1.1cm);
        \rpoint{t_2} (z1) at (45:.6cm) {};
        \lpoint{t_1} (z2) at (135:.6cm) {};
        \lpoint{t_3} (z3) at (225:.6cm) {};
        \rpoint{t_4} (z4) at (315:.6cm) {};
        \draw (z1) -- (z2);
        \draw (z3) -- (z4);
        \draw[dashed] (z1) -- (z4);
        \draw[dashed] (z2) -- (z3);
        \draw (z2) -- (z4);
        \begin{tris}
          \draw[triangle1] (z1.center) -- (z2.center) -- (w.center) -- cycle;
          \draw[triangle1] (z3.center) -- (z4.center) -- (w.center) -- cycle;
        \end{tris}
        \node at (270:1.4cm) {$H_2$};
      \end{scope}
      \begin{scope}[xshift=1cm]
        \draw (0cm,0cm) circle (1.1cm);
        \begin{ontop}
        \avoint{w_3} (w3) at (0cm, 1.4cm) {};
        \avoint{\alv} (v) at (-1cm, 1.4cm) {};          
        \end{ontop}
        \draw (v) -- (w3);
        \apoint{} (z1) at (45:.6cm) {};
        \apoint{u} (z2) at (135:.6cm) {};
        \lpoint{r} (z3) at (225:.6cm) {};
        \rpoint{u'} (z4) at (315:.6cm) {};
        \draw[dashed] (z1) -- (z2);
        \draw[dashed] (z3) -- (z4);
        \draw[dashed] (z1) -- (z4);
        \draw[dashed] (z2) -- (z3);
        \draw (z2) -- (z4);
        \draw (w3.center) -- (z1.center);
        \begin{tris}
          \draw[triangle1] (z2.center) -- (z4.center) -- (w3.center) -- cycle;
          \draw[triangle1] (z3.center) -- (v.center) -- (w3.center) -- cycle;
        \end{tris}
        \draw (z2.center) -- (w3.center);
        \node at (270:1.4cm) {$H_3$};
      \end{scope}
      \begin{scope}[xshift=4cm]
        \draw (0cm,0cm) circle (1.1cm);
        \begin{ontop}
        \avoint{v\vphantom{w_3}} (v) at (-1cm, 1.4cm) {};
        \avoint{w'\vphantom{w_3}} (wp) at (0cm, 1.4cm) {};
        \apoint{p\vphantom{w_3}} (p) at (1cm, 1.4cm) {};          
        \end{ontop}
        \draw (wp) -- (p);
        \draw (v) -- (wp);
        \rpoint{r_2} (z1) at (45:.6cm) {};
        \lpoint{r_1} (z3) at (135:.6cm) {};
        \lpoint{r} (z4) at (225:.6cm) {};
        \draw[dashed] (z1) -- (z3);
        \draw[dashed] (z3) -- (z4);
        \begin{tris}
          \draw[triangle1] (v.center) -- (z3.center) -- (wp.center) -- cycle;
          \draw[triangle1] (wp.center) -- (z1.center) -- (p.center) -- cycle;
        \end{tris}
        \node at (270:1.4cm) {$H'$};
      \end{scope}
      \node at (-3.5cm, 0cm) {$\Rightarrow$};
      \node at (-.5cm, 0cm) {$\Rightarrow$};
      \node at (2.5cm, 0cm) {$\Rightarrow$};
    \end{tikzpicture}
    \caption{Constructing $\sey$ in Lemma~\ref{lem:9conq}.}
    \label{fig:sey9conq}
  \end{figure}
  Figure~\ref{fig:sey9conq} illustrates why the triangles in $\sey$
  are edge-disjoint. At each step, we add an edge-disjoint set of
  triangles, so it suffices to check that the triangles added in each
  step are disjoint from the earlier triangles. Since $V(H_i) \cap W =
  \emptyset$, edges incident to $w_i$ are used only in the step
  corresponding to $w_i$; similarly, edges incident to $w'$ are used
  only in the last step. By construction, we never use any edge in
  $E(H_i)$ that was previously used, so only the edges incident to $v$
  and incident to neither $w_3$ nor $w'$ are liable to be reused. The
  only such edges are $vr$, $vr_1$, and possibly $r_2p$.  Since $r$,
  $r_1$, and $r_2$ were chosen to be distinct vertices, and since $p
  \notin N[v]$ while all other vertices used in $\sey$ lie in $N[v]$,
  these edges are also distinct.

  Let $Z = N(v) - V_0$, so that $\sizeof{Z} = 5$. Define $X$ by
  \[ X =
  \begin{cases}
    E(G[Z]) \cup \{vw_1, vw_2, vw_3, vw'\}, & \text{if $N(w') - N(v) = \emptyset$;} \\
    E(G[Z]) \cup \{vw_1, vw_2, vw_3, vw', w'p, r_2p\}, & \text{if $N(w') - N(v) = \{p\}$.}
  \end{cases} \] Since $\sizeof{E(G[Z])} \leq 10$, we have $\sizeof{X}
  \leq 2\sizeof{\sey}$ in either case. By construction, $X$ contains
  every $\sey$-edge that is not incident to $V_0$. We check that $G-X$
  has no triangle containing a vertex of $V_0$. Since $E(G[Z]) \subset
  X$, any triangle in $G-X$ containing a vertex of $V_0$ must contain
  two vertices in $V_0 \cup (N(w') - N(v))$. Since $W$ is an
  independent set, the only way for a triangle to contain two such
  vertices is to contain an edge of the form $vw_i$, $vw'$, or $w'p$
  if $p$ exists. All such edges also lie in $X$; thus $V_0$ is
  reducible using $\sey$ and $X$.
\end{proof}
Finally, we prove three lemmas regarding the behavior of $7$- and
$8$-vertices.  The proofs are straightforward but require some case
analysis.
\begin{lemma}\label{lem:few-8-nbors}
  \robG. Let $uv \in E(G)$ with $d(u) \in \{7,8\}$ and $d(v) = 5$. If
  neither $\{u\}$ nor $\{v\}$ is reducible in $G$ and $u$ subsumes
  $v$, then $\{u,v\}$ is reducible in $G$.
\end{lemma}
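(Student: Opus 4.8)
The plan is to read the structure of $G[N(v)]$ off the non-reducibility of $\{v\}$, use the non-reducibility of $\{u\}$ to discard the awkward configurations, and then exhibit $\sey$ and $X$ witnessing reducibility of $\{u,v\}$. First, since $d(v)=5\le 6$ and $\{v\}$ is not reducible, Proposition~\ref{prop:comp-matching} forces $\Delta(\comp{G[N(v)]})\le 1$ with $\sizeof{E(\comp{G[N(v)]})}\ne 2$, so $\comp{G[N(v)]}$ has at most one edge and $G[N(v)]$ is $K_5$ or $K_5^-$. Because $u$ subsumes $v$ it dominates $G[N(v)]$, so the missing edge, if any, lies inside $A:=N(v)\setminus\{u\}=\{a,b,c,d\}$; hence $\{u,v\}\cup A$ induces $K_6$ or $K_6$ minus one edge of $A$, with $u$ and $v$ both adjacent to all of $A$. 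The remaining neighbors $E_u:=N(u)\setminus N[v]$ satisfy $\sizeof{E_u}=d(u)-5\in\{2,3\}$, and since $d(u)<10$ robustness makes $G[N(u)]$ connected, so each $e\in E_u$ has a neighbor in $A\cup E_u$ (it is not adjacent to $v$).

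The key to controlling $E_u$ is that $\{u\}$ non-reducible means \iswke{G[N(u)]} fails, by Lemma~\ref{lem:wke}. I would split on how the vertices of $E_u$ attach to the core. When they attach richly --- say they have a system of distinct representatives in $A$ --- one can extend those attachment edges to a large matching of $G[N(u)]$ for which $A$ (possibly together with a bounded correction for edges inside $E_u$) is a vertex cover of the appropriate size, witnessing \iswke{G[N(u)]} and contradicting the hypothesis. Thus only the sparse attachment patterns survive (for instance the vertices of $E_u$ sharing a common neighbor in $A$, or lying on single pendant edges), and these are exactly the patterns a direct construction can handle.

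For each surviving configuration I would build $\sey$ and $X$ with $V_0=\{u,v\}$. Putting every clique edge of $E(G[A])$ and the edge $uv$ into $X$ already destroys all triangles through $u$ or $v$ inside the core, so the work is to recover budget and to absorb $E_u$. Budget comes from packing edge-disjoint triangles in the near-$K_6$: the efficient packings use one $A$-internal triangle such as $abc$ (absorbing three clique edges at once) alongside triangles through $v$, through $u$, and through the edge $uv$. Each $e\in E_u$ is then handled according to its attachment, either by adding a triangle $uex$ and paying the one internal edge $ex\in X$, or by deleting $ue$ to kill all triangles at $u$ through $e$ at once. Conditions~(ii) and~(iii) of Definition~\ref{def:reducible} are routine: $E(G[A])\cup\{uv\}$ and the chosen $E_u$-edges meet every triangle at $u$ or $v$, while every $\sey$-edge avoiding $\{u,v\}$ is a clique edge or a chosen $ex$-edge.

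I expect the real difficulty to be the budget inequality $\sizeof{X}\le 2\sizeof{\sey}$ in the densest case, $d(u)=8$ with $G[N(v)]=K_5$ (so $A$ induces $K_4$): here $X$ must already contain all six edges of $E(G[A])$ together with $uv$, while the core $K_6$ by itself admits only four edge-disjoint triangles, giving budget $8$. The three vertices of $E_u$ are therefore indispensable for manufacturing the extra triangles, and balancing each added cover edge against an added triangle --- case by case on the (now sparse) attachment of $E_u$ --- is where the bookkeeping concentrates. The looser cases, $A$ inducing $K_4^-$ or $d(u)=7$, follow the same template with slack to spare.
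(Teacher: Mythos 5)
Your structural setup coincides with the paper's (take $W = N(u)\cap N(v)$, so $G[W]\iso K_4$ or $K_4^-$, and $\sizeof{Z}=d(u)-5\in\{2,3\}$ for $Z = N(u)-N[v]$, with a mix of weak-K\"onig--Egerv\'ary contradictions and explicit $(\sey,X)$ constructions), but the dichotomy you hang the argument on is false, and this is a genuine gap. Rich attachment of $Z$ to $W$ does \emph{not} force \iswke{G[N(u)]} once $G[Z]$ contains edges. Concretely, take $d(u)=8$, $G[W]\iso K_4$, and $Z$ inducing a triangle fully joined to $W$: then $G[N(u)]$ is a $K_7$ on $W\cup Z$ together with $v$ joined to the four vertices of $W$. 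Any matching $M$ in this $8$-vertex graph has at most three edges inside the $K_7$ (seven vertices), and $K_7$ minus at most three disjoint edges has independence number $2$, hence cover number $5$; so any $Q$ covering $G[N(u)]-M$ satisfies $\sizeof{Q}\geq 5 > 4 \geq \sizeof{M}$, and $G[N(u)]\notin\wke$. So your WKE filter does not discard the dense attachments, and the claim that ``only the sparse attachment patterns survive'' fails precisely in the case you yourself flag as hardest.

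This matters because your direct construction is designed only for sparse attachment. Adding a triangle $uzx$ while paying only the edge $zx$ into $X$ leaves the triangles $uzx'$ alive for every other neighbor $x'\in W$ of $z$ (and $uzz'$ for $z'\in Z$), violating Condition~(ii) of Definition~\ref{def:reducible}; the alternative of deleting $uz$ for all three $z\in Z$ costs three $X$-edges with no compensating triangles, giving $\sizeof{X}\geq 6+1+3=10$ against at most four edge-disjoint triangles in the core $K_6$, i.e.\ $2\sizeof{\sey}=8$. The paper closes exactly this hole by splitting on whether $G[Z]$ has an edge rather than on attachment richness: when $z_1z_2\in E(G[Z])$ it abandons WKE entirely, puts \emph{all} edges $uz$ ($z\in Z$, refined via $Z^*=\{z\in Z : d_W(z)>0\}$) into $X$, and recoups the cost with the two additional triangles $uz_1z_2$ and $uz_0w$, reaching $\sizeof{X}=12=2\sizeof{\sey}$ in the tight case $G[W]\iso K_4$, $\sizeof{Z^*}=3$; when $Z$ is independent it analyzes $\alpha'$ of the bipartite graph $J$ between $W$ and $Z$, obtaining WKE witnesses for $\alpha'(J)\in\{1,3\}$ and for $\alpha'(J)=2$ with $\sizeof{N_W(Z)}=2$, plus one further direct construction when $\alpha'(J)=2$ and $\sizeof{N_W(Z)}\geq 3$. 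The bookkeeping you defer is therefore not routine residue of a sound plan but the actual content of the lemma, and the plan as stated does not produce it.
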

\begin{proof}
  Let $W = N(u) \cap N(v)$ and $Z = N(u) - N[v]$.  By
  Proposition~\ref{prop:comp-matching}, $G[N(v)] \in \{K_5,
  K_5^-\}$. Hence $G[W] \in \{K_4, K_4^-\}$. Also, $\sizeof{Z}
  = d(u) - 5$, since $u$ subsumes $v$. Hence $\sizeof{Z} \in \{2,3\}$.

  If $G[W] \iso K_4^-$, then let $w_1w_2$ be the missing edge in $W$;
  otherwise, let $w_1$ and $w_2$ be distinct vertices of $W$.

  \caze{1}{$G[Z]$ contains an edge $z_1z_2$.} Let $Z^* = \{z \in Z \st d_W(z) >
  0\}$.  Observe that $\sizeof{Z^*} + \sizeof{E(G[W])} \leq 9$, with
  equality holding if and only if $\sizeof{Z^*} = 3$ and
  $\sizeof{E(G[W])} = 6$.
   
  \caze{1a}{$G[W] \iso K_4$ and $\sizeof{Z^*} = 3$.} Let $z_0$ be the
  vertex of $Z^*$ not in $\{z_1,z_2\}$. Choose $w \in N(z_0) \cap W$,
  relabeling if necessary so that $w, w_1, w_2$ are distinct, and let
  $w'$ be the unique vertex in $W - \{w, w_1,w_2\}$. \tillSX{\{u,v\}}{Figure~\ref{fig:zw-triangles}(a)}
  \begin{align*}
    \sey &= \{uw'w_1, vww_1, uvw_2, ww'w_2 \} \cup \{uz_1z_2, uz_0w\}, \\
    X &= E(G[W]) \cup \{uz : z \in Z\} \cup \{uv, z_1z_2, z_0w \}.
  \end{align*}
  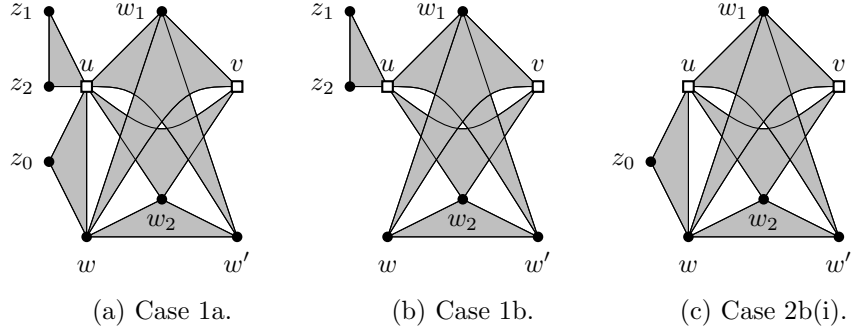
\begin{figure}
    \centering
    \begin{tikzpicture}
      \begin{scope}[xshift=-4cm]
        \begin{ontop}
          \avoint{u} (u) at (-1cm, 0cm) {};
          \avoint{v} (v) at (1cm, 0cm) {};
          \bpoint{w\vphantom{w'}} (w) at (-1cm, -2cm) {};
          \bpoint{w'} (wp) at (1cm, -2cm) {};
          \bpoint{w_2} (w2) at (0cm, -1.5cm) {};
          \lpoint{w_1} (w1) at (0cm, 1cm) {};
          \lpoint{z_0} (z) at (-1.5cm, -1cm) {};
          \lpoint{z_1} (z1) at (-1.5cm, 1cm) {};
          \lpoint{z_2} (z2) at (-1.5cm, 0cm) {};
        \end{ontop}
        \begin{tris}
          \draw[triangle1] (w.center) -- (wp.center) -- (w2.center) -- cycle;
          \draw[triangle1] (u.center) ..controls (0cm, -.75cm) .. (v.center) -- (w2.center) -- cycle;
          \draw[triangle1] (wp.center) ..controls (-.25cm, .0cm) .. (u.center) -- (w1.center) -- cycle;
          \draw[triangle1] (w.center) .. controls (.25cm, .0cm) .. (v.center) -- (w1.center) -- cycle;
          \draw[triangle1] (z1.center) -- (z2.center) -- (u.center) -- cycle;
          \draw[triangle1] (w.center) -- (z.center) -- (u.center) -- cycle;
        \end{tris}
        \draw (w.center) -- (wp.center) -- (w2.center) -- cycle;
        \draw (u.center) ..controls (0cm, -.75cm) .. (v.center) -- (w2.center) -- cycle;
        \draw (wp.center) ..controls (-.25cm, .0cm) .. (u.center) -- (w1.center) -- cycle;
        \draw (w.center) .. controls (.25cm, .0cm) .. (v.center) -- (w1.center) -- cycle;
        \draw (z1.center) -- (z2.center) -- (u.center) -- cycle;
        \draw (w.center) -- (z.center) -- (u.center) -- cycle;
        \node at (0cm, -3cm) {(a) Case~1a.};
      \end{scope}
      \begin{scope}[xshift=0cm]
        \begin{ontop}
          \avoint{u} (u) at (-1cm, 0cm) {};
          \avoint{v} (v) at (1cm, 0cm) {};
          \bpoint{w\vphantom{w'}} (w) at (-1cm, -2cm) {};
          \bpoint{w'} (wp) at (1cm, -2cm) {};
          \bpoint{w_2} (w2) at (0cm, -1.5cm) {};
          \lpoint{w_1} (w1) at (0cm, 1cm) {};
          \lpoint{z_1} (z1) at (-1.5cm, 1cm) {};
          \lpoint{z_2} (z2) at (-1.5cm, 0cm) {};
        \end{ontop}
        \begin{tris}
          \draw[triangle1] (w.center) -- (wp.center) -- (w2.center) -- cycle;
          \draw[triangle1] (u.center) ..controls (0cm, -.75cm) .. (v.center) -- (w2.center) -- cycle;
          \draw[triangle1] (wp.center) ..controls (-.25cm, .0cm) .. (u.center) -- (w1.center) -- cycle;
          \draw[triangle1] (w.center) .. controls (.25cm, .0cm) .. (v.center) -- (w1.center) -- cycle;
          \draw[triangle1] (z1.center) -- (z2.center) -- (u.center) -- cycle;
        \end{tris}
        \draw (w.center) -- (wp.center) -- (w2.center) -- cycle;
        \draw (u.center) ..controls (0cm, -.75cm) .. (v.center) -- (w2.center) -- cycle;
        \draw (wp.center) ..controls (-.25cm, .0cm) .. (u.center) -- (w1.center) -- cycle;
        \draw (w.center) .. controls (.25cm, .0cm) .. (v.center) -- (w1.center) -- cycle;
        \draw (z1.center) -- (z2.center) -- (u.center) -- cycle;
        \node at (0cm, -3cm) {(b) Case~1b.};
      \end{scope}
      \begin{scope}[xshift=4cm]
        \begin{ontop}
          \avoint{u} (u) at (-1cm, 0cm) {};
          \avoint{v} (v) at (1cm, 0cm) {};
          \bpoint{w\vphantom{w'}} (w) at (-1cm, -2cm) {};
          \bpoint{w'} (wp) at (1cm, -2cm) {};
          \bpoint{w_2} (w2) at (0cm, -1.5cm) {};
          \lpoint{w_1} (w1) at (0cm, 1cm) {};
          \lpoint{z_0} (z) at (-1.5cm, -1cm) {};
        \end{ontop}
        \begin{invis}
          \node[fill=white] at (2cm, 0cm) {};
        \end{invis}
        \begin{tris}
          \draw[triangle1] (w.center) -- (wp.center) -- (w2.center) -- cycle;
          \draw[triangle1] (u.center) ..controls (0cm, -.75cm) .. (v.center) -- (w2.center) -- cycle;
          \draw[triangle1] (wp.center) ..controls (-.25cm, .0cm) .. (u.center) -- (w1.center) -- cycle;
          \draw[triangle1] (w.center) .. controls (.25cm, .0cm) .. (v.center) -- (w1.center) -- cycle;
          \draw[triangle1] (w.center) -- (z.center) -- (u.center) -- cycle;
        \end{tris}
        \draw (w.center) -- (wp.center) -- (w2.center) -- cycle;
        \draw (u.center) ..controls (0cm, -.75cm) .. (v.center) -- (w2.center) -- cycle;
        \draw (wp.center) ..controls (-.25cm, .0cm) .. (u.center) -- (w1.center) -- cycle;
        \draw (w.center) .. controls (.25cm, .0cm) .. (v.center) -- (w1.center) -- cycle;
        \draw (w.center) -- (z.center) -- (u.center) -- cycle;
        \node at (0cm, -3cm) {(c) Case~2b(i).};
      \end{scope}
    \end{tikzpicture}
    \caption{Triangles in Lemma~\ref{lem:few-8-nbors}.}
    \label{fig:zw-triangles}
  \end{figure}%
  We check Condition~(ii) of Definition~\ref{def:reducible}. Let $T$
  be a triangle in $G-X$ containing a vertex of $\{u,v\}$. Since $E(G[W])
  \subset X$, we see that $T$ contains at most one vertex from $W$;
  hence, two vertices of $T$ must lie in $Z \cup \{u,v\}$.  If
  $v \in T$, then $T$ cannot contain any vertex of $Z$, so $\{u,v\}
  \subset T$, which is impossible since $uv \in X$. Therefore $v
  \notin T$, so $T$ contains $u$ and at least one vertex $z \in
  Z$. Since $uz \in X$, no such triangle exists.

  \caze{1b}{$\sizeof{Z^*} + \sizeof{E(G[W])} \leq 8$.}  Let $w$ and $w'$
  be the vertices of $W - \{w_1, w_2\}$. If $\sizeof{Z^*} < 2$, then
  enlarge $Z^*$ to size $2$ by adding arbitrary elements of
  $Z$. \tillSX{\{u,v\}}{Figure~\ref{fig:zw-triangles}(b)}
  \begin{align*}
    \sey &= \{uw'w_1, vww_1, uvw_2, ww'w_2 \} \cup \{uz_1z_2\}, \\
    X &= E(G[W]) \cup \{z_1z_2, uv\} \cup \{uz \st z \in Z^*\}.
  \end{align*}
  We again check Condition~(ii) of Defintion~\ref{def:reducible}.  Let
  $T$ be a triangle in $G-X$ containing a vertex of $\{u,v\}$.  As before,
  $T$ contains at most one vertex of $W$, and so $v \notin T$. Thus $u
  \in T$, so $T$ either contains two vertices of $Z$ or a vertex of
  $Z$ and a vertex of $W$. Since $\sizeof{Z^*} \geq 2$, if $T$
  contains two vertices of $Z$ then $T$ contains some vertex of $Z^*$,
  which is impossible since $uz \in X$ for all $z \in Z^*$.  On the
  other hand, if $T$ contains some $z \in Z$ and $w \in W$,
  then $d_W(z) > 0$, which implies $z \in Z^*$, again implying $uz \in X$.

  \caze{2}{$Z$ is independent.} Since $G$ is robust and $d(u) < 10$,
  $G[N(u)]$ is connected; thus no vertex of $Z$ can be isolated in
  $G[N(u)]$, so each vertex of $Z$ has a neighbor in $W$.  Let $J$ be
  the bipartite subgraph of $G$ whose partite sets are $W$ and $Z$.
  Since each vertex of $Z$ has a neighbor in $W$, we have $\alpha'(J)
  > 0$.
  \begin{figure}
    \centering
    \begin{tikzpicture}
      \begin{scope}[xshift=-4cm]
        \begin{ontop}
          \avoint{u} (u) at (0cm, 1.5cm) {};
          \lpoint{v} (v) at (-1.75cm, -1cm) {};
        \end{ontop}
        \begin{scope}[xscale=.33, xshift=-2cm, yshift=-1cm]          
          \lpoint{w} (w) at (0cm, 1cm) {};
          \apoint{t_1} (w1) at (-1cm, -1cm) {};
          \apoint{t_2} (w2) at (1cm, -1cm) {};
          \apoint{w'} (wp) at (0cm, 0cm) {};
          \draw (0cm,0cm) circle (1.75cm) {};
          \node[shape=coordinate] (c1) at (90:1.75cm) {};
          \node[shape=coordinate] (c2) at (80:1.75cm) {};
          \node[shape=coordinate] (c3) at (70:1.75cm) {};
          \node[shape=coordinate] (c4) at (60:1.75cm) {};
          \node[shape=coordinate] (e1) at (190:1.75cm) {};
          \node[shape=coordinate] (e2) at (170:1.75cm) {};
          \node[shape=coordinate] (e3) at (200:1.75cm) {};
          \node at (0cm, -2cm) {$W$};
        \end{scope}
        \begin{scope}[xscale=.33, xshift=2cm, yshift=-1cm]          
          \bpoint{z} (z) at (0cm, 1cm) {};
          \bpoint{} (z1) at (0cm, 0cm) {};
          \bpoint{} (z2) at (0cm, -1cm) {};
          \draw (0cm,0cm) circle (1.75cm) {};
          \node[shape=coordinate] (d1) at (90:1.75cm) {};
          \node[shape=coordinate] (d2) at (100:1.75cm) {};
          \node[shape=coordinate] (d3) at (110:1.75cm) {};
          \node at (0cm, -2cm) {$Z$};
        \end{scope}
        \draw (u) -- (c1);
        \draw (u) -- (c2);
        \draw (u) -- (c3);
        \draw (u) -- (c4);
        \draw (v) -- (e1);
        \draw (v) -- (e2);
        \draw (v) -- (e3);
        \draw[medge] (v) -- (wp);
        \draw[medge] (w1) -- (w2);
        \draw[medge] (w) -- (z);
        \qvert{w};
        \qvert{wp};
        \qvert{v};
        \draw (u) -- (d1);
        \draw (u) -- (d2);
        \draw (u) -- (d3); 
        \draw (u) .. controls ++(180:1cm) and ++(90:1cm) .. (v);
        \draw (w) -- (z1);
        \draw (w) -- (z2);
        \node at (0cm, -4cm) {(a) Case 2a.};
      \end{scope}
      \begin{scope}[]
        \begin{ontop}
          \avoint{u} (u) at (0cm, 1.5cm) {};
          \lpoint{v} (v) at (-1.75cm, -1cm) {};
        \end{ontop}
        \begin{scope}[xscale=.33, xshift=-2cm, yshift=-1cm]          
          \lpoint{w} (w) at (0cm, 1cm) {};
          \apoint{t_1} (w1) at (-1cm, -1cm) {};
          \apoint{t_2} (w2) at (1cm, -1cm) {};
          \apoint{w'} (wp) at (0cm, 0cm) {};
          \draw (0cm,0cm) circle (1.75cm) {};
          \node[shape=coordinate] (c1) at (90:1.75cm) {};
          \node[shape=coordinate] (c2) at (80:1.75cm) {};
          \node[shape=coordinate] (c3) at (70:1.75cm) {};
          \node[shape=coordinate] (c4) at (60:1.75cm) {};
          \node[shape=coordinate] (e1) at (190:1.75cm) {};
          \node[shape=coordinate] (e2) at (170:1.75cm) {};
          \node[shape=coordinate] (e3) at (200:1.75cm) {};
          \node[shape=coordinate] (e4) at (180:1.75cm) {};
          \node[shape=coordinate] (f) at (-10:1.75cm) {};
          \draw[dashed] (e1) -- (f);
          \node at (0cm, -2cm) {$W$};
        \end{scope}
        \begin{scope}[xscale=.33, xshift=2cm, yshift=-1cm]          
          \bpoint{} (z) at (0cm, 1cm) {};
          \bpoint{} (z1) at (0cm, 0cm) {};
          \bpoint{} (z2) at (0cm, -1cm) {};
          \draw (0cm,0cm) circle (1.75cm) {};
          \node[shape=coordinate] (d1) at (90:1.75cm) {};
          \node[shape=coordinate] (d2) at (100:1.75cm) {};
          \node[shape=coordinate] (d3) at (110:1.75cm) {};
          \node at (0cm, -2cm) {$Z$};
        \end{scope}
        \draw (u) -- (c1);
        \draw (u) -- (c2);
        \draw (u) -- (c3);
        \draw (u) -- (c4);
        \draw (v) -- (e1);
        \draw (v) -- (e2);
        \draw (v) -- (e3);
        \draw (v) -- (e4);
        \draw[medge] (w1) -- (w2);
        \draw[medge] (w) -- (z);
        \draw[medge] (wp) -- (z1);
        \qvert{w};
        \qvert{wp};
        \qvert{v};
        \draw (u) -- (d1);
        \draw (u) -- (d2);
        \draw (u) -- (d3); 
        \draw (u) .. controls ++(180:1cm) and ++(90:1cm) .. (v);
        \node at (0cm, -4cm) {(b) Case 2b(ii).};
      \end{scope}
      \begin{scope}[xshift=4cm]
        \begin{ontop}
          \avoint{u} (u) at (0cm, 1.5cm) {};
          \lpoint{v} (v) at (-1.75cm, -1cm) {};
        \end{ontop}
        \begin{scope}[xscale=.33, xshift=-2cm, yshift=-1cm]          
          \lpoint{} (w) at (0cm, 1cm) {};
          \apoint{w} (w1) at (-.8cm, -1cm) {};
          \apoint{} (w2) at (.8cm, -1cm) {};
          \apoint{} (wp) at (0cm, 0cm) {};
          \draw (0cm,0cm) circle (1.75cm) {};
          \node[shape=coordinate] (c1) at (90:1.75cm) {};
          \node[shape=coordinate] (c2) at (80:1.75cm) {};
          \node[shape=coordinate] (c3) at (70:1.75cm) {};
          \node[shape=coordinate] (c4) at (60:1.75cm) {};
          \node[shape=coordinate] (e1) at (190:1.75cm) {};
          \node[shape=coordinate] (e2) at (170:1.75cm) {};
          \node[shape=coordinate] (e3) at (200:1.75cm) {};
          \node[shape=coordinate] (e4) at (180:1.75cm) {};
          \node at (0cm, -2cm) {$W$};
        \end{scope}
        \begin{scope}[xscale=.33, xshift=2cm, yshift=-1cm]          
          \bpoint{} (z) at (0cm, 1cm) {};
          \bpoint{} (z1) at (0cm, 0cm) {};
          \bpoint{} (z2) at (0cm, -1cm) {};
          \draw (0cm,0cm) circle (1.75cm) {};
          \node[shape=coordinate] (d1) at (90:1.75cm) {};
          \node[shape=coordinate] (d2) at (100:1.75cm) {};
          \node[shape=coordinate] (d3) at (110:1.75cm) {};
          \node at (0cm, -2cm) {$Z$};
        \end{scope}
        \draw (u) -- (c1);
        \draw (u) -- (c2);
        \draw (u) -- (c3);
        \draw (u) -- (c4);
        \draw (v) -- (e1);
        \draw (v) -- (e2);
        \draw (v) -- (e4);
        \draw[medge] (w) -- (z);
        \draw[medge] (wp) -- (z1);
        \draw[medge] (w2) -- (z2);
        \draw[medge] (v) .. controls ++(270:.5cm) and ++(180:.5cm) .. (w1);
        \qvert{w};
        \qvert{wp};
        \qvert{w1};
        \qvert{w2};
        \draw (u) -- (d1);
        \draw (u) -- (d2);
        \draw (u) -- (d3);
        \draw (u) .. controls ++(180:1cm) and ++(90:1cm) .. (v);
        \node at (0cm, -4cm) {(c) Case 2c.};
      \end{scope}
    \end{tikzpicture}
    \caption{Matchings and vertex covers in Case~2.}
    \label{fig:few-8-wke}
  \end{figure}
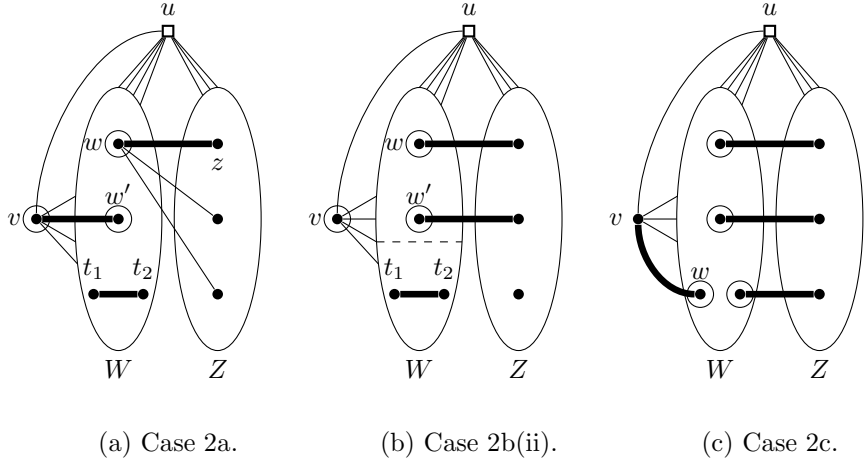
  
  \caze{2a}{$\alpha'(J) = 1$.} Since every vertex of $Z$ has a
  neighbor in $W$, $\alpha'(J) = 1$ implies that some vertex $w \in W$
  covers every edge incident to $Z$. Let $z$ be any vertex of $Z$, let
  $t_1t_2$ be an edge in $G[W]$ not containing $w$, and let $w'$ be
  the remaining vertex of $W$. Let $M = \{wz, t_1t_2, vw'\}$ and let
  $Q = \{v, w, w'\}$, as illustrated in
  Figure~\ref{fig:few-8-wke}(a). All edges incident to $Z$ are covered by
  $w$, and the only edge of $G[W]$ not covered by $\{w,w'\}$ is
  $t_1t_2$, so $Q$ is a vertex cover in $G[N(u)] - M$.  Hence
  \iswke{G[N(u)]}, contradicting the hypothesis that $\{u\}$ is not
  reducible.

  \caze{2b}{$\alpha'(J) = 2$.}  Since $\alpha'(J) = 2$,
  $\sizeof{N_W(Z)} \geq 2$.

  \caze{2b(i)}{$\sizeof{N_W(Z)} \geq 3$.}  Let $w \in N_W(Z) -
  \{w_1,w_2\}$, and pick $z_0 \in Z$ such that $wz_0 \in E(G)$. Let $w'$
  be the unique vertex in $W - \{w,w_1,w_2\}$, and let $\{q_1, q_2\}$
  be a vertex cover in
  $J$. \tillSX{\{u,v\}}{Figure~\ref{fig:zw-triangles}(c)}
  \begin{align*}
    \sey &= \{uw'w_1, vww_1, uvw_2, ww'w_2 \} \cup \{uz_0w\}, \\
    X &= E(G[W]) \cup \{uv, z_0w\} \cup \{uq_1, uq_2\}.
  \end{align*}
  The verification of Condition~(ii) is similar to Case~1b, with the
  following modifications: any bad triangle $T$ cannot contain two
  vertices of $Z$, since $G[Z]$ has no edges; and if $T$ contains a
  vertex in $Z$ and a vertex in $W$, then $T$ contains $u$ along with
  an edge in $J$, one endpoint of which lies in $\{q_1, q_2\}$. The
  other possibilities for $T$ are identical to Case~1b.

  \caze{2b(ii)}{$\sizeof{N_W(Z)} = 2$.} Let $t_1, t_2$ be the two
  vertices of $W - N_W(Z)$, and let $M$ be a maximum matching in $J$,
  as illustrated in Figure~\ref{fig:few-8-wke}(b) in the case where
  $t_1t_2 \in E(G)$. Clearly, $M$ does not cover $t_1$ or
  $t_2$. Observe that $N_W(Z) \cup \{v\}$ covers every edge in
  $G[N(u)]$, except possibly the edge $t_1t_2$ if it exists. If
  $t_1t_2 \in E(G)$, then let $M' = M \cup \{t_1t_2\}$; otherwise, let
  $M' = M \cup \{vt_1\}$. In either case, $N_W(Z) \cup \{v\}$ is a
  vertex cover of size $3$ in $G[N(u)] - M'$, so \iswke{G[N(u)]},
  contradicting the hypothesis that $\{u\}$ is not reducible.

  \caze{2c}{$\alpha'(J) = 3$.} Let $M$ be a maximum matching in $J$,
  and let $w$ be the vertex of $M$ not covered by $M$; then $M \cup
  \{vw\}$ is a matching of size $4$ in $G[N(u)]$, as shown in
  Figure~\ref{fig:few-8-wke}(c). Since $W$ is a vertex cover of size
  $4$ in $G[N(u)]$, this implies \iske{G[N(u)]}, again contradicting
  the hypothesis that $\{u\}$ is not reducible.
\end{proof}
\begin{lemma}\label{lem:6-dom-7}
  \robG. Let $uv \in E(G)$ with $d(u) = 7$ and $d(v) = 6$. If $\{v\}$
  is not reducible in $G$ and $u$ subsumes $v$, then $\{u,v\}$ is
  reducible in $G$.
\end{lemma}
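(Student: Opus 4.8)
The plan is to determine the structure of $G[N[v]]$ exactly and then write down $\sey$ and $X$ directly. Since $d(v)=6$, write $N(v) = \{u,w_1,\ldots,w_5\}$ and set $W = \{w_1,\ldots,w_5\}$. Because $u$ subsumes $v$, the vertex $u$ is adjacent to every vertex of $W$, so $N(u) \supseteq \{v\} \cup W$; as $d(u)=7$, the vertex $u$ has exactly one further neighbor $z$, and $z \notin N[v]$, so in particular $vz \notin E(G)$.

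Next I would apply Proposition~\ref{prop:comp-matching}. Since $G$ is robust and $\{v\}$ is not reducible, we get $\Delta(\comp{G[N(v)]}) \le 1$ and $\comp{G[N(v)]}$ does not have exactly two edges. As $u$ dominates $G[N(v)]$, it is isolated in $\comp{G[N(v)]}$, so this complement is a matching lying entirely inside $W$ and having $0$ or $1$ edges. Hence $G[W]$ is $K_5$ or $K_5$ minus one edge, and therefore $G[N[v]] = G[\{u,v,w_1,\ldots,w_5\}]$ is $K_7$ or $K_7$ minus one edge $e$, with $e$ inside $W$.

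The heart of the argument is to use a Steiner triple system on the seven vertices of $N[v]$, that is, the decomposition of $K_7$ into seven pairwise edge-disjoint triangles. Each pair of vertices lies in exactly one triple, so at most one triple uses the missing edge $e$; discarding that triple (or any triple, when $G[N[v]] = K_7$) leaves a family $\sey$ of six pairwise edge-disjoint triangles, every member of which is a genuine triangle of $G$ since it avoids both $e$ and $z$. I would then set $X = E(G[W]) \cup \{uv, uz\}$. Condition~(i) holds because $\sizeof{E(G[W])} \le 10$ gives $\sizeof{X} \le 12 = 2\sizeof{\sey}$. Condition~(iii) holds because the only $\sey$-edges with both endpoints outside $\{u,v\}$ lie in $G[W] \subset X$. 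For Condition~(ii) I would check that every triangle of $G$ meeting $\{u,v\}$ meets $X$: one through $uv$ uses $uv$; a triangle $uw_iw_j$ or $vw_iw_j$ uses an edge of $G[W]$; a triangle $uzw_i$ uses $uz$; and $z$ creates no triangle through $v$ since $vz \notin E(G)$.

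The argument is short, and its only delicate point is that Condition~(i) is tight when $G[W] = K_5$ (there $\sizeof{X} = 12 = 2\sizeof{\sey}$). This tightness forces the construction: all six triangles must be kept, and no further edges may be added to $X$, which is exactly why $\sey$ is taken disjoint from $z$ and why the single edge $uz$ (rather than the several edges $zw_i$) is used to destroy all triangles through $z$. I expect the main place to be careful is the exhaustive verification of Condition~(ii) --- confirming that the triangle types listed above are the only ones meeting $\{u,v\}$ --- but this is routine given the explicit description of $N(u)$ and $N(v)$.
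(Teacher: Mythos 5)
Your proposal is correct and is essentially the paper's own proof: the paper's six explicit triangles $uw_2w_5$, $uw_3w_4$, $vw_2w_3$, $vw_4w_5$, $uvw_1$, $w_1w_3w_5$ are precisely a Steiner-triple-system decomposition of $K_7$ on $N[v]$ with the triple $w_1w_2w_4$ through the possible missing edge $w_1w_2$ discarded, and your $X = E(G[W]) \cup \{uv, uz\}$ coincides with the paper's $X = E(H) \cup \{uv, up\}$ (with $H = G[N(u)\cap N(v)]$ and $p = z$). The structural reduction via Proposition~\ref{prop:comp-matching} and the check of Definition~\ref{def:reducible} also proceed exactly as in the paper.
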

\begin{proof}
  By Proposition~\ref{prop:comp-matching}, $G[N(v)] \in \{K_6,
  K_6^-\}$ (since $G[N(v)]$ has $u$ as a dominating vertex,
  $\comp{G[N(v)]}$ cannot be a perfect matching). Let $H = G[N(u) \cap N(v)]$
  and write $V(H) = \{w_1, \ldots, w_5\}$, indexed so that
  $w_1w_2$ is the possible missing edge. Let $p$ be the unique vertex
  in $N(u) - N[v]$.  \tillSX{\{u,v\}}{Figure~\ref{fig:6-dom-7}}
  \begin{align*}
    \sey &= \{uw_2w_5, uw_3w_4\} \cup \{vw_2w_3, vw_4w_5\} \cup \{uvw_1, w_1w_3w_5\}, \\
    X &= E(H) \cup \{uv, up\}.
  \end{align*}
  \begin{figure}
    \begin{tikzpicture}
      \begin{scope}[xshift=-4.5cm]
        \begin{ontop}
          \bpoint{w_1} (w1) at (-90:1cm) {};
          \bpoint{w_2} (w2) at (-45:1cm) {};
          \gpoint{below right}{w_3} (w3) at (0:1cm) {};
          \gpoint{below left}{w_4} (w4) at (-180:1cm) {};
          \bpoint{w_5} (w5) at (-135:1cm) {};
          \avoint{u} (u) at (-.5cm, .5cm) {};          
        \end{ontop}
        \begin{tris}
          \draw[triangle1] (u.center) -- (w2.center) -- (w5.center) -- cycle;
          \draw[triangle1] (u.center) -- (w3.center) -- (w4.center) -- cycle;
        \end{tris}
        \draw (u.center) -- (w2.center) -- (w5.center) -- cycle;
        \draw (u.center) -- (w3.center) -- (w4.center) -- cycle;
      \end{scope}
      \begin{scope}
        \begin{ontop}
          \bpoint{w_1} (w1) at (-90:1cm) {};
          \bpoint{w_2} (w2) at (-45:1cm) {};
          \gpoint{below right}{w_3} (w3) at (0:1cm) {};
          \gpoint{below left}{w_4} (w4) at (-180:1cm) {};
          \bpoint{w_5} (w5) at (-135:1cm) {};
          \avoint{v} (v) at (.5cm, .5cm) {};          
        \end{ontop}
        \begin{tris}
          \draw[triangle1] (v.center) -- (w2.center) -- (w3.center) -- cycle;
          \draw[triangle1] (v.center) -- (w4.center) -- (w5.center) -- cycle;
        \end{tris}
        \draw (v.center) -- (w2.center) -- (w3.center) -- cycle;
        \draw (v.center) -- (w4.center) -- (w5.center) -- cycle;
      \end{scope}
      \begin{scope}[xshift=4.5cm]
        \begin{ontop}
          \bpoint{w_1} (w1) at (-90:1cm) {};
          \bpoint{w_2} (w2) at (-45:1cm) {};
          \gpoint{below right}{w_3} (w3) at (0:1cm) {};
          \gpoint{below left}{w_4} (w4) at (-180:1cm) {};
          \bpoint{w_5} (w5) at (-135:1cm) {};
          \avoint{u} (u) at (-.5cm, .5cm) {};
          \avoint{v} (v) at (.5cm, .5cm) {};           
        \end{ontop}
        \begin{tris}
          \draw[triangle1] (u.center) -- (v.center) -- (w1.center) -- cycle;
          \draw[triangle1] (w1.center) -- (w3.center) -- (w5.center) -- cycle;
        \end{tris}
        \draw (u.center) -- (v.center) -- (w1.center) -- cycle;
        \draw (w1.center) -- (w3.center) -- (w5.center) -- cycle;
      \end{scope}
      \node at (-2.25cm, -.5cm) {$\bigcup$};
      \node at (2.25cm, -.5cm) {$\bigcup$};
    \end{tikzpicture}
    \centering    
    \caption{Triangles in Lemma~\ref{lem:6-dom-7}.}
    \label{fig:6-dom-7}
  \end{figure}
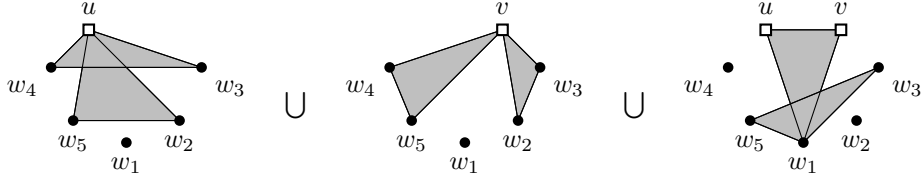%
  We check Condition~(ii) of Definition~\ref{def:reducible}. Since
  $E(H) \subset X$, any triangle of $G-X$ containing a vertex of $\{u,v\}$
  contains at most one vertex of $H$, and therefore contains two
  vertices from $\{u,v,p\}$. Since $uv, up \in X$ and $vp \notin
  E(G)$, no such triangle exists.
\end{proof}
\begin{lemma}\label{lem:6-perf-7}
  \robG. If $G$ contains an edge $uv$ such that $d(u) = 7$ and $v$ is
  a thin $6$-vertex, then $\{u,v\}$ is reducible in $G$.
\end{lemma}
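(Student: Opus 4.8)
The plan is to begin by pinning down the local structure at $v$. Since $G$ has no reducible set, Proposition~\ref{prop:comp-matching} tells us that $\Delta(\comp{G[N(v)]}) \leq 1$ and that $\comp{G[N(v)]}$ does not have exactly two edges. Combined with the hypothesis that $v$ is thin --- so $\comp{G[N(v)]}$ contains a matching of size $3$ --- this forces $\comp{G[N(v)]}$ to be a matching with at least three and at most three edges, i.e.\ a perfect matching on the six vertices of $N(v)$. Hence $G[N(v)] \iso K_{2,2,2}$, the octahedron. I would record this as the first step, since it is what turns the thinness hypothesis into a rigid structure.

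Next I would fix notation reflecting the octahedral structure. Write $N(v) = \{u, w_1, w_2, w_3, w_4, w_5\}$, indexed so that the non-edges of $G[N(v)]$ are exactly $uw_1$, $w_2w_3$, and $w_4w_5$ (the three matched pairs). Then $u$ has the unique non-neighbor $w_1$ inside $N(v)$, so $N(u) \cap N(v) = \{w_2,w_3,w_4,w_5\}$, and this set induces the $4$-cycle $w_2w_4w_3w_5$. Because $d(u)=7$ and the only neighbors of $u$ inside $N[v]$ are $v,w_2,w_3,w_4,w_5$, there are exactly two further neighbors $p,q \in N(u)-N[v]$; note that $vp,vq \notin E(G)$ since $p,q \notin N(v)$.

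With this in place I would exhibit $\sey$ and $X$ directly, namely
\begin{align*}
  \sey &= \{uw_2w_4,\ uw_3w_5,\ vw_2w_5,\ vw_3w_4\}, \\
  X &= \{uv,\ up,\ uq,\ vw_1\} \cup \{w_2w_4,\ w_2w_5,\ w_3w_4,\ w_3w_5\}.
\end{align*}
All four members of $\sey$ are genuine triangles of $G$ (each uses two edges at $u$ or $v$ together with a $C_4$-edge) and are pairwise edge-disjoint, since the four $u$- and $v$-edges and the four $C_4$-edges they use are all distinct. Conditions~(i) and~(iii) of Definition~\ref{def:reducible} are then immediate: $\sizeof{X} = 8 = 2\sizeof{\sey}$, and the only $\sey$-edges with both endpoints outside $\{u,v\}$ are the four $C_4$-edges, each of which lies in $X$. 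The substance is Condition~(ii), which I would verify by classifying triangles of $G$ meeting $\{u,v\}$. A triangle through $u$ has its other two vertices in $N(u)=\{v,w_2,w_3,w_4,w_5,p,q\}$: if it meets $\{v,p,q\}$ it contains one of $uv,up,uq \in X$, and otherwise it is $uw_iw_j$ with $w_iw_j$ a $C_4$-edge, hence in $X$. A triangle through $v$ has its other vertices in $N(v)$: one using $u$ contains $uv$, one using $w_1$ contains $vw_1$, and any remaining one is $vw_iw_j$ with $i,j \in \{2,3,4,5\}$, whose $C_4$-edge $w_iw_j$ again lies in $X$. Thus $G-X$ has no triangle meeting $\{u,v\}$, and $\{u,v\}$ is reducible.

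The main obstacle to anticipate is the tightness of the budget $\sizeof{X} \leq 2\sizeof{\sey}$ rather than any single verification. A crude choice of $X$ --- for instance, dumping all of $E(G[\{w_1,\dots,w_5\}])$ together with $uv,up,uq$ --- satisfies Condition~(ii) but has size $11$, requiring six edge-disjoint triangles; and since one checks that $u$ has odd degree in the relevant core subgraph on $\{u,v,w_1,\dots,w_5\}$, six edge-disjoint triangles cannot be packed there. The resolution, which is the real idea of the construction, is to dispose of the three ``awkward'' vertices $w_1,p,q$ cheaply, using the single edges $vw_1$, $up$, $uq$, thereby collapsing the problem onto the common-neighborhood $C_4$, on which exactly four triangles pay for exactly eight edges. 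Getting this split right is what makes the counting close with equality.
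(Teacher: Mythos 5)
Your proposal is correct and takes essentially the same approach as the paper: after pinning down that $\comp{G[N(v)]}$ is a perfect matching, you exhibit exactly the paper's construction --- the same four edge-disjoint triangles and the same eight-edge set $X$ (up to relabeling, with your $w_2,w_4,w_3,w_5$ playing the roles of the paper's $C_4$-vertices $a,b,c,d$, your $p,q$ its $p_1,p_2$, and your $w_1$ its $q$) --- and verify Condition~(ii) by the same classification of triangles through $u$ or $v$. The one small divergence is at the start: you invoke a ``$G$ has no reducible set'' hypothesis (via Proposition~\ref{prop:comp-matching}) to upgrade ``contains a matching of size $3$'' to ``is exactly a perfect matching''; that hypothesis is not literally in the lemma's statement, but it is precisely the context (Lemma~\ref{lem:redlem}) in which the lemma is applied, and the paper's own proof makes the same leap silently by treating thinness as meaning $\comp{G[N(v)]}$ \emph{is} a matching of size $3$, so your version is, if anything, the more careful one.
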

\begin{proof}
  Since $\comp{G[N(v)]}$ is a matching of size $3$, we know that
  $G[N(u) \cap N(v)] \iso C_4$; let $a,b,c,d$ be the vertices of this
  cycle, listed in order.  Let $p_1,p_2$ be the two vertices of $N(u)
  - N[v]$ and let $q$ be the unique vertex in $N(v) -
  N[u]$. \illSX{\{u,v\}}{Figure~\ref{fig:6-perf-7}}
  \begin{align*}
    \sey &= \{uab, ucd, vbc, vad\}, \\
    X &= E(G[N(u) \cap N(v)]) \cup \{uv, up_1, up_2, vq \}.
  \end{align*}
  \begin{figure}
    \centering
    \begin{tikzpicture}
      \lpoint{a} (a) at (-2cm, 0cm) {};
      \apoint{b} (b) at (0cm, 2cm) {};
      \rpoint{c} (c) at (2cm, 0cm) {};
      \bpoint{d} (d) at (0cm, -2cm) {};
      \vpoint{above}{u} (u) at (-1cm, -.5cm) {};
      \vpoint{above}{v} (v) at (1cm, -.5cm) {};
      \rpoint{p_1} (p1) at (-.25cm, .5cm) {};
      \rpoint{p_2} (p2) at (-.25cm, 1cm) {};
      \lpoint{q} (q) at (.25cm, 0cm) {};
      \draw (u) -- (v);
      \begin{edges}
        \draw (u.center) -- (a.center) -- (b.center) -- cycle;
        \draw (v.center) -- (b.center) -- (c.center) -- cycle;
        \draw (u.center) .. controls (.5cm, -1.25cm) .. (c.center) -- (d.center) -- cycle;
        \draw (v.center) .. controls (-.5cm, -1.25cm) .. (a.center) -- (d.center) -- cycle;
        \draw[xedge] (a.center) -- (b.center) -- (c.center) -- (d.center) -- cycle;
        \draw[xedge] (u) -- (v);
        \draw[xedge] (u) -- (p1);
        \draw[xedge] (u) -- (p2);
        \draw[xedge] (v) -- (q);
      \end{edges}
      \begin{tris}
        \draw[triangle1] (u.center) -- (a.center) -- (b.center) -- cycle;
        \draw[triangle1] (v.center) -- (b.center) -- (c.center) -- cycle;
        \draw[triangle1] (u.center) .. controls (.5cm, -1.25cm) .. (c.center) -- (d.center) -- cycle;
        \draw[triangle1] (v.center) .. controls (-.5cm, -1.25cm) .. (a.center) -- (d.center) -- cycle;
      \end{tris}
    \end{tikzpicture}
    \caption{$\sey, X$ in Lemma~\ref{lem:6-perf-7}.}
    \label{fig:6-perf-7}
  \end{figure}%
  We quickly check Condition~(ii) of Definition~\ref{def:reducible}.
  Since $E(G[N(u) \cap N(v)]) \subset X$, any triangle of $G-X$
  containing a vertex of $\{u,v\}$ contains at most one vertex from $N(u)
  \cap N(v)$, and therefore contains two vertices from
  $\{u,v,p_1,p_2,q\}$. Let $T$ be a triangle of $G-X$ and suppose $u
  \in T$. Since $uv, up_1, up_2 \in X$ and $uq \notin E(G)$, we see
  that $T$ cannot contain two vertices of $\{u,v,p_1,p_2,q\}$, and so
  $G-X$ has no triangle containing $u$. Similar logic holds for $v$.
\end{proof}
We have now completed the proof of Lemma~\ref{lem:redlem}, giving
a list of configurations that cannot appear in a smallest counterexample
to Tuza's~Conjecture. By completing the proof of this lemma, we have
completed the proof of the main theorem, Theorem~\ref{thm:mainthm}.
\section{Acknowledgments}
The author thanks his advisor Douglas B.~West for many hours of
careful proofreading, and for helpful suggestions regarding the
design of the figures. The author also thanks the anonymous referees
for their detailed and helpful comments.

The author acknowledges support from the IC Postdoctoral Fellowship.
\bibliographystyle{elsarticle-num} \bibliography{redbib}

\begin{thebibliography}{10}
\expandafter\ifx\csname url\endcsname\relax
  \def\url#1{\texttt{#1}}\fi
\expandafter\ifx\csname urlprefix\endcsname\relax\def\urlprefix{URL }\fi
\expandafter\ifx\csname href\endcsname\relax
  \def\href#1#2{#2} \def\path#1{#1}\fi

\bibitem{TuzaProc}
{\relax Zs}.~Tuza, Finite and infinite sets. {V}ol.\ {I}, {II}, in: A.~Hajnal,
  L.~Lov{\'a}sz, V.~T. S{\'o}s (Eds.), Proceedings of the sixth {H}ungarian
  combinatorial colloquium held in {E}ger, {J}uly 6--11, 1981, Vol.~37 of
  Colloquia Mathematica Societatis J\'anos Bolyai, North-Holland Publishing
  Co., Amsterdam, 1984, p. 888.

\bibitem{Tuza}
{\relax Zs}.~Tuza, A conjecture on triangles of graphs, Graphs Combin. 6~(4)
  (1990) 373--380.
\newblock \href {http://dx.doi.org/10.1007/BF01787705}
  {\path{doi:10.1007/BF01787705}}.

\bibitem{Haxell}
P.~E. Haxell, Packing and covering triangles in graphs, Discrete Math.
  195~(1-3) (1999) 251--254.
\newblock \href {http://dx.doi.org/10.1016/S0012-365X(98)00183-6}
  {\path{doi:10.1016/S0012-365X(98)00183-6}}.

\bibitem{LBT}
{\relax Aparna Lakshmanan}.~S., {\relax Cs}.~Bujt{\'a}s, {\relax Zs}.~Tuza,
  Small edge sets meeting all triangles of a graph, Graphs Combin. 28~(3)
  (2012) 381--392.
\newblock \href {http://dx.doi.org/10.1007/s00373-011-1048-8}
  {\path{doi:10.1007/s00373-011-1048-8}}.

\bibitem{Krivelevich}
M.~Krivelevich, On a conjecture of {T}uza about packing and covering of
  triangles, Discrete Math. 142~(1-3) (1995) 281--286.
\newblock \href {http://dx.doi.org/10.1016/0012-365X(93)00228-W}
  {\path{doi:10.1016/0012-365X(93)00228-W}}.

\bibitem{SashaK4}
P.~Haxell, A.~Kostochka, S.~Thomass{\'e}, Packing and covering triangles in
  {$K_4$}-free planar graphs, Graphs Combin. 28~(5) (2012) 653--662.
\newblock \href {http://dx.doi.org/10.1007/s00373-011-1071-9}
  {\path{doi:10.1007/s00373-011-1071-9}}.

\bibitem{CDMMS}
G.~Chapuy, M.~DeVos, J.~McDonald, B.~Mohar, D.~Scheide, Packing triangles in
  weighted graphs, SIAM J. Discrete Math. 28~(1) (2014) 226--239.
\newblock \href {http://dx.doi.org/10.1137/100803869}
  {\path{doi:10.1137/100803869}}.

\bibitem{SashaStability}
P.~Haxell, A.~Kostochka, S.~Thomass{\'e}, A stability theorem on fractional
  covering of triangles by edges, European J. Combin. 33~(5) (2012) 799--806.
\newblock \href {http://dx.doi.org/10.1016/j.ejc.2011.09.024}
  {\path{doi:10.1016/j.ejc.2011.09.024}}.

\bibitem{HaxellRodl}
P.~E. Haxell, V.~R{\"o}dl, Integer and fractional packings in dense graphs,
  Combinatorica 21~(1) (2001) 13--38.
\newblock \href {http://dx.doi.org/10.1007/s004930170003}
  {\path{doi:10.1007/s004930170003}}.

\bibitem{Yuster}
R.~Yuster, Dense graphs with a large triangle cover have a large triangle
  packing, Combin. Probab. Comput. 21~(6) (2012) 952--962.

\bibitem{discharging}
D.~W. Cranston, D.~B. West, A guide to dischargingSubmitted.

\bibitem{Nauty}
B.~D. Mc{K}ay, A.~Piperno, \href{http://cs.anu.edu.au/~bdm/nauty/}{{Nauty
  User's Guide (Version 2.5)}} (2013).
\newline\urlprefix\url{http://cs.anu.edu.au/~bdm/nauty/}

\bibitem{Wagner}
K.~Wagner, \"{U}ber eine {E}igenschaft der ebenen {K}omplexe, Math. Ann.
  114~(1) (1937) 570--590.
\newblock \href {http://dx.doi.org/10.1007/BF01594196}
  {\path{doi:10.1007/BF01594196}}.

\bibitem{Thomas}
R.~Thomas, Recent excluded minor theorems for graphs, in: Surveys in
  combinatorics, 1999 ({C}anterbury), Vol. 267 of London Math. Soc. Lecture
  Note Ser., Cambridge Univ. Press, Cambridge, 1999, pp. 201--222.

\bibitem{diestel}
R.~Diestel, Graph theory, 4th Edition, Vol. 173 of Graduate Texts in
  Mathematics, Springer, Heidelberg, 2010.
\newblock \href {http://dx.doi.org/10.1007/978-3-642-14279-6}
  {\path{doi:10.1007/978-3-642-14279-6}}.

\bibitem{Mader}
W.~Mader, {$3n-5$} edges do force a subdivision of {$K_5$}, Combinatorica
  18~(4) (1998) 569--595.
\newblock \href {http://dx.doi.org/10.1007/s004930050041}
  {\path{doi:10.1007/s004930050041}}.

\bibitem{Deming}
R.~W. Deming, Independence numbers of graphs---an extension of the
  {K}oenig-{E}gervary theorem, Discrete Math. 27~(1) (1979) 23--33.
\newblock \href {http://dx.doi.org/10.1016/0012-365X(79)90066-9}
  {\path{doi:10.1016/0012-365X(79)90066-9}}.

\bibitem{Kayll}
P.~M. Kayll, K\"onig-{E}gerv\'ary graphs are non-{E}dmonds, Graphs Combin.
  26~(5) (2010) 721--726.
\newblock \href {http://dx.doi.org/10.1007/s00373-010-0940-y}
  {\path{doi:10.1007/s00373-010-0940-y}}.

\bibitem{egervary}
E.~Egerv\'ary, On combinatorial properties of matrices, Mat. Lapok 38 (1931)
  16--28, (Hungarian with German summary).

\bibitem{konig}
D.~K\"onig, Graphen und matrizen, Mat. Lapok 38 (1931) 116--119.

\bibitem{HIN}
W.~L. Hsu, Y.~Ikura, G.~L. Nemhauser, A polynomial algorithm for maximum
  weighted vertex packings on graphs without long odd cycles, Math. Programming
  20~(2) (1981) 225--232.
\newblock \href {http://dx.doi.org/10.1007/BF01589347}
  {\path{doi:10.1007/BF01589347}}.

\bibitem{LBT-perfect}
{\relax Aparna Lakshmanan}.~S., {\relax Cs}.~Bujt{\'a}s, {\relax Zs}.~Tuza,
  Induced cycles in triangle graphs (2014).
\newblock \href {http://arxiv.org/abs/1410.8807} {\path{arXiv:1410.8807}}.

\end{thebibliography}
\end{document}